\newtheorem{theorem}{Theorem}[section]
\newtheorem{definition}[theorem]{Definition}
\newtheorem{lemma}[theorem]{Lemma}
\newtheorem{proposition}[theorem]{Proposition}
\newtheorem{cor}[theorem]{Corollary}
\newtheorem{remark}[theorem]{Remark}
\newtheorem{question}[theorem]{Question}
\numberwithin{equation}{section}
\DeclareMathOperator{\sys}{sys}
\DeclareMathOperator{\inj}{inj}
\DeclareMathOperator{\Ext}{Ext}
\DeclareMathOperator{\Crit}{Crit}
\DeclareMathOperator{\arccosh}{arccosh}
\DeclareMathOperator{\arcsinh}{arcsinh}
\DeclareMathOperator{\diam}{diam}
\definecolor{mygreen}{RGB}{28,172,0} 
\definecolor{mylilas}{RGB}{170,55,241}
\begin{document}

\lstset{language=Matlab,
    breaklines=true,
    morekeywords={matlab2tikz},
    keywordstyle=\color{blue},
    morekeywords=[2]{1}, keywordstyle=[2]{\color{black}},
    identifierstyle=\color{black},
    stringstyle=\color{mylilas},
    commentstyle=\color{mygreen},
    showstringspaces=false,
    numbers=left,
    numberstyle={\tiny \color{black}},
    numbersep=9pt, 
	xleftmargin=1cm
}

\title{Shape of filling-systole subspace in surface moduli space and critical points of systole function}

\author{Yue Gao}
\address{BICMR, Peking University, Beijing 100871, CHINA}
\email{yue\_gao@pku.edu.cn}
\date{}

\maketitle

\begin{abstract}

		This paper studies the space $X_g\subset \mathcal{M}_g$ consisting of surfaces with filling systoles and its subset, critical points of the systole function. In the first part, we obtain a surface with Teichm\"uller distance $\frac{1}{5}\log\log g$ to $X_g$ and
		in the second and third part, prove that most points in $\mathcal{M}_g$ have Teichm\"uller distance $\frac{1}{5}\log\log g$ to $X_g$ and Weil-Petersson distance $0.6521(\sqrt{\log g}-\sqrt{7\log\log g})$ respectively. 
		Therefore we prove that the radius-$r$ neighborhood of $X_g$ is not able to cover the thick part of $\mathcal M_g$ for any fixed $r>0$. 
		In last two parts, we get critical points with small and large (comparable to diameter of thick part of $\mathcal M_g$) distance respectively. 
\end{abstract}

\tableofcontents
\section{Introduction}
\subsection{Motivations}
A long-standing and difficult question on the moduli space of Riemann surface (denoted $\mathcal M_g$) is to construct a spine of $\mathcal M_g$ (i.e. the deformation retract of $\mathcal M_g$ with the minimal dimension \footnote{In some paper, a deformation retract of $\mathcal M_g$ is called a spine of $\mathcal M_g$ and the ones with minimal dimension  are called minimal (or optimal) spine}. )
This question is equivalent to construct mapping class group equivariant deformation retract with minimal dimension of the Teichm\"uller space $\mathcal T_g$. In an unpublished manuscript \cite{thurston1998minimal}, Thurston proposed a candidate for the spine of $\mathcal M_g$ (see \cite{anderson2016relative}). This candidate consists of surfaces whose shortest geodesics are filling, denoted by $X_g$ (A finite set of essential curves on a surface is filling if the curves cut the surface into polygonal disks. ). Thurston outlined a proof that $X_g$ is a deformation retract of $\mathcal M_g$ but the proof seems difficult to complete. 
Recently, some progress to the dimension of $X_g$ has been made, for example, a codimenstion $2$ deformation retract of  $\mathcal M_g$ containing $X_g$ (Ji \cite{ji2014well}) and a $4g-5$-cell contained in $X_g$ (Fortier-Bourque \cite{fortier2019hyperbolic}), But determining the dimension of $X_g$ seems to be still very difficult. 

Our work mainly concerns the shape of $X_g$ with respect to the Teim\"uller metric and Weil-Petersson metric on $\mathcal M_g$. 
%As dimension of $X_g$ is expected small, we expect $X_g$ is a 'small' set in $\mathcal M_g$ with respect to these two metrics. 
Shape of $X_g$ was firstly studied by Anderson-Parlier-Pettet in \cite{anderson2016relative} and our work is partly inspired by the notion of sparseness of subsets in $\mathcal M_g$  they raised. 
Our question is 
\begin{question}
		Does there exist a number $R=R(g)>0$, such that for most points $p\in \mathcal M_g$, $d_{\mathcal T}(p,X_g)$ (or $d_{wp}(p,X_g)$ respectively) is larger than $R(g)$?
		%Let $R=R(g)>0$ be a positive number depends on the genus $g$. For most points $p\in \mathcal M_g$, do we have $d_{\mathcal T}(p,X_g)>R(g)$?
		%Is there a large $R=R(g)>0$, such that for most points $p\in \mathcal M_g$, $d_{\mathcal T}(p,X_g)>R(g)$?
		\label{ques_sparse}
\end{question}
In other words, is $X_g$ in some sense 'sparse' in $\mathcal M_g$? 

In this paper, the main tool to deal with this question is the random surface theory with respect to Weil-Petersson volume. 

Another motivation to study the shape of $X_g$ is to understand the shape of critical point set of the systole function. 
On each surface $p\in \mathcal M_g$, systole is the length of the shortest geodesics on $p$. Therefore systole can be treated as a funtion on $\mathcal M_g$. Akrout showed this function is a topological Morse function in \cite{akrout2003singularites}, hence systole function has regular and critical points. The critical point set of this function is denoted by $\Crit(\sys_g)$. By a lemma by Schmutz Schaller \cite[Corollary 20]{schaller1999systoles}, $\Crit(sys_g)\subset X_g$. 
Therefore conclusions on the shape of $X_g$ implies corollaries on the shape of $\Crit(\sys_g)$. On the other hand, a natural question is to compare the difference of shape between $X_g$ and $\Crit(\sys_g)$. This program is closely related to the Mirzakhani's question if long finger exists. Details is in the next subsection.

\subsection{Results and perspectives}
Our first result is the construction of an example of surface distant from $X_g$. 

		\setcounter{section}{3}
		\setcounter{theorem}{6}
\begin{theorem}
		When $g\ge 3$, there is a surface $R_g$, its distance to $X_g$ is at least $\frac{1}{4} \log\left( \log{g} -K  \right)$. 
		where $K=\log 12$. 
		\label{thm_hole}
\end{theorem}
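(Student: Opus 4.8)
The plan is to construct $R_g$ so that its systole is, in a robust way, realized only by curves that cannot fill, and then to transport this robustness to every nearby surface using the standard control of lengths of closed geodesics along Teichm\"uller geodesics. The cleanest form is to build $R_g$ with a single strictly shortest closed geodesic $\gamma$, of length $s$, such that every other simple closed geodesic has length at least $L:=\log g-\log 12=\log(g/12)$. Two facts then do the work. First, a single essential simple closed curve never fills a surface of genus $\ge 2$: its complement is either connected of genus $g-1$ with two holes, or two pieces of positive genus — never a union of disks. Second, Wolpert's inequality: if $d:=d_{\mathcal T}(R_g,Y)$ then $e^{-2d}\,\ell_{R_g}(\alpha)\le \ell_Y(\alpha)\le e^{2d}\,\ell_{R_g}(\alpha)$ for every closed curve $\alpha$.

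Granting the construction, I would finish as follows. For $Y$ with $d=d_{\mathcal T}(R_g,Y)$, Wolpert gives $\ell_Y(\gamma)\le e^{2d}s$ while $\ell_Y(\alpha)\ge e^{-2d}L$ for every $\alpha\neq\gamma$; hence if $e^{2d}s<e^{-2d}L$, i.e.\ if $d<\tfrac14\log(L/s)$, then $\gamma$ is the unique systole of $Y$, so the systole of $Y$ is a single curve, does not fill, and $Y\notin X_g$. Thus every point of $X_g$ is at distance at least $\tfrac14\log(L/s)$ from $R_g$, and since $s\le 1$ (in fact one will take $s$ much smaller, so that geodesics crossing $\gamma$ are long as well, which only increases the estimate) this is at least $\tfrac14\log L=\tfrac14\log(\log g-\log 12)$. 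For a bound of this order one may even replace the first fact by the softer statement that two geodesics of length $<2\arcsinh(1)$ are disjoint, so that it is enough to make $s$ small enough that $e^{2d}s<2\arcsinh(1)$, after which all systole curves of $Y$ are pairwise disjoint, hence non-filling; this variant asks almost nothing of the construction.

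The content is therefore in building $R_g$. I would take $\gamma$ non-separating (or separating), prescribe $\ell_{R_g}(\gamma)=s$ small, and choose the complement of $\gamma$ — a surface of genus $g-1$ with one or two boundary geodesics of length $s$ — to have interior systole at least $\log(g/12)$; this is possible since the maximal systole of a genus-$h$ surface grows like $\tfrac43\log h$, so one uses a near-extremal surface (an arithmetic Buser--Sarnak type surface, or an explicit gluing of pairs of pants), the constant $12$ recording the precise systole estimate invoked (it lies below the Bavard ceiling $\approx\log(12g)$). This controls every geodesic disjoint from $\gamma$. Geodesics crossing $\gamma$ must traverse its embedded collar of half-width $\arcsinh(1/\sinh(s/2))$, a bound insensitive to the gluing twist, so for $s$ small enough they too have length $\ge\log(g/12)$, compatibly with the choice made for the complement. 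Then $\gamma$ is the unique systole of $R_g$ with length gap $\ge\log(g/12)$, and the previous paragraph applies. (If instead one insists that $R_g$ be uniformly thick, a short $\gamma$ is unavailable; one would then confine all systole-length geodesics of $R_g$ to a region of bounded diameter inside a surface of diameter of order $\log g$ and transport that confinement using that an $e^{2d}$-quasiconformal map is a quasi-isometry of controlled constants on the thick part — which yields the same $\log\log g$ order.)

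The hard part will be this construction: exhibiting a single hyperbolic structure that makes $\gamma$ short, keeps every geodesic meeting $\gamma$ long through its collar, and at the same time forces the complementary subsurface to be near-extremal for the systole \emph{while carrying a short boundary}, with the constants bookkept so that the threshold comes out as exactly $\log g-\log 12$. The length-transport step and the non-filling facts are then routine. (For small $g$ the asserted bound is non-positive and hence vacuous, so only large $g$ is at stake.)
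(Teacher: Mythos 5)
Your route is genuinely different from the paper's, and the part of it that actually closes is the ``soft variant.'' Since systolic curves are simple and two simple closed geodesics of length $<2\arcsinh 1$ are disjoint, $X_g$ lies in the $2\arcsinh 1$-thick part; combined with Wolpert's inequality $\ell_Y(\alpha)\le e^{2d_{\mathcal T}(R_g,Y)}\ell_{R_g}(\alpha)$, any surface carrying one closed geodesic of length $s\le 2\arcsinh(1)\,(\log g-\log 12)^{-1/2}$ is at distance at least $\tfrac12\log(2\arcsinh(1)/s)\ge\tfrac14\log(\log g-\log 12)$ from $X_g$, and such a surface trivially exists. So the literal statement does follow from your argument (vacuously for small $g$, as you note). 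By contrast, your main line rests on a construction you yourself flag as ``the hard part'' and do not prove: a genus $g$ surface with a unique systole of length $s$ and every other closed geodesic of length $\ge\log(g/12)$, equivalently a genus $g-1$ bordered surface with prescribed short boundary and near--Buser--Sarnak interior systole. That is a genuine gap if it is meant to be the proof, and it is also unnecessary: since your final bound is $\tfrac14\log(L/s)$, you can trade $L$ down to a constant by taking $s\asymp 1/\log g$ (crossing curves are then long by the collar lemma, and a pants decomposition with one short cuff and all other lengths bounded below does the rest), or simply invoke the soft variant.

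The comparison with the paper is worth spelling out. The paper's $S_g$ is deliberately \emph{thick}: it has constant systole $\arccosh 2$, being built by gluing isometric pairs of pants along a trivalent tree with one-holed tori at the leaves, and the lower bound comes from Lemma \ref{lem_dist_x} --- a two-sided Thurston-metric estimate against the filling systole set of a point of $X_g$ --- together with the observation that any filling family on $S_g$ must contain a curve running from the central pants to some leaf torus, hence of length at least $n\arccosh 2\gtrsim\log g$. Your thin example proves the stated inequality more cheaply, but it only exploits the fact that $X_g$ avoids the thin part, so it says nothing about the distance from the thick part to $X_g$ (which is what the paper's example is for, and what Lemma \ref{lem_dist_x} is reused for in Theorem \ref{thm_random}); your parenthetical sketch of a thick alternative via quasi-isometry of the thick part is not an argument as it stands. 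In short: correct as a proof of the literal statement via your soft variant, a gap in the main line as written, and in either case a strictly weaker kind of example than the one the paper constructs.
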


Before stating Theorem \ref{thm_random}, we make ``most points'' in Question \ref{ques_sparse} precise. 

The Weil-Petersson metric is a mapping class group equivariant Riemannian metric on the Teichm\"uller space, Therefore the volume of $\mathcal M_g$ and Borel subsets of $\mathcal M_g$ with respect to this metric is well defined. Mirzakhani invented the integration formula for geometric functions on $\mathcal M_g$ with respect to this volume in \cite{mirzakhani2007simple} and then calculate the volume of $\mathcal M_g$, she initiated a fast-growing area: random surface with respect to Weil-Petersson metric in \cite{mirzakhani2007simple} \cite{mirzakhani2013growth}. 

The random surface theory is based on the probability of Borel sets in $\mathcal M_g$. Mirzakhani defined the probability of a Borel set $B\subset \mathcal M_g$ as 
\[
		P_{WP}(B) = \frac{vol_{WP}(B)}{vol_{WP}(\mathcal M_g)}.
\]
Now we are ready to state Theorem \ref{thm_random}. 
		\setcounter{section}{4}
		\setcounter{theorem}{2}
\begin{theorem}
		When $g$ is sufficiently large, for the probability of a point $S$ in $\mathcal M_g$ whose Teichm\"uller distance to $X_g$ is smaller than $\frac{1}{5}\log \log g$ 
		\[
		P_{WP}\left( S|d_{\mathcal T}(S,X_g)<\frac{1}{5}\log\log g \right) \to 0		\]
		as $g\to \infty$. 

		\label{thm_random}
\end{theorem}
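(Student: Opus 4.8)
The plan is to reduce the statement to a first-moment estimate for short closed geodesics on a random surface. Write $\lambda(S)$ for the smallest $L$ such that the closed geodesics on $S$ of length at most $L$ fill $S$; then $S\in X_g$ exactly when $\lambda(S)=\sys(S)$. The first step is to show that
\[
d_{\mathcal T}(S,X_g)<r\ \Longrightarrow\ \lambda(S)<e^{4r}\sys(S).
\]
Pick $Y\in X_g$ with $d_{\mathcal T}(S,Y)<r$ and let $\alpha_1,\dots,\alpha_k$ be the systolic curves of $Y$. They fill $Y$, and since filling is a property of the isotopy class of the multicurve (and geodesics are in minimal position) their geodesic representatives fill $S$ as well. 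By Wolpert's inequality $\ell_S(\alpha_i)<e^{2r}\ell_Y(\alpha_i)=e^{2r}\sys(Y)$, and applying the same inequality to a systole-realizing curve of $S$ gives $\sys(Y)<e^{2r}\sys(S)$; hence $S$ has a filling family of geodesics each of length $<e^{4r}\sys(S)$, i.e. $\lambda(S)<e^{4r}\sys(S)$.

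The second step disposes of the systole. By Mirzakhani's integration formula the expected number of primitive closed geodesics of length at most $t$ is bounded below by $\int_0^t\frac{e^u+e^{-u}-2}{2u}\,du-o(1)$, which goes to infinity with $t$; a second-moment (Poisson-type) estimate then gives $P_{WP}(\sys(S)>c_0)\to 0$ whenever $c_0=c_0(g)\to\infty$ slowly, e.g. $c_0=\log\log g$. Combining the two steps, it suffices to prove $P_{WP}(\lambda(S)\le L_g)\to 0$ with $L_g:=e^{4r}c_0=(\log g)^{4/5}\log\log g$, taking $r=\tfrac15\log\log g$.

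The core of the argument is to show that, with probability tending to $1$, every primitive closed geodesic on $S$ of length at most $L_g$ is simple and any two of them are disjoint. Granting this, the geodesics of length $\le L_g$ form a disjoint union of simple closed curves, and such a family never fills a closed surface of genus $g\ge2$ (cutting preserves Euler characteristic, so not every complementary region can be a disk); hence $\lambda(S)>L_g$, which is what we want. To prove the claim I would bound, via Mirzakhani's integration formula, the expected number of (a) self-intersecting primitive closed geodesics of length $\le L_g$ and (b) pairs of distinct primitive closed geodesics of length $\le L_g$ with nonzero geometric intersection. Summing over the finitely many topological types of the regular neighbourhood of such a configuration, each contribution has the form $\frac{1}{|\mathrm{Sym}|}\int f\cdot\frac{V_{g',n'}(\cdot)}{V_g}$ with $g'<g$; bounding the volume polynomials $V_{g',n'}$ by Mirzakhani's $\sinh$ estimates and the ratios $V_{g',n'}/V_g$ by the Mirzakhani--Zograf asymptotics (which give a factor $O(1/g)$ whenever a handle is removed), each such expectation is $e^{O(L_g)}/g$, hence $o(1)$ since $L_g=o(\log g)$; Markov's inequality then finishes the claim. (Alternatively one can bound the expected total number of primitive geodesics of length $\le L_g$ by $\lesssim e^{L_g}/L_g=o(\sqrt g)$ and use that a filling geodesic family must have $\gtrsim\sqrt g/L_g$ distinct members, after conditioning on the mild w.h.p. lower bound $\sys(S)>1/\log g$ needed for the self-intersection count.)

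The main obstacle I expect is making the geodesic counting uniform as $L_g\to\infty$: Mirzakhani--Petri is stated for fixed length, so one must control (or circumvent) the error terms in the volume recursion and in the sum over configuration types so that the bounds survive for $L_g$ of size a fractional power of $\log g$. A secondary point is the bookkeeping in the first step — passing to geodesic representatives that need not be in general position and pinning down the precise constant in the length comparison — but this only influences the numerical value of $\tfrac15$, where there is considerable slack.
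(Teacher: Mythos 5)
Your argument is correct in outline, but it takes a genuinely different route from the paper at the key step. Your step 1 is essentially the paper's Lemma \ref{lem_dist_x}: the paper proves the same bound $d_{\mathcal T}(S,X_g)\ge \frac14\log\frac{L}{\sys(S)}$ by playing the two directions of Thurston's Lipschitz metric against each other (via Theorem \ref{thm_thurston} and the inequality $d_{\mathcal T}\ge\frac12 d_L$), while you transport the filling systoles of a nearby $Y\in X_g$ and use Wolpert's quasiconformal length-distortion bound $e^{2d_{\mathcal T}}$ in both directions; the constants agree. Your step 2 (systole $\le \log\log g$ w.h.p.) is exactly the paper's use of Mirzakhani--Petri, which it cites as Theorem \ref{thm_mp} rather than re-deriving. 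The real divergence is the geometric input forcing filling families to contain a long curve: the paper quotes Nie--Wu--Xue (Theorem \ref{thm_nwx}), which supplies w.h.p. a geodesic with an embedded half collar of width about $\frac12\log g$, so any filling family must contain a curve of length about $\log g$; you instead argue that w.h.p. all primitive geodesics of length $\le L_g=(\log g)^{4/5}\log\log g$ are simple and pairwise disjoint (or simply too few in number), hence cannot fill, via first-moment bounds from Mirzakhani's integration formula and Mirzakhani--Zograf volume ratios. Both routes work; the paper's buys a much larger length threshold ($\approx\log g$ rather than $L_g$, hence slack up to any constant below $\frac14$) and outsources the hard uniform estimates to a citation, whereas your route is more self-contained in spirit but places the entire burden on counting estimates that must be made uniform for $L_g\to\infty$ -- this is exactly the obstacle you flag, it is not covered by Mirzakhani--Petri as stated, and carrying it out (summing over configuration types with uniform control of $V_{g',n'}(x)/V_g$) is comparable in difficulty to the collar result the paper invokes. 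As written your threshold $L_g=e^{4r}c_0$ meets the $\frac15\log\log g$ bound only with equality at the edge, so you should either take $c_0$ slightly smaller or note the strictness of the inequalities; this is cosmetic, since your method would equally allow $L_g$ up to a small multiple of $\log g$.
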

		\setcounter{section}{1}
		\setcounter{theorem}{1}

\begin{remark}
		The distance $\frac{1}{5} \log \log g$ is calculated from (\ref{for_dist_x}) in Lemma \ref{lem_dist_x} and the width in Nie-Wu-Xue's \cite[Theorem 2]{nie2020large}. Actually if we replace $\frac{1}{5}$ by any number smaller than $\frac{1}{4}$, this theorem still holds. Besides Lemma \ref{lem_dist_x} and \cite[Theorem 2]{nie2020large}, Theorem \ref{thm_random} also depends on Mirzakhani-Petri's \cite[Theorem 2.8]{mirzakhani2019lengths}. 
\end{remark}

		\setcounter{section}{4}
		\setcounter{theorem}{3}
Theorem \ref{thm_random} gives a positive answer to Question \ref{ques_sparse} with respect to Teichm\"uller distance. When $g$ is sufficiently large, most points in $\mathcal M_g$ has Teichm\"uller distance at least $\frac{1}{5}\log \log g$ to $X_g$. 
%Hence $X_g$ is sparse in $\mathcal M_g$. 

The moduli space $\mathcal M_g$ is divided into two parts. The thick part consists of surfaces with systole larger or equal to $\varepsilon$ for some fixed $\varepsilon>0$, denoted by $\mathcal M_g^{\ge\varepsilon}$. This part is compact in $\mathcal M_g$ and its diameter with respect to Teichm\"uller metric is $C\log \frac{g}{\varepsilon}$ for some $C>0$. 
The complementary part of the thick part is the thin part. 

By collar lemma (see for example \cite[Chapter 4]{buser2010geometry}), $X_g$ is contained in the thick part of $\mathcal M_g$ and we have
\begin{cor}

		When $g\to \infty$, 
		\[
				\frac{P_{WP}(S|d_{\mathcal T}(S,X_g)<\frac{1}{5}\log\log g ) }{P_{WP}(S\in \mathcal M_g^{\ge\varepsilon})}\to 0. 
\]
		%When $g$ is sufficiently large, in the thick part of $\mathcal M_g$, a generic point (with respect to the Weil Petersson volume) has distance at least $\frac{1}{5}\log \log g$ to $X_g$. 
		\label{cor_thick}
\end{cor}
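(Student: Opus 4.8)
The corollary is essentially Theorem~\ref{thm_random} divided by a quantity that must be shown to stay bounded away from $0$. Note first that dividing by a number in $[0,1]$ can only enlarge the quotient, so quoting Theorem~\ref{thm_random} by itself is not enough: one has to prove that $P_{WP}(S\in\mathcal M_g^{\ge\varepsilon})$ does not degenerate to $0$ as $g\to\infty$. The whole content of the proof is therefore a uniform (in $g$) lower bound for the Weil--Petersson probability of the $\varepsilon$-thick part.

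\textbf{Step 1: a $g$-independent lower bound for the thick part.} I would invoke the limiting law for short geodesics on a Weil--Petersson random surface. By Mirzakhani--Petri \cite[Theorem 2.8]{mirzakhani2019lengths} (compare Mirzakhani \cite{mirzakhani2013growth}), for fixed $\varepsilon>0$ the number of primitive closed geodesics of length at most $\varepsilon$ on a random $S\in\mathcal M_g$ converges in distribution, as $g\to\infty$, to a Poisson random variable with finite mean
\[
\lambda(\varepsilon)=\int_0^\varepsilon\frac{e^t+e^{-t}-2}{2t}\,dt<\infty .
\]
Since a non-primitive short geodesic sits over a primitive one of even smaller length, $\sys(S)\ge\varepsilon$ is the event that this count is zero, hence
\[
P_{WP}\bigl(S\in\mathcal M_g^{\ge\varepsilon}\bigr)=P_{WP}\bigl(\sys(S)\ge\varepsilon\bigr)\longrightarrow e^{-\lambda(\varepsilon)}>0 .
\]
In particular there are $g_0$ and $c=c(\varepsilon)>0$ such that $P_{WP}(S\in\mathcal M_g^{\ge\varepsilon})\ge c$ for all $g\ge g_0$.

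\textbf{Step 2: conclusion.} For $g\ge g_0$ one then has
\[
\frac{P_{WP}\bigl(S\mid d_{\mathcal T}(S,X_g)<\tfrac15\log\log g\bigr)}{P_{WP}(S\in\mathcal M_g^{\ge\varepsilon})}\ \le\ \frac1c\,P_{WP}\bigl(S\mid d_{\mathcal T}(S,X_g)<\tfrac15\log\log g\bigr),
\]
and the right-hand side tends to $0$ by Theorem~\ref{thm_random}; this gives the claim. (Consistently with the collar-lemma remark before the statement, one in fact has $X_g\subset\mathcal M_g^{\ge\varepsilon}$ once $\varepsilon<2\arcsinh 1$, so for large $g$ the numerator is an event lying inside the thick part, but this inclusion is not needed for the estimate.) The only genuine obstacle is Step~1: one must make sure the cited Poisson convergence is applied so as to yield a strictly positive, $g$-independent lower bound for the thick-part probability; once that is in hand the corollary follows by a single division.
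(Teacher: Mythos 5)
Your proof is correct and follows essentially the same route as the paper: the numerator tends to $0$ by Theorem~\ref{thm_random}, and the denominator is bounded below by a positive constant independent of $g$, which the paper obtains by citing \cite[Theorem 4.1]{mirzakhani2019lengths} --- precisely the Mirzakhani--Petri Poisson-limit statement you unpack in your Step~1.
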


		\setcounter{section}{1}
		\setcounter{theorem}{2}

From Theorem \ref{thm_hole} or Corollary \ref{cor_thick}, the Hausdorff distance between thick part of $\mathcal M_g$ and $X_g$ is at least $\frac{1}{5}\log \log g$. 

The study on the shape of $X_g$ with respect to Teichm\"uller metric started from the paper \cite{anderson2016relative} by Anderson Parlier and Pettet. 
By comparing $X_g$ with $Y_g$, 
%a subset of $\mathcal M_g$ with nice combinatorial structures, 
the subset of $\mathcal M_g$ with Bers' constant bounded above and below by constants, 
they obtained the following two results: (1) the diameter of $X_g$ is comparable with the thick part of  $\mathcal M_g$ \cite[Theorem 1.1]{anderson2016relative}; (2) the sparseness of $X_g\cap Y_g$ in $Y_g$: most points (according to a measure other than the Weil-Petersson volume defined on $Y_g$) in $Y_g$ has distance $\log g$ to $X_g\cap Y_g$ with respect to the metric induced by infimum length of path fully contained in $Y_g$ \cite[Theorem 1.3]{anderson2016relative}. 

The distance in our Theorem \ref{thm_hole} and \ref{thm_random} is smaller than the distance in \cite[Theorem 1.3]{anderson2016relative}, 
but we remove the restriction to $Y_g$ and obtain the sparseness of $X_g$ in $\mathcal M_g$ and thick part of $\mathcal M_g$. 

An immediate corollary to Theorem \ref{thm_hole} or Corollary \ref{cor_thick} is 

\begin{cor}
		For any $R>0$, when $g$ is sufficiently large, the $R$-neighborhood of $X_g$ does not cover the thick part of $\mathcal M_g$. Hence the $R$-neighborhood of $\Crit(\sys_g)$ does not cover the thick part of $\mathcal M_g$. 
		\label{cor_cover}
\end{cor}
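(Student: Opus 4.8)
The plan is to deduce this corollary formally from Corollary~\ref{cor_thick} (equivalently, from the Hausdorff-distance remark immediately preceding it), together with the inclusion $\Crit(\sys_g)\subset X_g$ quoted from Schmutz Schaller \cite[Corollary 20]{schaller1999systoles}. Throughout, for $A\subset\mathcal M_g$ write $N_R(A)$ for the radius-$R$ neighborhood of $A$ in the Teichm\"uller metric, and fix a thickness constant $\varepsilon>0$ once and for all, so that $\mathcal M_g^{\ge\varepsilon}$ has positive Weil-Petersson volume (hence $P_{WP}(\mathcal M_g^{\ge\varepsilon})>0$) for every $g$.

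First I would fix $R>0$ and choose $g_0=g_0(R)$ so large that $\tfrac{1}{5}\log\log g>R$ for all $g\ge g_0$; this is possible since $\tfrac15\log\log g\to\infty$. Then $N_R(X_g)\subset\{S:d_{\mathcal T}(S,X_g)<\tfrac15\log\log g\}$, so it suffices to show the latter, larger set does not contain $\mathcal M_g^{\ge\varepsilon}$ when $g$ is large. Suppose for contradiction that $\mathcal M_g^{\ge\varepsilon}\subset\{S:d_{\mathcal T}(S,X_g)<\tfrac15\log\log g\}$. Then
\[
P_{WP}\!\left(S\mid d_{\mathcal T}(S,X_g)<\tfrac{1}{5}\log\log g\right)\ \ge\ P_{WP}\!\left(S\in\mathcal M_g^{\ge\varepsilon}\right),
\]
so the ratio appearing in Corollary~\ref{cor_thick} is $\ge 1$ for all such $g$, contradicting the fact that this ratio tends to $0$. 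Hence for all sufficiently large $g$ there is a point $S\in\mathcal M_g^{\ge\varepsilon}$ with $d_{\mathcal T}(S,X_g)\ge\tfrac15\log\log g>R$, i.e. a point of the thick part outside $N_R(X_g)$. (Alternatively, one can invoke the Remark preceding Corollary~\ref{cor_cover}: since $X_g\subset\mathcal M_g^{\ge\varepsilon}$ by the collar lemma, the Hausdorff distance being at least $\tfrac15\log\log g$ forces $\sup_{S\in\mathcal M_g^{\ge\varepsilon}}d_{\mathcal T}(S,X_g)\ge\tfrac15\log\log g$, and this supremum is attained by compactness of $\mathcal M_g^{\ge\varepsilon}$ and continuity of $d_{\mathcal T}(\cdot,X_g)$; or one can simply take the surface $R_g$ of Theorem~\ref{thm_hole}, which lies in the thick part.) In every case $N_R(X_g)$ does not cover $\mathcal M_g^{\ge\varepsilon}$.

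Finally, for the statement about $\Crit(\sys_g)$: since $\Crit(\sys_g)\subset X_g$, we have $N_R(\Crit(\sys_g))\subset N_R(X_g)$, so the very same point of the thick part constructed above lies outside $N_R(\Crit(\sys_g))$, and that neighborhood fails to cover $\mathcal M_g^{\ge\varepsilon}$ as well. I do not expect any genuine obstacle here: all of the mathematical content is already carried by Theorem~\ref{thm_random}/Corollary~\ref{cor_thick} (or by Theorem~\ref{thm_hole}), and the only point requiring care is the order of quantifiers — $R$ is fixed first, and then $g$ is taken large depending on $R$ — together with the elementary observation that a set which covers a subset of strictly larger Weil-Petersson measure-ratio (or of larger Hausdorff distance to $X_g$) cannot exist.
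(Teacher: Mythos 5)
Your argument is correct and is exactly the route the paper intends: the corollary is stated as an immediate consequence of Theorem \ref{thm_hole} or Corollary \ref{cor_thick} together with $\Crit(\sys_g)\subset X_g$, and you have simply made the quantifier bookkeeping (fix $R$, then take $g$ large so that $\frac15\log\log g>R$ and the probability ratio drops below $1$) explicit. No gap; the compactness/attainment aside is unnecessary but harmless.
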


For the thick part of $\mathcal M_g$, Fletcher Kahn and Markovic determined the minimal size of point set in $\mathcal M_g^{\ge\varepsilon}$, whose $R$ neighborhood covers the whole thick part for any $R>0$ \cite{fletcher2013moduli}. The size is $(Cg)^{2g}$ for $C=C(\varepsilon,R)>0$. Currtently the size of $\Crit(\sys_g)$ is not determined yet, but a known lower bound for $|\Crit(\sys_g)|$ given by Euler characteristic of $\mathcal M_g$ \cite{harer1986euler} is quite close to this number. However, by Corollary \ref{cor_thick} and \ref{cor_cover}, $\Crit(\sys_g)$ is sparse in $\mathcal M_g^{\ge\varepsilon}$. 

We also answer Question \ref{ques_sparse} with respect to Weil-Petersson metric
\setcounter{section}{5}
\setcounter{theorem}{6}

 \begin{theorem}
		 For the probability of a point in $\mathcal M_g$, whose Weil-Petersson distance to $X_g$ is smaller than $0.6521 ( \sqrt{\log g} - \sqrt{7\log\log g})$, we have
		\[
				P_{WP}\left(S|d_{wp}(S,X_g)<0.6521 \left( \sqrt{\log g} - \sqrt{7\log\log g}\right)\right) \to 0
		\]
		as $g\to \infty$. 
		 \label{thm_random_wp}
 \end{theorem}

\setcounter{section}{1}
\setcounter{theorem}{3}

Besides the tools used in the proof of Theorem \ref{thm_random}, to prove this theorem, we also use Wu's estimate of lower bounds of Weil-Petersson distance in \cite{wu2020new}. In \cite[Theorem 1.4]{wu2020new}, using this estimate, Wu has obtained that probability of Weil-Petersson $\sqrt{\log g}$-neighborhood of all surfaces with $o(\log g)$ Bers' constant tends to $0$ as $g$ tends to infinity. 

After answering Question \ref{ques_sparse}, a further question is that 
\begin{question}
		Is there  a critical point $p\in \Crit(\sys_g)$ and a large number $R(g)$ such that $B(p,R(g))$ contains no critical point except $p$? 
		\label{ques_finger}
\end{question}

This question is about the difference between $\Crit(\sys_g)$ in $X_g$. The radius gives a lower bound for the Hausdorff distance between $X_g$ and $\Crit(\sys_g)$. Moreover, Question \ref{ques_finger} is very close to but slightly weaker than Mirzakhani's question if there exists a long finger (see \cite{fortier2020local}) when the point $p$ is a local maximal point with large systole. 

For the local maximal point of the systole function $p$, a component of the level set $\left\{ q|\sys(q)>L \right\}$ that contains $p$ but does not contain any other critical point of the systole function is called a finger. The length of a finger is $\sys(p)-L$. If a finger is long, then the Teichm\"uller distance from $p$ to other critical point is large (at least $\frac{1}{2}\log\left( \sys(p)/L \right)$). 

We make the first attempt to compare the difference between $X_g$ and $\Crit(\sys_g)$. 

For any $g\ge2$, we take three surfaces $S_g^1$, $S_g^2$ and $S_g^3$ from \cite{anderson2011small} \cite{bai2019maximal} and \cite{fortier2020local} respectively. The surfaces $S_g^1$ and $S_g^3$ are known critical points and we prove $S_g^2$ is a critical point by our Proposition \ref{prop_euc}. Then we calculate the distance between the critical points. 
\setcounter{section}{8}
\setcounter{theorem}{2}

		\begin{theorem}
				For the surfaces $S_g^1,S_g^3\in \Crit(\sys_g)$, when $g\ge 13$, 
				\[
						d_{\mathcal{T}}(S_g^1,S_g^3) > \frac{1}{2} \log(g-6) -K,
				\]
				where $K=\frac{1}{2}\log \left( \frac{40}{3}\log\left( \frac{4g+4}{\pi} \right) \right)$. 
				\label{thm_dist_large}
		\end{theorem}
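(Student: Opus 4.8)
The plan is to obtain the Teichm\"uller distance $d_{\mathcal{T}}(S_g^1,S_g^3)$ from a single well-chosen geometric functional that behaves very differently on the two surfaces, rather than comparing systoles (which are of the same order $\sim \tfrac{4}{3}\log g$ on both families and would only give a doubly-logarithmic gap). The natural candidate is the \emph{systole} itself versus a \emph{second shortest independent length}, or better, a length-type invariant whose value is $\Theta(g)$ on one family and $O(\log g)$ on the other. Concretely, $S_g^1$ is taken from Anderson--Larsen--... (the congruence/arithmetic-type construction in \cite{anderson2011small}), which has many short geodesics and small diameter--type data, while $S_g^3$ from \cite{fortier2020local} is a surface built to be a local maximum of systole with a controlled systolic structure; one of them carries a separating or non-separating curve (or a pants-type quantity) that is forced to be very short, of length $O(1/g)$ or $O(\log g)$, whereas on the other surface the corresponding extremal length / shortest-curve-in-a-homotopy-class quantity is bounded below by a positive constant or grows. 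Call this functional $\ell(\cdot)$. Since $d_{\mathcal{T}}(X,Y)\ge \tfrac12\log \tfrac{\ell(X)}{\ell(Y)}$ whenever $\ell$ is (the log of) an extremal length or a hyperbolic length of a fixed topological type compared across the two surfaces via an appropriate marking, the ratio $\ell(S_g^1)/\ell(S_g^3)$ being of order $g$ (up to a $\log\log g$-type correction coming from the explicit collar/area estimates) yields the stated bound $\tfrac12\log(g-6) - K$ with $K = \tfrac12\log\!\left(\tfrac{40}{3}\log\!\left(\tfrac{4g+4}{\pi}\right)\right)$; the precise shape of $K$ strongly suggests that the denominator in the key ratio is an area-bound $\tfrac{4}{3}\log\tfrac{4g+4}{\pi}$ (a hyperbolic-triangle / Gauss--Bonnet bound of the form ``injectivity radius $\le \arccosh(\text{area}/\text{const})$''), and the $\tfrac{40}{3}$ absorbs a small constant factor, while the $g-6$ reflects an Euler-characteristic bookkeeping from the explicit cell structure of $S_g^1$ or $S_g^3$.

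First I would fix the two explicit surfaces and extract from \cite{anderson2011small}, \cite{fortier2020local} their hyperbolic data: the systole value, the number and disposition of systolic geodesics, and — crucially — an upper bound for the \emph{diameter} of $S_g^i$ or, equivalently, a lower bound for the length of the shortest closed geodesic in a prescribed free-homotopy class after suitably marking the surfaces. The key inequality I would use is the classical one: for any two points $X,Y$ in Teichm\"uller space and any simple closed curve (or measured lamination) $\alpha$,
\[
d_{\mathcal{T}}(X,Y)\ \ge\ \tfrac12\,\log\frac{\ell_\alpha(X)}{\ell_\alpha(Y)} ,
\]
(and symmetrically), where $\ell_\alpha$ denotes hyperbolic length; applying this to a curve $\alpha$ that is systolic on one surface and long (comparable to the diameter, hence $\gtrsim \log g$ up to explicit constants coming from the area) on the other gives the bound after taking the better of the two directions. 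The explicit constant $\tfrac{40}{3}\log\tfrac{4g+4}{\pi}$ should emerge from: (i) the area of a genus-$g$ hyperbolic surface is $4\pi(g-1)$, so $\tfrac{4g+4}{\pi}$ is a rescaled area; (ii) a shortest geodesic arc across an embedded disk of that area has length at most $\sim \arccosh(\text{area})\sim \log(\text{area})$; (iii) a universal multiplicative slack $\tfrac{40}{3}$ and an additive $g \mapsto g-6$ from counting the handles ``used up'' in the explicit construction. I would then verify that all inequalities are genuinely valid for $g\ge 13$ (this threshold presumably makes $g-6\ge 7$ and makes the logarithms positive and the constant comparisons go through).

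The main obstacle, and the step requiring the most care, will be establishing the \emph{lower} bound on the relevant length on whichever of $S_g^1,S_g^3$ is the ``spread out'' surface — i.e., proving that no closed geodesic in the designated homotopy class can be short there. For $S_g^3$, which is engineered to be a local systole maximum, this amounts to showing that the $4g+o(g)$-ish systolic geodesics of comparable length force a lower bound on \emph{all} short-ish curves, or equivalently an upper bound on the diameter; this is where \cite{fortier2020local}'s combinatorial description of the surface and a collar-lemma packing argument (short curves have wide collars, and too many short curves cannot be embedded in area $4\pi(g-1)$) enter, and where the $\tfrac{40}{3}$-type constant is pinned down. One subtlety is that $d_{\mathcal{T}}$ compares the two surfaces only after a choice of homeomorphism/marking, so I must exhibit an explicit curve (or a topological type) whose realizations on $S_g^1$ and $S_g^3$ can be matched — most cleanly by choosing $\alpha$ to be a systole of the ``tight'' surface and bounding its length on the ``spread'' surface from below by the spread surface's systole lower bound, which is marking-independent up to the mapping class group and therefore legitimately yields a lower bound for $d_{\mathcal{T}}$ by taking the infimum over markings. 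Handling this infimum-over-markings correctly, and making sure the resulting bound is $\tfrac12\log(g-6)-K$ and not merely $\tfrac12\log\log(g-6)-K$, is the crux: it forces the ``tight'' surface to have systole (or the chosen curve length) of order $g$, \emph{not} order $\log g$ — so I would choose $\alpha$ to be not a systole but a curve of length $\Theta(g)$ on $S_g^1$ (for instance a curve that must cross $\Theta(g)$ collars, or simply use $\diam(S_g^1)\gtrsim \log g$... ) — on reflection the only way to get a genuine $\log g$ (not $\log\log g$) gap is that \emph{one} of the two surfaces has an essential curve whose hyperbolic length is of polynomial order in $g$ while on the other it stays $O(\log g)$; I would therefore use that $S_g^1$ (the small-systole arithmetic-type example) contains a curve of length $O(1)$ whose geometric intersection number with the systole is $\Theta(g)$, so that on $S_g^3$, where the systole is $\gtrsim \tfrac{4}{3}\log g$, that same curve is forced by the collar lemma to have length $\gtrsim g\cdot(\text{const})$, and plug these two values into the displayed inequality.
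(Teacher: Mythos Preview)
Your proposal has a genuine gap, and it stems from incorrect factual premises about the two surfaces. Neither $S_g^1$ nor $S_g^3$ has systole of order $\log g$: the surface $S_g^1$ (from \cite{anderson2011small}) is built by gluing four regular right-angled $(2g+2)$-gons and has systole $4\arcsinh\sqrt{\cos\tfrac{\pi}{g+1}}\to 4\arcsinh 1\approx 3.53$, a bounded constant; the surface $S_g^3$ (from \cite{fortier2020local}) is a cyclic chain of $g-1$ isometric four-holed spheres with cuff length $\approx 6.98$, so its systole is also bounded. Your final mechanism --- a curve of length $O(1)$ on $S_g^1$ intersecting $\Theta(g)$ systoles and hence forced to be long on $S_g^3$ by the collar lemma --- cannot work as stated, because the collar width on $S_g^3$ is bounded, and more fundamentally because the intersection-number data is marking-dependent, exactly the issue you raised and did not resolve. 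Taking the infimum over markings in moduli space kills any argument that relies on matching a specific topological curve between the two surfaces.

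The paper's argument sidesteps all of this with a single marking-independent invariant: the \emph{diameter}. Since $S_g^3$ is a chain of $g-1$ four-holed spheres, any path between distant pieces must cross $\Theta(g)$ seams of fixed length, giving $\diam(S_g^3)\ge 0.6\lfloor\tfrac{g-5}{2}\rfloor$. Since $S_g^1$ is covered by four regular $(2g+2)$-gons of circumradius $\arccosh\cot\tfrac{\pi}{2g+2}$, one gets $\diam(S_g^1)<4\log\tfrac{4g+4}{\pi}$. For \emph{any} Lipschitz homeomorphism $f:S_g^1\to S_g^3$ (regardless of homotopy class), picking $x,y\in S_g^3$ realizing the diameter gives $L(f)\ge d(x,y)/d(f^{-1}(x),f^{-1}(y))\ge \tfrac{3(g-6)}{40\log((4g+4)/\pi)}$, whence $d_L(S_g^1,S_g^3)\ge\log$ of this ratio, and then $d_{\mathcal T}\ge\tfrac12 d_L$ by (\ref{for_thurston_metric}). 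The constants $\tfrac{40}{3}$ and $g-6$ that you were trying to reverse-engineer from area/Gauss--Bonnet are simply the outcome of these two diameter bounds; no collar lemma or packing enters.
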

\setcounter{section}{1}
\setcounter{theorem}{3}
		Hence diameter of $\Crit(\sys_g)$ is comparable with the diameter of $X_g$ and diameter of thick part of $\mathcal M_g$. 

		On the other hand, the distance between $S_g^1,S_g^2$ is small. 
\setcounter{section}{7}
\setcounter{theorem}{7}
		\begin{theorem}
				For any $g\ge2$, $S_g^1,S_g^2\in \Crit(\sys_g)$, 
				\[
						d_{\mathcal T}(S_g^1,S_g^2)\le 2.3. 
				\]
				\label{thm_dist_small}
		\end{theorem}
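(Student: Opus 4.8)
The plan is to exhibit an explicit path in Teichm\"uller space (equivalently in moduli space) joining $S_g^1$ and $S_g^2$ and to bound its Teichm\"uller length by $2.3$. Since both surfaces come from concrete constructions in \cite{anderson2011small} and \cite{bai2019maximal}, the natural first step is to record explicit Fenchel--Nielsen (or, better, explicit flat/hyperbolic polygonal) coordinates for each. Both $S_g^1$ and $S_g^2$ are built as highly symmetric surfaces --- they are quotients or gluings with large isometry groups and their systoles are realized by many curves --- so I expect their fundamental domains to be governed by a single real parameter (an edge length or a twist). The key observation I would try to establish is that $S_g^1$ and $S_g^2$ are in fact the \emph{same topological gluing pattern} with different values of that parameter (or differ by one elementary move, such as changing a single pair of pants or a single twist), so that a one-parameter family $S_t$, $t\in[0,1]$, with $S_0=S_g^1$, $S_1=S_g^2$, connects them.

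The main computation is then to bound $d_{\mathcal T}(S_g^1,S_g^2)\le \int_0^1 \|\dot S_t\|_{\mathcal T}\,dt$. For this I would use the standard fact that moving a single Fenchel--Nielsen length coordinate $\ell$ (keeping everything else fixed) at unit speed in $\log\ell$ contributes at most $\tfrac12|\Delta\log\ell|$ to Teichm\"uller distance, while twisting contributes a term controlled by the relevant collar width; more robustly, one can bound the Teichm\"uller norm of the deformation by the $L^\infty$-norm of the associated Beltrami differential, estimated from the explicit hyperbolic structure. Because the parameter change between $S_g^1$ and $S_g^2$ should be of bounded size (their systoles are comparable constants, not growing with $g$), each elementary contribution is $O(1)$ \emph{uniformly in $g$}, and summing the finitely many elementary moves gives an explicit constant. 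The number $2.3$ then comes from plugging the exact coordinate values into these elementary estimates; I would keep the arithmetic tight by using $\operatorname{arccosh}$-type identities for the relevant hyperbolic hexagons rather than crude bounds.

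The hard part will be making the comparison between the two constructions precise: \cite{anderson2011small} and \cite{bai2019maximal} describe $S_g^1$ and $S_g^2$ in different languages (one as a small-systole / highly-symmetric example, the other via a maximal-systole-type extremal construction), so identifying a common pants decomposition or a common flat model --- and checking that the isometry classes really are joined by a short, explicit deformation rather than merely ``close in spirit'' --- is where the genuine work lies. A secondary technical point is that the chosen path must stay in the region where the length functions used as coordinates remain the systole-realizing curves (or at least where the collar estimates apply), so that the Teichm\"uller-norm bound along the path is valid; this should follow from the collar lemma once the parameter interval is seen to be short. Once the common model and the short deformation are in hand, the length estimate is a finite, explicit computation yielding the stated bound $d_{\mathcal T}(S_g^1,S_g^2)\le 2.3$.
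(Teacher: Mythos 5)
Your overall strategy is the paper's: both surfaces do sit in a common two--parameter family $S_g(c,t)$ (with $S_g^1=S_g(c_1,0)$ and $S_g^2=S_g(c_2,t_2)$), and the paper likewise connects them through the intermediate surface $S_g(c_2,0)$, bounding the two legs separately and uniformly in $g$. The genuine gap is in the quantitative ingredients you invoke. The ``standard fact'' that changing a single Fenchel--Nielsen length coordinate costs at most $\tfrac12|\Delta\log\ell|$ in Teichm\"uller distance is not a theorem: Wolpert/Thurston-type inequalities of that shape are \emph{lower} bounds on distance in terms of length ratios, and upper bounds on $d_{\mathcal T}$ from a change of one FN coordinate require real work (in general they fail without controlling all extremal lengths). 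The paper supplies exactly this missing work for the $c$-leg: it uses the $\bar G$-invariance of the Jenkins--Strebel differential determined by the cuffs $\{\gamma_k\}$ to show that $\{S_g(c,0)\}_{c>0}$ is literally a Teichm\"uller geodesic, so the distance is $\tfrac12|\log(\Ext_{\gamma}(R')/\Ext_{\gamma}(R))|$ for the characteristic ring domains, and then converts extremal length to hyperbolic length via Maskit's theorem. Note that this produces $\tfrac12\log\frac{\pi c_2}{2\theta c_1}\le 0.65$, not $\tfrac12\log(c_2/c_1)$: the Maskit factor $\pi/(2\theta)$ is not negligible, so your heuristic would not even give the right constant.

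Similarly, ``twisting contributes a term controlled by the collar width'' must be made into an actual map: the paper constructs an explicit homeomorphism $h:S_g(c_2,0)\to S_g(c_2,t_2)$ that is an isometry outside collars of the $\gamma_k$ and realizes the twist inside each collar, lifts the collar to $\mathbb H^2$, writes $h(re^{i\varphi})=r\Phi(\varphi)e^{i\varphi}$, and computes the dilatation $K(h)$ from $\Phi$ and the collar width (determined by the seam length via the trirectangle formula), obtaining $d_{\mathcal T}(S_g(c_2,0),S_g^2)\le\tfrac12\log K(h)\le 1.6450$. Your proposal does not indicate how either explicit bound would be produced, and without them neither the uniformity in $g$ nor the specific constant $0.65+1.645\le 2.3$ is justified; as written, the central estimates are asserted rather than proved.
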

\setcounter{section}{1}
\setcounter{theorem}{3}

		It is worth to mention that to prove the surface $\Sigma_g^2$ is a critical point, we use a conclusion (Proposition \ref{prop_euc}) that among all surface with a specific symmetry, the surface with maximal systole is a critical point. This proposition is a generalization to \cite[Theorem 37]{schaller1999systoles} and \cite[Proposition 6.3]{fortier2019hyperbolic}. The key point of this generalization is to construct a domain in $\mathcal M_g$ containing the point $p$ we consider, and $p$ is the maximal point of the systole function in the domain.

		\subsection{Methods}
		The distance between a surface $S\in\mathcal M_g$ and $X_g$ is bounded from below by Lemma \ref{lem_dist_x}: If for every filling curve set $F$ in which each pair of curves intersect at most once, $F$ contains a curve longer than $L$, then 
		\[
				d_{\mathcal T} (S,X_g)\ge \frac{1}{4}\log\frac{L}{\sys(S)}.
		\]

		We outline the construction of surface $S_g$ in Theorem \ref{thm_hole}. Our construction depends on the following observation: on a surface with a pants decomposition, a filling curve set must pass every pair of pants, otherwise the curve set is not filling. 

		The surface $S_g$ consists of isometric pairs of pants. There is a very special pair of pants $P$ in $S_g$ such that every non-separating simple closed curve passes $P$ has length at least $\log g$ while the systole of $S_g$ is a constant. 

		We take a trivalent tree $T$ (every vertex has degree $3$ except the leaves) with $n$ vertices and diameter comparable to $\log n$. The surface $S_g$ is obtained by replacing trivalent vertices of $T$ by a pair of pants and leaves of $T$ by one-holed tori. Thus the genus of $S_g$ 
		equals number of leaves of $T$ 
		and therefore is comparable to $n$. 
		By this construction, any non-separating curve in $S_g$ must pass some of the one-holed tori, otherwise the curve is contained in a $g$-holed sphere and thus separating. 

		For the tree $T$ with diameter $\log n$, there is a vertex  $v\in T$ such that the distance from $v$ to any leaf is comparable to $\log n$. Therefore 
		any non-separating curve passing the pants corresponding to $v$ has length comparable to 
		$\log n$ (or $\log g$),  
		while the systole of $S_g$ is a constant. 
		By Lemma \ref{lem_dist_x}, lower bound of $d_{\mathcal T} (S_g,X_g)$ is comparable with $\log\log g$. 

		The proof of Theorem \ref{thm_random} is by Lemma \ref{lem_dist_x} Nie-Wu-Xue's \cite[Theorem 2]{nie2020large} and Mirzakhani-Petri's \cite[Theorem 2.8]{mirzakhani2019lengths}. We outline the proof here and detail calculation is in Section \ref{sec_random}. 

		%By \cite[Theorem 2.8]{mirzakhani2019lengths}, the probability of $\left\{ S\in \mathcal M_g| sys(S)> \log\log g \right\}$ tends to $0$ as $g$ tends to infinity. 
		%To deal with surfaces with systole smaller than $\frac{1}{5}\log \log g$, by \cite[Theorem 2]{nie2020large}, in $\mathcal M_g$, most surfaces have a separating curve, which has a half collar with $\frac{1}{2}\log g-\log\log g$ width. Therefore, on most surfaces, a filling set of curves must contain a curve with length comparable to $\log g$. Therefore, by Lemma \ref{lem_dist_x}, when systole is smaller than $\frac{1}{5}\log \log g$, most points has distance at least
		%\[
		%		\frac{1}{4}\log \frac{\log g-2\log\log g}{\frac{1}{5}\log\log g}
		%\]
		%to $X_g$. 
		By \cite[Theorem 2]{nie2020large}, most surfaces in $\mathcal M_g$ contains a closed geodesic with a half collar of $\frac{1}{2}\log g-\log\log g$ width. This means any filling curve set on these surfaces contains a curve longer than the width. On the other hand, by \cite[Theorem 2.8]{mirzakhani2019lengths}, most surfaces have a systole shorter than $\log\log g$. Then by Lemma \ref{lem_dist_x}, Teichm\"uller distance between most points in $\mathcal M_g$ and $X_g$ is bounded from below. 

		For Theorem \ref{thm_random_wp}, the Weil-Petersson distance version, we observe that most surfaces contain a point with large injective radius (Lemma \ref{lem_inj_nwx}), while the corresponding point on surfaces in $X_g$ has an injective radius bounded from above by systole of the surface (Lemma \ref{lem_inj_fill}). Then by Wu's lower bound of Weil-Petersson distance by injective radius and systole (\cite[Theorem 1.1 and Corollary 1.2]{wu2020new}), and an argument similar to our Lemma \ref{lem_dist_x}, we proved Theorem \ref{thm_random_wp}. 

		Theorem \ref{thm_dist_large} is obtained by comparing the diameter of the two surfaces. This method is from \cite[Lemma 5.1]{rafi2013diameter}. 

		The shape of $S_g^1$ and $S_g^2$ is similar. Then we can construct the deformation from $S_g^1$ to $S_g^2$ explicitly. From the deformation we described in Section \ref{sec_small}, we calculate the distance and get Theorem \ref{thm_dist_small}.

		\subsection{Organization} In Section \ref{sec_pre}, we provide some priliminary knowledge on Teichm\"uller theory and the systole. Then we prove Theorem \ref{thm_hole} in Section \ref{sec_hole} and Theorem \ref{thm_random} in Section \ref{sec_random}. On the Weil-Petersson distance, we prove Theorem \ref{thm_random_wp} in \ref{sec_random_wp}. In Section \ref{sec_crit}, Proposition \ref{prop_euc} is proved. Then using Proposition \ref{prop_euc}, Theorem \ref{thm_dist_small}  is proved in Section \ref{sec_small}. Finally, Theorem \ref{thm_dist_large} is proved in Section \ref{sec_large}. 

		{\bf Acknowledgement: } We acknowledge Prof. Yi Liu for many helpful discussions, comments suggestions, help and mentoring. We acknowledge Prof. Shicheng Wang for helpful discussion on Remark \ref{rem_conj} and helpful suggestions. We acknowledge Prof. Yunhui Wu and Yang Shen for suggesting me considering the Weil-Petersson sistance version of Theorem \ref{thm_random}
		and acknowledge Prof. Yunhui Wu for many helpful discussion and comments on Theorem \ref{thm_random_wp}. We acknowledge Prof. Jiajun Wang for helpful suggestions. 

 \section{Preliminaries}
 \label{sec_pre}

 \subsection{Teichm\"uller space}

We denote by $\mathcal{T}_g$  the Teichm\"uller space consisting of marked hyperbolic surface with genus $g$, and by $\mathcal{M}_g$ the moduli space consisting of hyperbolic surface with genus $g$. It is known that 
\[
		\mathcal{M}_g \cong \mathcal{T}_g/\Gamma_g.
\] Here $\Gamma_g$ is the mapping class group. 

Teichm\"uller metric is a complete mapping class group equivariant metric on $\mathcal T_g$ defined by dilatation of quasi-conformal map between points in $\mathcal T_g$. For $X,Y \in\mathcal T_g$, distance between $X$ and $Y$ is denoted by $d_{\mathcal T}(X,Y)$. Formal definition of this metric is given in 
Actually in most part of this paper, formal definition of this metric is not needed. 

\subsection{Thurston's metric}
In \cite{thurston1998minimal}, Thurston defined an asymmetric metric on the Teichm\"uller space. 
For $X, Y\in \mathcal{T}_g$ and $f:X\to Y$ a Lipschitz homeomorphism between $X$ and $Y$, we let 
\[
		L(f) = \sup_{x,y\in X x\ne y}\frac{d(f(x),f(y))}{d(x,y)}.
\]
Then this metric is defined as
\[
		d_L(X,Y) = \inf_{f} \left\{ \log L(f)| f:X\to Y\text{ is a Lipschitz homeomorphism} \right\}. 
\]

Thurston has proved that
\begin{theorem}[\cite{thurston1998minimal}]
		For $X,Y\in \mathcal{M}_g$
		\[
				d_{L}(X,Y) = \sup_{\alpha \text{ s.c.c. in }X} \inf_{f:X\to Y \text{Lipschitz homeomorphism} } \log \frac{l_{f(\alpha)(Y)}}{l_{\alpha}(X)}.
		\]
		\label{thm_thurston}
\end{theorem}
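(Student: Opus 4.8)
The identity is Thurston's theorem from \cite{thurston1998minimal}, and I would reproduce its two–sided structure. After fixing a marking identifying $X$ and $Y$, every Lipschitz homeomorphism $f\colon X\to Y$ sends a simple closed geodesic $\alpha\subset X$ to a loop in one fixed free homotopy class, so $\inf_f l_{f(\alpha)}(Y)=l_{[\alpha]}(Y)$ is the length of the $Y$–geodesic in that class and the right–hand side of the displayed formula equals $\log K(X,Y)$, where
\[
K(X,Y):=\sup_{\alpha}\frac{l_{\alpha}(Y)}{l_{\alpha}(X)}
\]
with $\alpha$ ranging over simple closed geodesics on $X$. Thus the assertion is $d_L(X,Y)=\log K(X,Y)$, and it splits into two inequalities.

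\emph{Lower bound $d_L(X,Y)\ge\log K(X,Y)$.} If $f\colon X\to Y$ is Lipschitz with constant $L(f)$ and $\alpha$ is a simple closed geodesic on $X$, then $f(\alpha)$ is a loop of length at most $L(f)\,l_{\alpha}(X)$ in its homotopy class, hence $l_{[f(\alpha)]}(Y)\le L(f)\,l_{\alpha}(X)$ and $\log L(f)\ge\log\frac{l_{f(\alpha)}(Y)}{l_{\alpha}(X)}\ge\inf_{f'}\log\frac{l_{f'(\alpha)}(Y)}{l_{\alpha}(X)}$. As this holds for every $\alpha$ we get $\log L(f)\ge\sup_{\alpha}\inf_{f'}\log\frac{l_{f'(\alpha)}(Y)}{l_{\alpha}(X)}$; now pass to the infimum over $f$.

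\emph{Upper bound $d_L(X,Y)\le\log K(X,Y)$.} This is the substantive half. By the homogeneity $l_{t\nu}=t\,l_{\nu}$ and density of weighted simple closed curves, the ratio $\nu\mapsto l_{\nu}(Y)/l_{\nu}(X)$ extends continuously to the space of measured laminations and descends to the compact projective space $\mathcal{PML}$; so $K(X,Y)$ is \emph{attained}, on a measured lamination whose support is a geodesic lamination $\mu=\mu(X,Y)\subset X$ --- the ``maximally stretched'' lamination. One then constructs a $K(X,Y)$–Lipschitz homeomorphism $f\colon X\to Y$: the local model near $\mu$ is Thurston's \emph{stretch map} on the ideal hyperbolic triangles complementary to $\mu$, which is $K$–Lipschitz and multiplies arc length along $\mu$ (and along the horocyclic foliation) by exactly $K$; these assemble into a map carrying a neighbourhood of $\mu$ onto a neighbourhood of the $Y$–realisation of $\mu$, stretching $\mu$ by exactly $K(X,Y)$, and over the finitely many pieces of $X\setminus\mu$ one extends by an interpolation chosen so that the global Lipschitz constant is not increased. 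The resulting $f$ has $L(f)\le K(X,Y)$, so $d_L(X,Y)\le\log K(X,Y)$; together with the previous paragraph this proves the theorem.

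\textbf{Main obstacle.} Everything nontrivial is in the upper bound: realising the supremal stretch factor $K(X,Y)$ by an honest homeomorphism. This relies on Thurston's stretch–map analysis of ideal triangles, the identification and structure theory of the maximally stretched lamination $\mu(X,Y)$, and the delicate interpolation over $X\setminus\mu$ keeping the Lipschitz constant at $K(X,Y)$; by comparison the lower bound and the passage from curves to laminations are routine. For the purposes of this paper only the lower bound is needed, and the full statement can simply be cited from \cite{thurston1998minimal}.
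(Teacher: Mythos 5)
The paper does not prove this statement at all: it is quoted verbatim from Thurston's manuscript \cite{thurston1998minimal} and used as a black box (indeed, in Lemma \ref{lem_dist_x} and Theorem \ref{thm_dist_large} only the easy inequality and the existence of an optimal Lipschitz map are invoked). Your outline is faithful to Thurston's actual argument: the lower bound $d_L\ge\log K$ is the elementary observation that a Lipschitz map shrinks no homotopy class by less than $1/L(f)$, and the upper bound is the real theorem. Be aware, though, that as a standalone proof your upper-bound paragraph is only a gesture at the content of \cite{thurston1998minimal}: the attainment of $K(X,Y)$ on a measured lamination via compactness of $\mathcal{PML}$ is fine, but the existence and structure of the maximally stretched (chain-recurrent) lamination, the fact that stretch maps on the complementary ideal triangles glue across $\mu$, and above all the extension over $X\setminus\mu$ ``without increasing the Lipschitz constant'' are precisely where all the work lies --- Thurston does not get the optimal map from a single stretch map in general, but from a limiting/concatenation argument along stretch paths. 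So your write-up is a correct roadmap rather than a proof; since you end by saying the statement should simply be cited, your treatment coincides with the paper's, and the one caveat worth adding is that the formula as displayed (with $X,Y\in\mathcal M_g$ and the infimum over $f$ placed inside the supremum) should be read, as you do, as fixing the homotopy data so that the inner infimum is just the geodesic length of the image class; in moduli space one must additionally minimize over mapping classes.
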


For $X,Y\in \mathcal{T}_g$, Rafi and Tao \cite[Equation (2)]{rafi2013diameter} have shown that 
\begin{equation}
		\frac{1}{2}d_L(X,Y)\le d_{\mathcal{T}}(X,Y). 
		\label{for_thurston_metric}
\end{equation}

 \subsection{Topological Morse function and generalized systole}

 \begin{definition}
		 On a topological manifold $M^n$, a function $f:M^n\to \mathbb{R}$ is a topological Morse function if at each point $p\in M$, there is a neighborhood $U$ of $p$ and a map $\psi: U\to \mathbb{R}^n$. Here $\psi$ is a homeomorphism between $U$ and its image, such that $f\circ \psi^{-1}$ is either a linear function or \[
				 f\circ \psi^{-1}((x_1,x_2,\dots,x_n)) = f(p) -x_1^2--x_1^2-\dots-x_j^2+x_{j+1}^2+\dots+x_n^2.
		 \]
		 In the former case, the point $p$ is called a regular point of $f$, while in the latter case the point $p$ is called a singular point with index $j$. 
 \end{definition}

 On a Riemannian manifold $M$, $l_\alpha: M\to \mathbb{R}^+$ is a family of smooth function on $M$ indexed by $\alpha\in I$, called {\em (generalized) length function}. The length function family is required to satisfy the following condition: 
 for every $ p \in M$, there exists a neighborhood $U$ of $p$ and a number $K>0$, such that the set $\{ \alpha| l_\alpha(q)\le K, \forall q\in U\}$ is a non-empty finite set. 
 The {\em (generalized) systole function} is defined as 
 \[
		 \sys(p) := \inf_{\alpha\in I} l_\alpha(p), \forall p\in M.
 \]

 Akrout proved that 
 \begin{theorem}[\cite{akrout2003singularites}]
		 If for any $ \alpha \in I$, the Hessian of $l_\alpha$ is positively definite, then the generalized systole function is a topological Morse function. 
		 \label{thm_akrout}
 \end{theorem}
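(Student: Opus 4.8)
The plan is to argue entirely locally and to reduce everything to a combinatorial condition on the gradients of the \emph{active} length functions. Fix a point $p$ of the ambient manifold $M$, write $n=\dim M$, and choose a chart sending a neighborhood of $p$ to a ball about $0\in\mathbb R^n$. First I would use the local finiteness hypothesis to reduce to a minimum of finitely many smooth functions: after shrinking the neighborhood, $\sys=\min_{1\le i\le k}l_{\alpha_i}$ for some finite set $\alpha_1,\dots,\alpha_k$, and then, since the $l_{\alpha_i}$ that are not minimal at $p$ stay strictly above $\sys(p)$ nearby while $\sys$ is continuous, in fact $\sys$ agrees near $p$ with $\min_{i\in S(p)}l_{\alpha_i}$, where $S(p)=\{\,i:l_{\alpha_i}(p)=\sys(p)\,\}$; each $l_{\alpha_i}$ is strictly convex near $p$ because its Hessian is positive definite. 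Writing $g_i:=\nabla l_{\alpha_i}(p)$, the dichotomy I would prove is: $p$ is a regular point unless $0$ is a strictly positive convex combination of $\{g_i:i\in S(p)\}$ (equivalently, $0$ lies in the relative interior of their convex hull) — the \emph{eutactic} case — in which case $p$ is a singular point of index $j:=\dim\operatorname{span}\{g_i:i\in S(p)\}$. The whole argument can be organized as an induction on $n$.

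Suppose first $p$ is not eutactic. If moreover $0\notin\operatorname{conv}\{g_i\}$, the separation theorem gives $v$ with $\langle g_i,v\rangle<0$ for every $i\in S(p)$; by continuity of the gradients this persists, on a smaller neighborhood, for every relevant index, so $t\mapsto\sys(q+tv)$ is strictly decreasing for all nearby $q$, being a minimum of strictly decreasing smooth functions. Then the map $\psi(x)=\bigl(\sys(x)-\sys(p),\,\mathrm{pr}_{v^{\perp}}(x)\bigr)$, with $\mathrm{pr}_{v^{\perp}}$ the projection along $v$, restricts on each $v$-line to a homeomorphism onto an interval, hence is a homeomorphism onto its image and conjugates $\sys$ to an affine function of the first coordinate, so $p$ is regular. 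In the remaining sub-case $0\in\partial\operatorname{conv}\{g_i\}$ there is a supporting functional $\nu$ with $\langle g_i,\nu\rangle\le 0$ for all $i$, strict for some; $\sys$ is then strictly monotone along the $\nu$-line through $p$, and restricting to the hyperplane $\nu^{\perp}$ lowers $n$ by one while keeping the configuration non-eutactic, so the inductive hypothesis supplies the local model there, which reassembles to show $p$ is regular.

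Now assume $p$ is eutactic, so $0=\sum_{i\in S(p)}c_i g_i$ with all $c_i>0$ and $\sum c_i=1$; in particular every $g_i$ lies in $W:=\operatorname{span}\{g_i\}$, and $j=\dim W$. Split $\mathbb R^n=W\oplus W^{\perp}$ and write $x=(u,w)$ near $0$. Two elementary observations organize the picture: for $0\ne u\in W$ one has $\min_i\langle g_i,u\rangle<0$ — if it were $\ge 0$ then $\langle g_i,u\rangle\ge 0$ for all $i$, and $0=\sum c_i\langle g_i,u\rangle$ with $c_i>0$ forces all these to vanish, whence $u\perp W$, a contradiction — so by homogeneity and compactness $\min_i\langle g_i,u\rangle\le -c_0|u|$ for some $c_0>0$; and for $w\in W^{\perp}$ every $g_i$ annihilates $w$, so the first-order term drops out of $l_{\alpha_i}(p+w)$ and positive definiteness of the (finitely many) Hessians gives $l_{\alpha_i}(p+w)\ge\sys(p)+c_1|w|^2+o(|w|^2)$ with $c_1>0$. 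Taylor expanding $\min_i l_{\alpha_i}$ along mixed directions then yields $\sys(p+(u,w))\le\sys(p)-c_0|u|+O(|x|^2)$ and $\sys(p+(0,w))\ge\sys(p)+c_1|w|^2+o(|w|^2)$: $\sys$ has a strict topological maximum along the $j$-dimensional slice $W$ and a strict minimum along the transverse slice $W^{\perp}$, which is exactly the sign pattern of $\sys(p)-x_1^2-\dots-x_j^2+x_{j+1}^2+\dots+x_n^2$.

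Upgrading this sign pattern to an actual topological conjugacy is the step I expect to be the main obstacle, since $\sys$ is only piecewise smooth. For each small $w\in W^{\perp}$ the restriction $u\mapsto\sys(p+(u,w))$ attains a maximum value $\mu(w)$ on a small ball in $W$: its superlevel sets in $W$ are small perturbations of the bounded convex polytopes $\{\,u\in W:\langle g_i,u\rangle\ge -c\ \forall i\,\}$ (bounded precisely because $0$ is interior to the convex hull of the $g_i$ within $W$), hence a nested family of topological $j$-balls collapsing to a point, and $\mu(w)-\sys(p)$ is squeezed between $c_1|w|^2$ and $C|w|^2$. One then defines the ``$-$'' coordinates by radializing these superlevel sets so that $\{u:\sys(p+(u,w))\ge c\}$ maps to a round $j$-ball of radius $\sqrt{\mu(w)-c}$, suitably oriented, and the ``$+$'' coordinates by a parallel radialization of the level sets of $\mu$ over $W^{\perp}$; concatenating produces a chart $\psi$ conjugating $\sys$ to the quadratic normal form. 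The delicate point — as opposed to the linear-algebra bookkeeping above — is checking that these radializations can be chosen to vary continuously with $w$ and to fit together across the ``ridges'' where the minimizing index changes, so that the pieces assemble into a single homeomorphism; this is exactly where positive definiteness of each individual Hessian, and not merely the eutaxy condition on the gradients, is indispensable, as it is what prevents the level sets from degenerating in the transverse directions and keeps $\mu$ controlled. Granting that, every point of $M$ is either regular or a topological Morse singularity, so $\sys$ is a topological Morse function.
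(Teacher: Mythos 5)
The paper itself offers no proof of this statement: Theorem 2.4 is quoted directly from Akrout \cite{akrout2003singularites}, so the only meaningful comparison is with the known proof there, whose broad architecture (localize to a finite minimum, dichotomy eutactic/non-eutactic, index equal to the rank of $\{\mathrm{d}l_{\alpha}\}_{\alpha\in S(p)}$) your proposal correctly reproduces. But as a proof it has a genuine gap exactly where the theorem lives: at a eutactic point you never construct the conjugating homeomorphism, you only describe a ``radialization'' and then say ``granting that''. The estimates $\sys(p+(u,w))\le\sys(p)-c_0|u|+O(|x|^2)$ and $\sys(p+(0,w))\ge\sys(p)+c_1|w|^2+o(|w|^2)$ give a sign pattern along the two complementary slices $W$ and $W^{\perp}$, and such a pattern does not determine the local topological type of a function that is merely a minimum of smooth functions; moreover the $O(|x|^2)$ term can dominate $-c_0|u|$ in the mixed regime $|u|\lesssim|w|^2$, so even the claimed strict maximum of $u\mapsto\sys(p+(u,w))$ for fixed small $w$ is not established by what is written. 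To make the radialization work you must show that the superlevel sets $\{u:\sys(p+(u,w))\ge c\}$ form, uniformly and continuously in $(w,c)$, a nested family of topological $j$-balls that trivialize as a family across the walls where the minimizing index set changes; proving this (using convexity of each $l_{\alpha_i}$, not just eutaxy of the gradients) is the technical core of Akrout's argument, and it is precisely the part the proposal defers.

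The regular-point case is also incomplete in its harder sub-case. When $0$ lies on the relative boundary of $\operatorname{conv}\{g_i\}$, the active functions with $\langle g_i,\nu\rangle=0$ have vanishing first-order variation along $\nu$, so strict monotonicity of $\sys$ along $\nu$-lines through nearby points (which is what the first sub-case actually uses to build $\psi$) is no longer automatic; and the proposed fix --- restrict to $\nu^{\perp}$, apply induction on $n$, and ``reassemble'' --- is not justified: the restriction of $\sys$ to the slices $q+\nu^{\perp}$ is not visibly a lower-dimensional instance of the same hypotheses, and passing from a normal form on each slice to a normal form on the $n$-dimensional neighborhood requires a product or fibration structure in the $\nu$-direction that you have not produced. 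So only the easy separation sub-case ($0\notin\operatorname{conv}\{g_i\}$) is actually proved; the semi-eutactic sub-case and the eutactic case remain open in the proposal.
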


 The critical point of the systole function is also characterized in \cite{akrout2003singularites}. A $p\in M$ is an eutactic point if and only if it is a critical point of the systole function. 

 We assume that for $ p \in M$, 
 \[
		 S(p) := \left\{ \alpha\in I| l_\alpha(p) = \sys(p) \right\}
 \]

 \begin{definition}
	For $p\in M$, $p$ is eutactic if and only if $0$ is contained in the interior of the convex hull of $\left\{ \mathrm d l_{\alpha}|_p|\alpha\in S(p) \right\}$. 
 \end{definition}

 An equivalent definition is: 
 \begin{definition}
		 $p\in M$ is eutactic if and only for $v\in T_{p}M$, if $\mathrm d l_{\alpha}(v)\ge 0$ for all $\alpha \in S(p)$, then $\mathrm d l_{\alpha}(v)= 0$ for all $\alpha \in S(p)$. 
		 \label{defi_eut}
 \end{definition}

 \subsection{Teichm\"uller space and length function}

For a marked hyperbolic surface $\Sigma$ in the Teichm\"uller space $\mathcal{T}_g$, $\alpha\subset \Sigma$ is a essential simple closed geodesic. Its length is denoted by $l_\alpha(\Sigma)$. In another point of view, $l_\alpha$ is a function on $\mathcal{T}_g$:
\begin{eqnarray*}
		l_\alpha: \mathcal{T}_g &\to & \mathbb{R}^+\\
		\Sigma & \mapsto &l_\alpha(\Sigma).
\end{eqnarray*}

The set of all the shortest geodesics on $\Sigma$ is denoted by $S(\Sigma)$. For $\alpha\in S(\Sigma)$, 
\[
		l_\alpha(\Sigma)\le l_\beta(\Sigma), \forall \text{ simple closed geodesic } \beta\subset \Sigma. 
\]
The length of the shortest geodesics of $\Sigma$ is called {\em systole} of $\Sigma$. 

 Similarly, the systole can be treated as a function on $\mathcal{T}_g$, we denote it as $\sys_g$ or shortly $\sys$. Obviously 
 \[
		 \sys(\Sigma) = l_\alpha(\Sigma) = \inf_{\forall\text{ simple closed geodesic }\beta \subset \Sigma }  l_\beta(\Sigma). 
 \]

 \begin{remark}
		 In a small neighborhood $U$ of $\Sigma$ in $\mathcal{T}_g$, the systole function is realized by the minimum of the lengths of finitely many simple closed geosedsics. 
 \end{remark}

 \begin{remark}
		 Systole function can be also defined as a function on $\mathcal{M}_g$. 
\begin{eqnarray*}
		\sys: \mathcal{M}_g &\to & \mathbb{R}^+\\
		\Sigma & \mapsto &\sys(\Sigma).
\end{eqnarray*}
		 However, the length function $l_\alpha$ is not well-defined on $\mathcal{M}_g$ because of the monodromy.
 \end{remark}

 By \cite{wolpert1987geodesic}, the Hessian of $l_\alpha$ is always positively definite, for any simple closed geodesic $\alpha \subset \Sigma$ with respect to the Weil-Petersson metric. Therefore, we have 
 \begin{cor}[\cite{akrout2003singularites}, Corollary]
		 The systole function is a topological Morse function on $\mathcal{T}_g$.  
 \end{cor}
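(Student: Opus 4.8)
The plan is to deduce this corollary directly from Akrout's Theorem~\ref{thm_akrout} applied to the family of simple-closed-geodesic length functions $\{l_\alpha\}$ on $M=\mathcal T_g$ equipped with the Weil--Petersson metric. Akrout's theorem has two hypotheses to check: first, that $\{l_\alpha\}$ is a legitimate generalized length function family, i.e.\ that for each $p$ there is a neighborhood $U$ and a constant $K>0$ for which $\{\alpha:\ l_\alpha(q)\le K\ \forall q\in U\}$ is a non-empty finite set; and second, that each $l_\alpha$ has positive-definite Hessian. The second point is immediate from Wolpert's convexity result \cite{wolpert1987geodesic}, which is exactly the statement quoted just before the corollary: for every essential simple closed geodesic $\alpha$, the Hessian of $l_\alpha$ with respect to the Weil--Petersson metric is positive definite at every point of $\mathcal T_g$.

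For the first point I would argue as follows. Fix $p=\Sigma\in\mathcal T_g$ and pick $K$ larger than the length of some fixed pants decomposition of $\Sigma$, so the defining set is non-empty on a small enough $U$. For finiteness, choose a compact neighborhood $U\ni p$; since length functions are continuous (indeed smooth) and $\mathcal T_g$ is a manifold, I can shrink $U$ so that $l_\alpha(q)\le 2 l_\alpha(p)$ for $q\in U$ for the finitely many $\alpha$ under consideration, but more robustly one uses the standard fact that on any hyperbolic surface the number of simple closed geodesics of length $\le K$ is finite, and this count is uniformly bounded over the compact set $U$ (Bers' argument / the collar lemma bound the number of disjoint short curves, and a short geodesic has bounded intersection with any fixed geodesic, giving a crude but finite bound). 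Hence $\{\alpha:\ l_\alpha(q)\le K\ \forall q\in U\}\subseteq\{\alpha:\ l_\alpha(p)\le K\}$ is finite, as required.

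With both hypotheses verified, Akrout's Theorem~\ref{thm_akrout} applies verbatim and yields that $\sys=\inf_\alpha l_\alpha$ is a topological Morse function on $\mathcal T_g$. I would also remark, as the excerpt does in the Remark preceding the corollary, that near any $\Sigma$ the systole is the minimum of finitely many smooth functions $l_\alpha$, which is what makes the Akrout normal-form analysis local and finite-dimensional.

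The main obstacle is the uniform finiteness statement in the second paragraph: one must ensure the bound on the number of short geodesics is uniform over the neighborhood $U$ and not just pointwise at $p$. This is handled by taking $U$ with compact closure and invoking continuity of the $l_\alpha$ together with the collar lemma, but it is the one place where a genuine (if standard) geometric estimate, rather than a formal manipulation, is needed.
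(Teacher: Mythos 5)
Your proof is correct and follows the same route as the paper, which simply combines Wolpert's positive-definiteness of the Weil--Petersson Hessian of each $l_\alpha$ with Akrout's Theorem~\ref{thm_akrout}. The only comment is that your worry about uniformity in the finiteness check is unnecessary: since the defining condition requires $l_\alpha(q)\le K$ for all $q\in U$, and $p\in U$, the set is already contained in $\{\alpha:\ l_\alpha(p)\le K\}$, which is finite by discreteness of the length spectrum.
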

 The systole function is also a topological Morse function on $\mathcal{M}_g$ because systole function is an invariant function on Teichm\"uller space.

 The set of all the critical points of $\sys_g$ in $\mathcal{T}_g$ is denoted as $Crit(\sys_g)$.

 \subsection{Analytic theory of Teichm\"uller space}
 We recall some definitions and theorems on the analytic theory of Teichm\"uller space. For details, we refer the reader to \cite{imayoshi2012introduction}. 
 \subsubsection{Quasi-conformal mapping on the complex plane}

 \begin{definition}
		 Let $f$ be a orientation-preserving homeomorphism of a domain $D\subset \mathbb{C}$ into $\mathbb{C}$. Then $f$ is quasi-conformal if and only if 
		 \begin{enumerate}
				 \item $f$ is absolutely continuous on lines (ACL). 
				 \item There exists a constant $k$ with $0\le k<1$ such that  
						 \[
								 \left| \frac{f_{\bar{z}}}{f_z}\right| \le k. 
						 \]
		 \end{enumerate}
		 \label{defi_qc}

 \end{definition}

		 \begin{remark}
				 The condition (1) in the above definition implies $f_z$ and $f_{\bar{z}}$ are well-defined on $D$ almost everywhere. 
		 \end{remark}

		 \begin{definition}
				 For a quasi-conformal map $f$ on a domain $D$, the quotient \[
						 \mu(z) = \frac{f_{\bar{z}}(z)}{f_z(z)}
				 \]
				 is called the complex dilatation of $f$ on $D$.
		 \end{definition}

		 We assume $B(\mathbb{C})_1$ is the unit open ball in $L^\infty(\mathbb{C})$, namely 
		 \[
				 B(\mathbb{C})_1 =\left\{ \mu\in L^\infty(\mathbb{C})| \| \mu\|_\infty <1 \right\}. 
		 \]
		 On the complex plane $\mathbb{C}$, for each quasi-conformal map $f$, there exists a function $\mu\in B(\mathbb{C})_1$ such that $f_{\bar{z}} = \mu f_z$. Conversely, for any $\mu\in B(\mathbb{C})_1$ we have 
		 \begin{theorem}[\cite{imayoshi2012introduction}, Theorem 4.30]
				 For any $\mu\in B(\mathbb{C})_1$, there is a homeomorphism $f$ from $\mathbb{C}\cup \left\{ \infty \right\}$ onto $\mathbb{C}\cup \left\{ \infty \right\}$ such that 
				 \begin{enumerate}
						 \item $f$ is a quasi-conformal map on $\mathbb{C}$. 
						 \item $f_{\bar{z}} = \mu f_z$ on $\mathbb{C}$ $a.e.$. 
				 \end{enumerate}
				 Moreover $f$ is unique up to biholomorphic automorphism on $\mathbb{C}\cup \left\{ \infty \right\}$. 
				 \label{thm_qc_exist}
		 \end{theorem}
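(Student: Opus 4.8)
The plan is to follow the classical Ahlfors--Bers construction: solve the Beltrami equation $f_{\bar z}=\mu f_z$ by a Neumann series in $L^p$ and then promote the resulting Sobolev solution to a quasiconformal homeomorphism by smooth approximation. One first reduces to the case that $\mu$ is supported in a fixed disk: writing $\mu=\mu_1+\mu_2$ with $\mu_1$ supported in the unit disk and $\mu_2$ outside it, one solves first for $\mu_1$ to obtain $f_1$, then transports $\mu_2$ by $f_1$ and by the inversion $z\mapsto 1/z$ (whose pullback of $\mu_2$ has the same sup-norm and compact support) to a coefficient supported in a disk, solves again, and composes, using that solutions of Beltrami equations compose and transform predictably under holomorphic changes of coordinate.

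For compactly supported $\mu$, introduce the Cauchy (Pompeiu) transform $Ph(z)=-\tfrac1\pi\iint_{\mathbb C}\frac{h(\zeta)}{\zeta-z}\,d\xi\,d\eta$, which inverts $\partial_{\bar z}$ in the distributional sense and is H\"older continuous as soon as $h\in L^p$ with $p>2$, together with the Beurling transform $Th=\partial_z(Ph)$, a Calder\'on--Zygmund singular integral operator with Fourier multiplier $\bar\zeta/\zeta$. The crucial analytic input is that $T$ is bounded on $L^p(\mathbb C)$ for all $1<p<\infty$, is an isometry of $L^2$ by Plancherel, and hence, by Riesz--Thorin interpolation, $\|T\|_{L^p}\to 1$ as $p\to 2$.

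Now seek $f=z+Ph$ with $h$ compactly supported in $L^p$ solving $h=\mu(1+Th)$, i.e.\ $(I-\mu T)h=\mu$. Since $\|\mu\|_\infty=k<1$, choosing $p>2$ close enough to $2$ gives $\|\mu T\|_{L^p}\le k\|T\|_{L^p}<1$, so $I-\mu T$ is invertible and $h=\sum_{n\ge0}(\mu T)^n\mu$ converges in $L^p$; then $f_{\bar z}=h$ and $f_z=1+Th$ yield $f_{\bar z}=\mu f_z$ a.e., and $f\in W^{1,p}_{\mathrm{loc}}$ is H\"older continuous. To see $f$ is a homeomorphism of $\mathbb C\cup\{\infty\}$, approximate $\mu$ by smooth compactly supported $\mu_n$ with $\|\mu_n\|_\infty\le k$ and $\mu_n\to\mu$ a.e.; for smooth $\mu_n$ classical elliptic theory makes the corresponding $f_n$ a diffeomorphism of $\mathbb C\cup\{\infty\}$, the Neumann series depends continuously on the coefficient so $h_n\to h$ in $L^p$ and hence $f_n\to f$ locally uniformly, and a locally uniform limit of $K$-quasiconformal homeomorphisms (here $K=\frac{1+k}{1-k}$) is again a $K$-quasiconformal homeomorphism or a constant; the normalization implicit in $f=z+Ph$, namely $f(z)=z+o(1)$ as $z\to\infty$, excludes the constant.

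Uniqueness follows because if $f,g$ both solve the equation with the same $\mu$, then $g\circ f^{-1}$ has vanishing $\bar\partial$-derivative a.e.\ by the transformation law for Beltrami coefficients under composition, hence is conformal by Weyl's lemma and so a biholomorphic automorphism of $\mathbb C\cup\{\infty\}$, giving $g=(\text{M\"obius})\circ f$. I expect the main obstacles to be the two harmonic-analysis/regularity ingredients: establishing the $L^p$-boundedness of the Beurling transform with operator norm tending to $1$ at $p=2$ (Calder\'on--Zygmund theory together with the Plancherel computation $\|T\|_{L^2}=1$), and the step converting the distributional $W^{1,p}_{\mathrm{loc}}$ solution into an honest homeomorphism and justifying passage to the limit, which rests on the compactness of normalized quasiconformal families and the ``limit is quasiconformal or constant'' dichotomy.
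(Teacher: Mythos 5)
This statement is quoted background (the measurable Riemann mapping theorem, cited from Imayoshi--Taniguchi, Theorem 4.30); the paper gives no proof of its own. Your sketch is the standard Ahlfors--Bers argument -- Cauchy and Beurling transforms, $\|T\|_{L^p}\to 1$ as $p\to 2$, Neumann series for $(I-\mu T)h=\mu$, smooth approximation plus the compactness/dichotomy for $K$-quasiconformal limits, and uniqueness via Weyl's lemma -- which is essentially the proof in the cited source, and it is correct at the level of detail given.
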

		 We denote by $f^\mu$ the uniquely determined normalized function satisfies (1) $(f^\mu)_{\bar{z}} = \mu (f^\mu)_z$ and (2) $f^\mu(0) = 0, f^\mu(1) = 1, f^\mu(\infty) = \infty$. The $f^\mu$ is called {\em canonical $\mu$-qc mapping}. 

		 The $f^\mu$ depends on $\mu$ 'continuously' in the following sense: 
		 \begin{proposition}[\cite{imayoshi2012introduction}, Proposition 4.36]
				 If $\mu$ converges to $0$ in $B(\mathbb{C})_1$, then the canonical $\mu$-qc mapping $f^\mu$ converges to the identity mapping locally uniformly on $\mathbb{C}$. 
				 \label{prop_qc_continuous}
		 \end{proposition}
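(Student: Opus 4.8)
The plan is to reduce the convergence of canonical $\mu$-qc mappings to the standard $L^p$-theory of the Beltrami equation and the compactness properties of families of quasi-conformal maps with uniformly bounded dilatation. First I would recall that for each $\mu \in B(\mathbb C)_1$ with $\|\mu\|_\infty = k$, the canonical solution $f^\mu$ of $(f^\mu)_{\bar z} = \mu (f^\mu)_z$ normalized by $f^\mu(0)=0, f^\mu(1)=1, f^\mu(\infty)=\infty$ is a $K$-quasi-conformal homeomorphism of $\widehat{\mathbb C}$ with $K = (1+k)/(1-k)$; this is exactly the existence-and-uniqueness content of Theorem \ref{thm_qc_exist}. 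Now suppose $\mu_n \to 0$ in $B(\mathbb C)_1$, i.e. $\|\mu_n\|_\infty \to 0$. In particular the dilatations $K_n = (1+\|\mu_n\|_\infty)/(1-\|\mu_n\|_\infty)$ satisfy $K_n \to 1$, so the family $\{f^{\mu_n}\}$ eventually lies in the set of $K$-qc self-maps of $\widehat{\mathbb C}$ fixing $0,1,\infty$ for any fixed $K>1$.

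The key step is a normal-families argument: the family of $K$-qc homeomorphisms of $\widehat{\mathbb C}$ fixing three points $0,1,\infty$ is compact in the topology of uniform convergence on $\widehat{\mathbb C}$ (equivalently, locally uniform convergence on $\mathbb C$), and any limit of a sequence of $K_n$-qc maps with $K_n \to 1$ is $1$-qc, hence conformal, hence a Möbius transformation; fixing $0,1,\infty$ forces it to be the identity. So it suffices to show that every subsequence of $\{f^{\mu_n}\}$ has a further subsequence converging locally uniformly to the identity. Given a subsequence, apply the compactness result to extract a locally uniformly convergent sub-subsequence $f^{\mu_{n_k}} \to g$. The limit $g$ is quasi-conformal with dilatation bound $\limsup K_{n_k} = 1$; to see $g$ actually solves the Beltrami equation with coefficient $0$ one invokes the continuity of the solution operator for the Beltrami equation under $L^\infty$ (indeed merely locally-in-measure) convergence of the coefficients, which is precisely the convergence statement for $(f^{\mu})_{\bar z}$ and $(f^\mu)_z$ in $L^p_{loc}$ built into the Ahlfors--Bers machinery; alternatively one argues directly that a $1$-qc homeomorphism is conformal by Weyl's lemma. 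Then $g$ is a conformal automorphism of $\widehat{\mathbb C}$ fixing $0,1,\infty$, so $g = \mathrm{id}$. Since the limit is the same ($\mathrm{id}$) along every subsequence, the full sequence $f^{\mu_n}$ converges to the identity locally uniformly on $\mathbb C$.

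The main obstacle is packaging the two analytic inputs cleanly: (i) the compactness (normal family) property of $K$-qc maps normalized at three points, and (ii) the fact that a locally uniform limit of $K_n$-qc maps with $K_n\to 1$ is conformal. Both are classical — (i) follows from the equicontinuity of normalized $K$-qc maps (Hölder estimates with exponent $1/K$) together with the Arzelà--Ascoli theorem, and (ii) follows from lower semicontinuity of the maximal dilatation under locally uniform convergence plus Weyl's lemma — but since this paper cites \cite{imayoshi2012introduction} for all quasi-conformal background, I would simply quote the relevant statements there rather than reproving them. With those two facts in hand, the subsequence extraction argument above is routine and completes the proof. Indeed, this is essentially how the proof proceeds in \cite{imayoshi2012introduction}, so here one need only note that the hypothesis ``$\mu \to 0$'' is used solely through $\|\mu\|_\infty \to 0$, which drives $K \to 1$ and pins down the unique possible limit.
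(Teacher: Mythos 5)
The paper does not prove this statement at all: it is quoted verbatim as background from Imayoshi--Taniguchi (Proposition 4.36), so there is no in-paper argument to compare against. Your normal-families proof is correct and is a legitimate alternative to the route usually taken in that reference, where the result is obtained from the explicit construction of $f^\mu$ (the singular-integral/Neumann-series representation for compactly supported coefficients, plus the decomposition trick for general $\mu$), i.e.\ from quantitative continuity of the solution operator in $\mu$. Your argument instead needs only two classical qualitative inputs: compactness of the family of $K$-quasiconformal self-maps of $\widehat{\mathbb C}$ fixing $0,1,\infty$, and lower semicontinuity of the maximal dilatation under locally uniform limits (so that a limit of $K_n$-qc maps with $K_n\to 1$ is $1$-qc, hence conformal by Weyl's lemma, hence the identity by the three-point normalization); the subsequence-extraction step then upgrades this to convergence of the full sequence. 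The trade-off is that your proof is softer and shorter but uses only $\|\mu_n\|_\infty\to 0$ through $K_n\to 1$, whereas the representation-formula proof gives the stronger statement (convergence of $f^{\mu_n}$ to $f^\mu$ under mere a.e.\ or in-measure convergence of bounded $\mu_n$, with quantitative control), which is what the cited book actually establishes; for the way the proposition is used in this paper, your weaker qualitative version suffices.
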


		 For a domain $D\subset \mathbb{C}$, we call any element in $B(D)$ a {\em Beltrami coefficient} on $D$. 

		 For quasi-conformal mappings on the upper half plane $\mathbb{H}^2$, similarly we have 
		 \begin{proposition}[\cite{imayoshi2012introduction}, Proposition 4.33]
				 Let $B(\mathbb{H}^2)_1$ be the unit open ball in $L^\infty(\mathbb{H}^2)$. For any $\mu\in B(\mathbb{H}^2)_1$, there is a quasi-conformal map $w$ from $\mathbb{H}^2$ onto $\mathbb{H}^2$ satisfies 
						  $w_{\bar{z}} = \mu w_z$ on $\mathbb{H}^2$ $a.e.$. 
						  $w$ is unique up to biholomorphic automorphism. 

				 Moreover, $w$ can be extended to a homeomorphism on $\mathbb{H}^2\cup \partial\mathbb{H}^2$. 
				 \label{thm_qc_exist}
		 \end{proposition}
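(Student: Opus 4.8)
The plan is to reduce the statement to the measurable Riemann mapping theorem on the whole sphere (Theorem~4.30 of \cite{imayoshi2012introduction}, quoted above) by a reflection argument. First I would extend the given $\mu\in B(\mathbb{H}^2)_1$ to a Beltrami coefficient $\hat\mu$ on all of $\mathbb{C}$ by reflecting across $\mathbb{R}$: set $\hat\mu(z)=\mu(z)$ when $\mathrm{Im}\,z>0$ and $\hat\mu(z)=\overline{\mu(\bar z)}$ when $\mathrm{Im}\,z<0$ (the values on $\mathbb{R}$ are irrelevant). Then $\hat\mu$ is measurable and $\|\hat\mu\|_\infty=\|\mu\|_\infty<1$, so Theorem~4.30 of \cite{imayoshi2012introduction} produces the canonical $\hat\mu$-qc homeomorphism $f^{\hat\mu}$ of $\mathbb{C}\cup\{\infty\}$, normalized by $f^{\hat\mu}(0)=0$, $f^{\hat\mu}(1)=1$, $f^{\hat\mu}(\infty)=\infty$, solving $(f^{\hat\mu})_{\bar z}=\hat\mu\,(f^{\hat\mu})_z$ a.e.

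Next I would check that $f^{\hat\mu}$ preserves $\mathbb{R}\cup\{\infty\}$. Put $g(z):=\overline{f^{\hat\mu}(\bar z)}$; this is again an orientation-preserving quasi-conformal homeomorphism of $\mathbb{C}\cup\{\infty\}$, and the chain rule for Beltrami differentials gives that its complex dilatation is $z\mapsto\overline{\hat\mu(\bar z)}$, which by the symmetry built into $\hat\mu$ equals $\hat\mu(z)$ for a.e. $z$. So $g$ and $f^{\hat\mu}$ solve the same Beltrami equation; by the uniqueness clause of Theorem~4.30 they differ by a M\"obius transformation, and since $g$ also fixes $0,1,\infty$ that transformation is the identity, whence $\overline{f^{\hat\mu}(\bar z)}=f^{\hat\mu}(z)$ for all $z$. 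Taking $z\in\mathbb{R}$ shows $f^{\hat\mu}(\mathbb{R}\cup\{\infty\})=\mathbb{R}\cup\{\infty\}$. Being an orientation-preserving self-homeomorphism of the sphere that fixes three points of this circle, $f^{\hat\mu}$ cannot interchange the two complementary discs, so it maps $\mathbb{H}^2$ onto $\mathbb{H}^2$. Then $w:=f^{\hat\mu}|_{\mathbb{H}^2}$ is the required map: a quasi-conformal homeomorphism of $\mathbb{H}^2$ onto $\mathbb{H}^2$ with complex dilatation $\hat\mu|_{\mathbb{H}^2}=\mu$ a.e., and it extends to a homeomorphism of $\mathbb{H}^2\cup\partial\mathbb{H}^2$ because $f^{\hat\mu}$ is a homeomorphism of $\mathbb{C}\cup\{\infty\}$ carrying $\mathbb{R}\cup\{\infty\}$ onto itself.

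For the uniqueness up to biholomorphic automorphism, if $w_1,w_2:\mathbb{H}^2\to\mathbb{H}^2$ both satisfy $w_{\bar z}=\mu w_z$ a.e., then $w_1\circ w_2^{-1}$ is a quasi-conformal self-map of $\mathbb{H}^2$ whose complex dilatation vanishes a.e. (again by the composition formula for complex dilatations), hence is conformal by Weyl's lemma and therefore a biholomorphic automorphism of $\mathbb{H}^2$; thus $w_1$ and $w_2$ differ by post-composition with an element of $\mathrm{Aut}(\mathbb{H}^2)$.

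The step I expect to demand the most care is the Beltrami-coefficient bookkeeping for the reflected map $g$ — verifying the identity $\mu_g(z)=\overline{\hat\mu(\bar z)}$ and hence, via the $0,1,\infty$ normalization, that $f^{\hat\mu}$ genuinely fixes $\mathbb{R}\cup\{\infty\}$; once that is in hand the rest is either the quoted theorem or standard facts (Weyl's lemma, and the cyclic-order argument that an orientation-reversing homeomorphism of a circle fixes at most two points). A secondary point worth stating explicitly is that ``unique up to biholomorphic automorphism'' here means modulo post-composition by $\mathrm{Aut}(\mathbb{H}^2)$, which is precisely what the Weyl's lemma argument delivers.
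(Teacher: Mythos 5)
Your reflection argument is correct and is precisely the standard proof of this fact (it is how \cite{imayoshi2012introduction} establishes Proposition 4.33, which the paper quotes as a preliminary without reproving): extend $\mu$ symmetrically, apply Theorem \ref{thm_qc_exist} for the plane, use the uniqueness clause to get $\overline{f^{\hat\mu}(\bar z)}=f^{\hat\mu}(z)$, and conclude invariance of $\mathbb{H}^2$ plus boundary extension, with uniqueness up to $\mathrm{Aut}(\mathbb{H}^2)$ via the composition formula and Weyl's lemma. No gaps worth flagging.
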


		 We let $w^\mu$ to be the unique quasi-conformal map on $\mathbb{H}^2$, whose Beltrami coefficient is $\mu$, under the normalization condition $w^\mu(0)=0$, $w^\mu(1)=1$ and $w^\mu(\infty)=\infty$. 

		 Similar to $B(\mathbb{C})_1$, $\mu\in B(\mathbb{H}^2)_1$ converges to $0$ implies $w^\mu$ converges to the identity mappings locally uniformly on $\mathbb{H}^2$.

 \subsubsection{Quasi-conformal mapping on Riemann surfaces}

 \begin{definition}
		 A homeomorphism $f:R\to S$ between two closed genus $g$ ($g\ge 2$) Riemann surface $R$, $S$ is a quasi-conformal map if 
		 for the universal cover of $R$ and $S$, denoted $\tilde{R}$ and $\tilde{S}$ respectively, 
		 there is a lift $\tilde{f}: \tilde{R} \to \tilde{S}$ of $f$, such that $\tilde{f}$ is a quasi-conformal map as is defined in Definition \ref{defi_qc}. 
 \end{definition}
 We remark that this definition is independent to the choice of the lift. 

 We need an analytic definition of Teichm\"uller space with a base point:
 \begin{definition}
		 For a closed Riemann surface $R$ with genus at least $2$, we consider a pair $(S,f)$, here $S$ is also a Riemann surface and $f:R\to S$ is a quasi-conformal map from $R$ onto $S$. Two pairs $(S_1,f_1)$, $(S_2,f_2)$ are equivalent if $f_2\circ f_1^{-1}$ is homotopic to a conformal map. The equivalent class is denoted by $[S,f]$. 

		 The Teichm\"uller space with a base point $R$ (denoted $T(R)$) consists of the equivalent classes $[S,f]$. 
 \end{definition}

 Similarly the Teichm\"uller space of Fuchsian group with a base point is defined as follows: 

 \begin{definition}
		 For a Fuchsian group $\Gamma$, we let $w$ be a quasiconformal map on $\mathbb{C}\cup \left\{ \infty \right\}$ such that \begin{enumerate}
				 \item $w\Gamma w^{-1}$ is also a Fuchsian group.
				 \item $w(0)=0$, $w(1)=1$ and $w(\infty)=\infty$. 
		 \end{enumerate}

		 Two such maps $w_1$, $w_2$ are equivalent if $w_1=w_2$ on the real axis. 

		 The Teichm\"uller space of Fuchsian group with a base point $\Gamma$ (denoted $T(\Gamma)$) consists of the equivalent classes $[w]$. 
 \end{definition}
 If $R$ is a closed Riemann surface and $\Gamma$ is a Fuchsian model of $R$, then $T(\Gamma)$ is identified with $T(R)$. 

 For any $\mu\in B(\mathbb{H}^2)$, $w^\mu\Gamma (w^\mu)^{-1}$ is Fuchsian if and only if for any $ \gamma \in \Gamma$
 \begin{equation}
		 \mu = (\mu\circ \gamma )\frac{\overline{\gamma'}}{\gamma'} \text{\,\,a.e. on } \mathbb{H}^2. 
		 \label{for_fuchsian}
 \end{equation}

 We let 
 \[
		 B(\mathbb{H}^2,\Gamma) = \left\{ \mu\in L^{\infty}|
		 \mu \text{ satisfies (\ref{for_fuchsian}), } \forall \gamma \in \Gamma\right\}. 
 \]

 For $\mu\in B(\mathbb{H}^2,\Gamma)$, $\mu$ is called a Beltrami differential on $\mathbb{H}^2$ with respect to $\Gamma$. 
 Also, we set
 \[
		 B(\mathbb{H}^2,\Gamma)_1 = \left\{ \mu\in B(\mathbb{H}^2,\Gamma) |\|\mu\|_\infty <1\right\}.
 \]
An element in $B(\mathbb{H}^2,\Gamma)_1$ is called a Beltrami coefficient on $\mathbb{H}^2$ with respect to $\Gamma$.

There is a projection $H$ from $B(\mathbb{H}^2,\Gamma)$ to itself.  
\[
		H:B(\mathbb{H}^2,\Gamma) \to B(\mathbb{H}^2,\Gamma). 
\]
The image of $H$ is denoted $HB(\mathbb{H}^2,\Gamma)$.
An element in $HB(\mathbb{H}^2,\Gamma)$ is called a {\em harmonic Beltrami differential}. The tangent space of $T(\Gamma)$ at the base point $\Gamma$ is identified with $HB(\mathbb{H}^2,\Gamma)$. 

To obtain Lemma \ref{lem_harmonic}, we briefly recall how to construct $H[\mu]$ from $\mu$. 
\begin{equation}
		H[\mu](z) = -\frac{2}{(\mathrm{Im}\: z)^2}\dot \Phi_0[\mu](\bar{z}). 
		\label{for_harmonic}
\end{equation}
Here 
\[
		\dot \Phi_0[\mu](\bar{z}) = -\frac{6}{\pi}\iint_{\mathbb{H}^2} \frac{\mu(\zeta)}{(\bar{z}-\zeta)^4} \mathrm{d} \xi \eta
\] 
is the derivative of the Bers' embedding at the base point and $\zeta = \xi+i\eta$. 

On the tangent space $T(\Gamma) = HB(\mathbb H^2,\Gamma)$, the real $L^2$-inner product of the harmonic Beltrami differential is called the {\em (real) Weil-Petersson inner product} and denoted by $(,)_{wp}$. This inner product defines a Riemannian metric on the Teichm\"uller space, equivariant to the mapping class group, called the {\em Weil-Petersson metric}. Distance between two points in $\mathcal T_g$ or $\mathcal M_g$ with respect to Weil-Petersson is called {\em Weil-Petersson distance} and denoted by $d_{wp}(,)$.

\subsection{Quadratic differential and Teichm\"uller geodesics}

This subsection is a short sketch on Teichm\"uller geodesics and extremal length, for details, see \cite{masur2009geometry}.

For a quasi-conformal map $f:X\to Y$, $X,Y\in \mathcal{T}_g$. 
We let \[
		K(f) = \sup_{z\in X} \frac{1+ |\mu_f(z)|}{1- |\mu_f(z)|}. 
\]
Here $\mu_f$ is the complex dilatation of $f$ defined in the last subsection.

The Teichm\"uller distance on $\mathcal{T}_g$ is defined to be 
\[
		d_{\mathcal{T}}(X) = \frac{1}{2}\inf_{f\sim id} \left\{ \log K(f)| f:X\to Y \right\}. 
\]

A Teichm\"uller geodesic ray with respect to Teichm\"uller distance from $X\in \mathcal{T}_g$ can be induced from a holomorphic quadratic differential $q$ on $X$. A {\emph holomorphic quadratic differential} is a tensor locally written as $\psi(z)\mathrm{d}z^2$, where $\psi(z)$ is a holomorphic function. We denote the space of quadratic differentials on $X$ as $QD(X)$. The bundle of quadratic differentials over $\mathcal{T}_g$ is denoted by $QD_g$.

For $X\in \mathcal T_g$ and $q\in QD(X)$, for any $0<k<1$, $\mu_k=k\frac{\bar{q}}{q}$ is a Beltrami coefficient on $X$. We let $f_k$ be the quasi-conformal induce by $\mu_k$, $f_k:X\to X^{(k)}$. Then $f_k$ is the Teichm\"uller map from $X$ to $X^{(k)}$ and the Teichm\"uller geodesic ray induced by $(X,q)$ consists of all the $X^{(k)}$ for all $k\in (0,1)$.

A non-zero $q\in QD(X)$ has a canonical coordinate. In this coordinate, $q$ can be locally written as $\mathrm{d} z^2$ in the neighborhood of any non-zero point of $q$ and $q$ has only finitely many zero points. 

The quadratic differential $q$ determines a pair of tranversed measured foliation on $X$, called {\emph horizontal and vertical foliations} for $q$ and denoted by $F_h(q)$ and $F_v(q)$ respectively. In the canonical coordinate of $q$, the leaves of $F_h(q)$ is given by $y=\text{const}$ and the leaves of $F_v(q)$ is given by $x=\text{const}$. Here $z = x + iy$ is the coordinate. The measure of  $F_h(q)$ and  $F_v(q)$ are given by $|\mathrm d y|$ and $|\mathrm d x|$ respectively. 

For $X_t$ on the geodesic induced by $(X,q)$ with $d_{\mathcal T}(X,X_t)=t$, there is a quadratic differential $q_t\in QD(X_t)$ as the push forward of $q$ by $f_t$. We let $z=x+iy$ be the canonical coordinate of $(X,q)$ and $w=u+iv$ be the canonical coordinate of $(X,q)$, then 

\begin{equation}
		u = e^tx \text{ and } v= e^{-t}y
		\label{for_teich_dist}
\end{equation}

\subsection{Jenkins-Strebel differential}

A quadratic differential $q\in QD(X)$ is called a {\emph Jenkins-Strebel differential} if 
any leaf of $F_h(q)$ and $F_v(q)$ is a simple closed curve except finitely many leaves that connect zeros of $q$. 

For a Jenkins-Strebel differential $q\in QD(X)$ and a simple closed leaf $\alpha$ of $F_h(q)$, all simple closed leaves of $F_h(q)$ parallel to $\alpha$ form a cylinder in $X$. This cylinder is called the {\em characteristic ring domain} of $\alpha$. This cylinder, with respect to the metric $|q|$ is an Euclidean cylinder, isometric to $R=[0,a]\times (0,b)/((0,t)\sim (a,t),0<t<b)$. We call $a$ the \emph length of $R$ and $b$ the \emph height of $R$.

We need the following theorem on Jenkins-Strebel differential: 

\begin{theorem}[\cite{strebel1984quadratic}, Theorem 21.1 ]
		Let $(\gamma_1,\dots,\gamma_p)$ be a finite, pairwisely disjoint, essential curve system in $X\in \mathcal{T}_g$.  For each $\gamma_i$, there is a regular neighborhood $R'_i$ of $\gamma_i$ in $X$ and $R'_1,\dots,R'_p$ are pairwisely disjoint. Then for any $(b_1,\dots,b_p) \in \mathbb{R}^p_+$, there is a unique Jenkins-Strebel differential $q\in QD(X)$ such that 
		\begin{itemize}
				\item $\gamma_i$ is a leaf of $F_h(q)$. 
				\item A simple closed leaf of $F_h(q)$ must be freely homotopic to a $\gamma_i$, here $i=1,2,\dots,\text{ or }, p$. 
				\item Let $R_i$ be the characteristic ring domain of $\gamma_i$, then the height of $R_i$ is $b_i$ with respect to the metric $|q|$. 
		\end{itemize}
		\label{thm_bi}
\end{theorem}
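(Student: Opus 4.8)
\emph{Proof idea.} This is the classical existence-and-uniqueness theorem for Jenkins--Strebel differentials with prescribed heights, and the natural route is the \emph{method of the extremal metric}. The plan is first to recast the conclusion as the solution of an extremal problem attached to the data $\left([\gamma_1],\dots,[\gamma_p]\right)$ and $(b_1,\dots,b_p)$. Let $\mathcal R$ be the family of all systems $(R_1,\dots,R_p)$ of pairwise disjoint embedded open cylinders on $X$ with $R_i$ in the free homotopy class of $\gamma_i$, and on $\mathcal R$ define
\[
		\mathcal A(R_1,\dots,R_p) \;=\; \sum_{i=1}^p \frac{b_i^2}{M(R_i)},
\]
where $M(R_i)$ is the conformal module of $R_i$. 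Geometrically $b_i^2/M(R_i)$ is the Euclidean area of $R_i$ once its flat metric is normalized to have height $b_i$, so $\mathcal A$ is the total area of the (partial) flat metric $\rho\,|dz|$ on $X$ obtained by gluing these normalized flat cylinders and extending by $0$. Equivalently — and this is the form I would actually work with — one minimizes $\iint_X \rho^2\,|dz|^2$ over all conformal metrics $\rho\,|dz|$ on $X$ whose $\rho$-distance between the two boundary circles of the fixed regular neighborhood $R'_i$ is $\ge b_i$ for every $i$. The claim to be proved is that the minimizer is exactly $\rho_0=|q|^{1/2}$ for the desired Jenkins--Strebel differential $q$.

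\emph{Existence of an extremal configuration.} Since the $R_i$ are disjoint and lie in fixed homotopy classes on the fixed compact surface $X$, the modules $M(R_i)$ are bounded above; hence $\mathcal A$ is bounded below by a positive constant, and its infimum is attained: along a minimizing sequence one passes to a Carath\'eodory-kernel limit of the ring-domain systems, equivalently one takes the competing metrics $\rho$ in a set that is weak-$\ast$ precompact in $L^2$ and uses lower semicontinuity of the area functional together with stability of the length constraints in the limit (Fuglede / length-area semicontinuity). This produces an extremal system with associated metric $\rho_0$.

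\emph{Regularity: the extremal metric is Jenkins--Strebel.} This is the heart of the argument. By Beurling's criterion for extremal metrics of module/extremal-length problems — made rigorous through a first-variation computation, perturbing $\rho_0$ to $\rho_0\left(1+\varepsilon\,\mathrm{Re}\,h\right)$ for holomorphic test data $h$ and using minimality — the extremal $\rho_0$ must have the form $|q|^{1/2}$ with $q$ holomorphic off a closed set of measure zero, hence $q\in QD(X)$ after removing the removable singularities. One then analyzes the horizontal trajectory structure of $q$ via Strebel's local and global structure theorems: extremality forbids spiralling or recurrent trajectories and forbids any ring domain or minimal domain in a homotopy class other than one of the $[\gamma_i]$, since any such feature could be excised or reshaped so as to strictly decrease $\mathcal A$ while preserving the height constraints. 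Therefore every non-critical horizontal trajectory of $q$ is a closed curve freely homotopic to some $\gamma_i$; the closed trajectories homotopic to $\gamma_i$ sweep out the characteristic ring domain $R_i$; these $R_i$ fill $X$ up to the critical graph; and $\gamma_i$ itself is a leaf. This is precisely the asserted structure, and the last two bullets of the theorem are now read off.

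\emph{Heights, uniqueness, and the main obstacle.} The height constraint is active at the minimizer: the equality case of the length-area inequality on each cylinder $R_i$ (Cauchy--Schwarz with equality) forces the $|q|$-height of $R_i$ to equal $b_i$ exactly, for if it exceeded $b_i$ one could rescale that single cylinder downward and decrease $\mathcal A$. For uniqueness, suppose $q_1$ and $q_2$ both satisfy the conclusion; then each $|q_j|^{1/2}$ is admissible for the extremal problem solved by the other, so applying the length-area inequality in both directions gives $\iint_X|q_1|=\iint_X|q_2|$ with equality throughout, whence $|q_1|=|q_2|$ almost everywhere, and since a holomorphic quadratic differential is determined by its modulus up to a unimodular constant and the common horizontal foliation (closed leaves in the classes $[\gamma_i]$) pins down that constant, $q_1=q_2$. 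The delicate point is the regularity step: showing that the extremal metric really is $|q|^{1/2}$ for a \emph{holomorphic} $q$ across the critical graph, and then ruling out all non-closed-trajectory behaviour via the global trajectory-structure theory — this is where the full weight of Strebel's structure theorems and a careful deformation argument are needed, while everything else is either soft compactness or a routine equality-case analysis.
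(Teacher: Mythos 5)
The paper does not prove this statement at all: it is quoted verbatim as Theorem 21.1 of Strebel's book \emph{Quadratic Differentials} and used as a black box (its only role here is to produce the $\bar G$-invariant differential $q$ in Section 6), so there is no internal proof to compare against. Your outline is essentially the classical extremal-metric route by which the result is actually established in the literature: pose a weighted module/area extremal problem for the classes $[\gamma_1],\dots,[\gamma_p]$, get existence by compactness and length--area semicontinuity, identify the extremal metric as $|q|^{1/2}$ for a holomorphic $q$ with closed trajectories via first-variation/Beurling-type arguments and Strebel's trajectory-structure theorems, read off the heights from the equality case, and get uniqueness by running length--area in both directions --- this matches the Jenkins--Strebel ``module problem'' / minimal-norm treatment in Strebel's Chapter VI, so as a sketch it is faithful to the source.

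Two caveats you should be aware of. First, your ``equivalent'' reformulation --- minimizing $\iint_X \rho^2$ subject to the $\rho$-distance between the two boundary circles of the \emph{fixed} regular neighborhoods $R'_i$ being at least $b_i$ --- is not the correct dual problem: the constraint in the heights/minimal-norm property must be imposed on the lengths of all arcs crossing the homotopy class of $\gamma_i$ (equivalently one maximizes $\sum_i b_i^2 M(A_i)$ over disjoint annuli $A_i$ in the classes $[\gamma_i]$), since the extremal cylinders need not sit inside the chosen $R'_i$ and curves joining $\partial R'_i$ inside $X$ form a different family; as stated, the minimizer of your constrained problem need not be the Jenkins--Strebel differential for $(\gamma_i,b_i)$. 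Second, the steps you flag as the heart of the matter (holomorphy of the extremal $q$ across the critical set and the exclusion of ring or minimal domains in extraneous homotopy classes) are exactly the content of Strebel's structure theory and are invoked rather than proved; that is fine for a sketch, but it means your argument is a roadmap to Strebel's proof rather than an independent one. Since the paper itself simply cites Strebel, the cleanest course here is to do the same.
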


\subsection{Extremal length and hyperbolic length}

The definition of {\emph extremal length} of an essential curve $\alpha$ in a Riemann surface $X$ is given by
\[
		\Ext_\alpha(X) = \sup_{\rho} \frac{l_{\alpha}(\rho)^2}{{\mathrm Area}(X,\rho)}. 
\]
Here the supremum is taken over all metrics $\rho$ conformal to the metric on $X$, $l_{\alpha}(\rho)$ is the length of $\alpha$ in the metric $\rho$ and ${\mathrm Area}(X,\rho)$ is the area of $X$ in the metric $\rho$. 

For a Euclidean cylinder with length $a$ and height $b$, the extremal length of its core curve in the cylinder is $a/b$, see for example \cite{ahlfors2006lectures}. 

We can express distance between points on a Teichm\"uller geodesic by extremal lengths of  horizontal foliation leaves in their characteristic ring domains. 
For a Jenkins-Strebel differential $q\in QD(X)$, we let $\alpha$ be a simple closed leaf of $F_h(q)$ and $R$ be the characteristic ring domain of $\alpha$ with length $a$ and height $b$. We consider $X_t$ on the Teichm\"uller geodesic induced by $(X,q)$ with $d_{\mathcal T}(X,X_t)=t$. 
We let $q_t\in QD(X_t)$ be the push forward of $q$ by the Teichm\"uller map $f_t:X\to X_t$. 
Hence by (\ref{for_teich_dist}) and definition of horizontal foliation, 
$f_t(\alpha)$ is a leaf of $F_h(q_t)$, and
$f_t(R)$ is the characteristic ring domain of $f_t(\alpha)$ with length $e^ta$ and height $e^{-t}b$. Then 
$\Ext_{\alpha}(R)= \frac{a}{b}$ and $\Ext_{f_t(\alpha)}(f_t(R))= e^{2t}\frac{a}{b}$. 
Then 
		\begin{equation}
				d_{\mathcal{T}}(X,X_t) = \frac{1}{2}\left |\log \frac{\Ext_{f_t(\alpha)}(f_t(R))}{\Ext_{\alpha}(R)} \right |. 
				\label{for_dist_extremal}
		\end{equation}

The rest of this subsection is the comparison between hyperbolic length and extremal length by Maskit.

		For a simple closed geodesic $\alpha$ in a hyperbolic surface $X$, the \em collar of $\alpha$ with width $w$ is an embeded cylinder in $X$ consisting of points with distance at most $w$ to $\alpha$. 
		\begin{theorem}[\cite{maskit1985comparison}] In hyperbolic surface $X$, if a simple closed geodesic $\alpha$ has collar $C$ with width $\arccosh \frac{1}{\cos \theta}$ then 
				\[
						\frac{1}{\pi}l_{\alpha}(X) \le \Ext_{\alpha}(X)\le \Ext_{\alpha}(C)\le \frac{1}{2\theta}l_{\alpha}(X).
				\]
				\label{thm_maskit}
		\end{theorem}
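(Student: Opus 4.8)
The plan is to establish the three inequalities separately, using throughout that for an annulus the extremal length of the core curve equals the reciprocal of the conformal modulus, together with the description of the standard collar of $\alpha$ in Fermi coordinates. Write $\ell=l_{\alpha}(X)$. In Fermi coordinates $(t,\rho)$ around $\alpha$ (with $t\in\mathbb{R}/\ell\mathbb{Z}$ the arclength along $\alpha$ and $\rho$ the signed distance to $\alpha$), the hyperbolic metric is $d\rho^{2}+\cosh^{2}\!\rho\,dt^{2}$, and the substitution $\sigma=\arctan(\sinh\rho)$ rewrites it as $\cosh^{2}\!\rho\,(dt^{2}+d\sigma^{2})$. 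Thus the collar $C=\{|\rho|\le w\}$ is conformally a Euclidean cylinder of length $\ell$ and height $2\arctan(\sinh w)$, while the cyclic cover $\widehat X=\mathbb{H}^{2}/\langle\gamma\rangle$ corresponding to $\langle\alpha\rangle\le\pi_{1}(X)$ ($\gamma$ hyperbolic of translation length $\ell$) is conformally a Euclidean cylinder of length $\ell$ and height $\pi$, so $\operatorname{Mod}(\widehat X)=\pi/\ell$. Since $\cosh w=\sec\theta$ forces $\sinh w=\tan\theta$, hence $\arctan(\sinh w)=\theta$, we get $\Ext_{\alpha}(C)=\ell/(2\theta)$; this already yields the right-hand inequality, in fact with equality.

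For the middle inequality $\Ext_{\alpha}(X)\le\Ext_{\alpha}(C)$: the collar $C\subset X$ is an embedded annulus whose core is freely homotopic to $\alpha$, so any conformal metric $\rho$ on $X$ restricts to a conformal metric on $C$ with $\mathrm{Area}(C,\rho|_{C})\le\mathrm{Area}(X,\rho)$ and with the $\rho|_{C}$-length of the core class of $C$ at least $l_{\alpha}(X,\rho)$ (a curve in $C$ homotopic to the core of $C$ is, in $X$, a curve homotopic to $\alpha$). Hence $l_{\alpha}^{2}/\mathrm{Area}$ can only increase on passing to $C$, and taking suprema over $\rho$ gives the inequality. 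Equivalently, $C$ is one of the annuli competing in the modulus description, so $\Ext_{\alpha}(X)=1/\sup_{A}\operatorname{Mod}(A)\le 1/\operatorname{Mod}(C)=\Ext_{\alpha}(C)$.

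For the left-hand inequality $\tfrac1\pi\ell\le\Ext_{\alpha}(X)$, I would use $\Ext_{\alpha}(X)=\big(\sup_{A}\operatorname{Mod}(A)\big)^{-1}$, the supremum taken over embedded annuli $A\subset X$ with core homotopic to $\alpha$, and show $\operatorname{Mod}(A)\le\pi/\ell$ for every such $A$. Indeed $\mathrm{im}(\pi_{1}A\to\pi_{1}X)=\langle\alpha\rangle$, so the inclusion $A\hookrightarrow X$ lifts to a conformal embedding of $A$ onto one component $\widehat A$ of the preimage of $A$ in $\widehat X$, and the core of $\widehat A$ is homotopic to the core of $\widehat X$. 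The Gr\"otzsch monotonicity inequality for nested essential annuli then gives $\operatorname{Mod}(A)=\operatorname{Mod}(\widehat A)\le\operatorname{Mod}(\widehat X)=\pi/\ell$, and taking the supremum over $A$ completes the argument.

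I expect the only delicate points to be bookkeeping rather than substance: invoking cleanly the classical equivalence between the length--area definition of extremal length used in this paper and the reciprocal-of-modulus description in the annular case; verifying that an essential embedded annulus in $X$ really does lift to an essential embedded annulus in the cyclic cover, so that Gr\"otzsch monotonicity applies; and keeping straight the convention that the ``width'' of the collar is the distance from $\alpha$ to one boundary circle, i.e.\ half the distance between the two boundary circles of $C$.
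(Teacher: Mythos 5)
Your proposal is correct, but there is nothing in the paper to compare it with: the paper states this result as a quotation of Maskit's theorem and gives no proof, so the only question is whether your argument stands on its own, and it does. The right-hand bound is exactly as you say: in Fermi coordinates the collar of width $w$ is conformally a Euclidean cylinder of circumference $\ell$ and height $2\arctan(\sinh w)$, and $\cosh w=1/\cos\theta$ gives $\sinh w=\tan\theta$, so $\Ext_{\alpha}(C)=\ell/(2\theta)$ (with equality, consistent with the paper's convention that ``width'' is the distance to $\alpha$, so the conformal height is $2\theta$). The middle inequality by restricting competing metrics from $X$ to $C$ is the standard monotonicity and is airtight. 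For the left-hand bound, the one genuinely nontrivial ingredient you import is the identity $\Ext_{\alpha}(X)=\bigl(\sup_{A}\operatorname{Mod}(A)\bigr)^{-1}$: the direction you actually need, $1/\Ext_{\alpha}(X)\le\sup_{A}\operatorname{Mod}(A)$, is the deep half (existence of near-extremal embedded annuli, i.e.\ Jenkins--Strebel theory, of the kind the paper itself quotes from Strebel), so you should cite it as such rather than as mere bookkeeping; granted that, the lifting of an embedded essential annulus conformally into the annular cover $\widehat X$ of modulus $\pi/\ell$ and Gr\"otzsch monotonicity finish the argument correctly. An alternative that avoids Jenkins--Strebel entirely is to work with the Beurling modulus of the curve family $\Gamma$ of closed curves freely homotopic to $\alpha$: every such curve lifts closed to $\widehat X$, and pushing an admissible density $\hat\rho$ on $\widehat X$ down by $\rho(x)=\bigl(\sum_{\hat x\in\pi^{-1}(x)}\hat\rho(\hat x)^{2}\bigr)^{1/2}$ preserves total mass and keeps admissibility, giving $1/\Ext_{\alpha}(X)=\operatorname{mod}(\Gamma)\le\operatorname{mod}(\widehat\Gamma)=\pi/\ell$ directly; this is closer to the elementary spirit of Maskit's original comparison.
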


\subsection{The Fenchel Nielsen deformation}

The Fenchel Nielsen deformation at time $t$ along a simple closed geodesic is defined as follows: 
For $X\in \mathcal{T}_g$ and $\alpha\subset X$ is a simple closed curve, we cut $X$ along $\alpha$ and get a surface (could be disconnected) with boundary. A new surface $X_t$ is constructed by gluing the boundary components back with a left twist of distance $t$ (see Figure \ref{fig_twist_1}, \ref{fig_twist_2}). The term left twist means the image of any oriented curve $\beta\subset X$ transverse to $\alpha$ in $X_t$ 'turns left' with respect to $\beta$'s orientation when meeting $\alpha$. 

\begin{figure}
		\begin{subfigure}{.45\linewidth}
				\centering
				\includegraphics{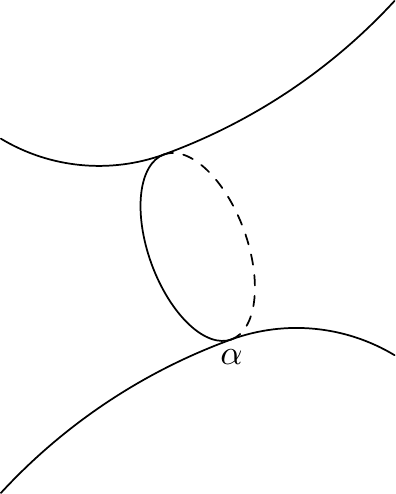}
				\caption{Neighborhood of $\alpha$ in $X$ }
				\label{fig_twist_1}
		\end{subfigure}
		\begin{subfigure}{.45\linewidth}
				\centering
				\includegraphics{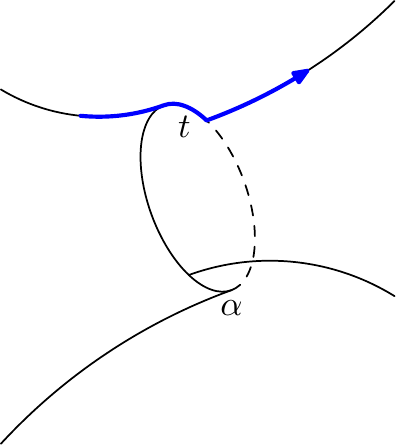}
				\caption{Neighborhood of $\alpha$ in $X_t$ }
				\label{fig_twist_2}
		\end{subfigure}

		\caption{The Fenchel-Nielsen deformation on a simple closed curve}
		\label{fig_twist}
\end{figure}

The Fenchel-Nielsen definition along a family of of disjoint simple closed geodesics is defined as the simutaneous Fenchel-Nielsen deformation along every curve in the family. 

\subsection{Hyperbolic trigonometric formulae \cite{buser2010geometry}}

 {Right angle triangle (Figure \ref{fig_right_angle_triangle})  \cite[p.454]{buser2010geometry}} 
 \begin{equation}
		 \cosh c = \cot \alpha \cot \beta. \label{for_tri_2}
 \end{equation}

Trirectangles (Figure \ref{fig_trirectangle}) \cite[p.454]{buser2010geometry} 
\begin{equation}
		\cos \varphi = \sinh a \sinh b. \label{for_trirect_1} 
\end{equation}

Right-angled pentagon (Figure \ref{fig_pentagon})  \cite[p.454]{buser2010geometry}
\begin{equation}
		\cosh c = \sinh a \sinh b.
		\label{for_pentagon}
\end{equation}

Right-angled hexagon (Figure \ref{fig_hexagon}) \cite[p.454]{buser2010geometry}
\begin{equation}
		\cosh c = \sinh a \sinh b \cosh \gamma -\cosh a \cosh b.
		\label{for_hexagon}
\end{equation}
\begin{figure}
\begin{subfigure}{0.45\linewidth}
\centering
\includegraphics{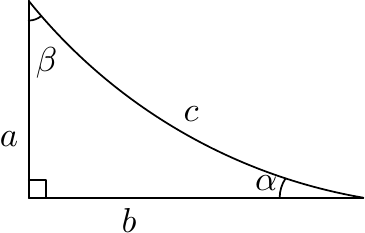}
\caption{}
\label{fig_right_angle_triangle}
\end{subfigure}
		\begin{subfigure}{.45\linewidth}
				\centering
				\includegraphics{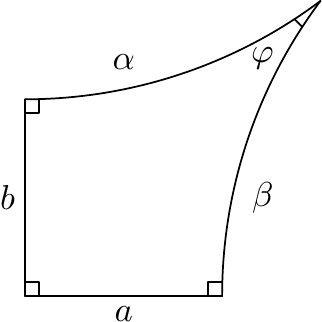}
				\caption{The trirectangle}
				\label{fig_trirectangle}
		\end{subfigure}

		\begin{subfigure}{.45\linewidth}
				\centering
				\includegraphics{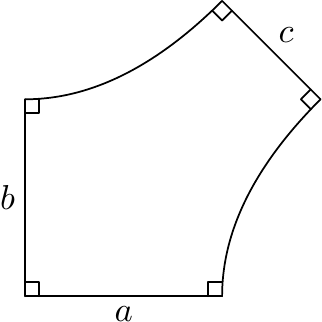}
				\caption{The right-angled pentagon}
				\label{fig_pentagon}
		\end{subfigure}
		\begin{subfigure}{.45\linewidth}
				\centering
				\includegraphics{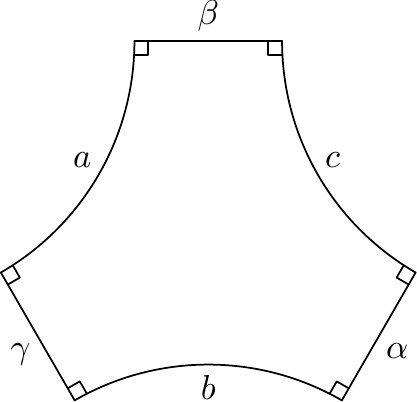}
				\caption{The right-angled hexagon}
				\label{fig_hexagon}
		\end{subfigure}
		\caption{Hyperbolic polygons}
		\label{fig_hyp_polygon}
\end{figure}

 \section{The surface $S_g$}
 \label{sec_hole}

In this section we construct a surface $S_g$, whose Teichm\"uller distance to $X_g$ is at least $\frac{1}{4} \log\left( \log{g} - \log 12 \right)$. 

\subsection{Construction of the surface $S_g$ when $g=3\cdot 2^{n-1}$}

To construct a surface $S_g$, we first construct a trivalent tree $T(n)$ with $m$ vertices. Diameter of the tree is required to be  comparable with $\log m$. 

We pick three full binary trees of depth $n$. Hence each of the tree has $2^{n}-1$ vertices and $2^{n-1}$ leaves. 

We join the trees at their roots by a vertex $O$ , then we get a trivalent tree with $3(2^n-1)+1$ vertices and $3\cdot 2^{n-1}$ leaves (see Figure \ref{fig_pants_2}). Distance from $O$ to any leaf is at least $n$. We denote the tree by $T(m)$ where $m=3\cdot (2^n-1)+1$ is the number of vertices in the tree. 
 \begin{figure}[htbp]
		 \centering
		 \includegraphics{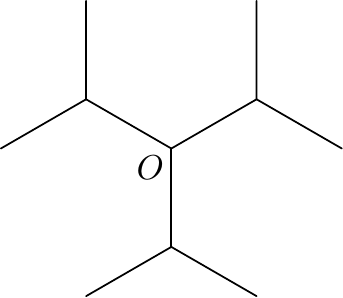}
		 \caption{The tree}
		 \label{fig_pants_2}
 \end{figure}

 Now we construct the surface $S_g$ from the tree $T(m)$. 
 We pick several isometric pairs of pants. Each pair consists of two regular right-angled hexagons. A boundary component of the pants is called a \em cuff and an edge of the hexagons in the interior of the pants is called a a seam. We glue the pants together according to the trivalent tree $T(m)$. A vertex of $T(m)$ corresponds to a pair of pants, two pairs of pants are glued togeter at a cuff if the corresponding vertices are connected by an edge. Now we get a sphere with $6\cdot 2^{n}$ boundary components(Figure \ref{fig_pants_1}). For each pair of pants corresponding to a leaf in the tree, we glue  together the two cuffs of the pair that are not glued with other pants. Then we get a closed surface. At each cuff, ends of seams from the two sides of the cuff are required to be glued together. This surface we construct is the surface $S_g$, where $g=3\cdot 2^{n-1}$ equals the number of leaves of $T(m)$. The construction of $S_g$ for $3\cdot 2^{n-1}<g<3\cdot 2^{n}$ is delayed in the end of this section. 

 \begin{figure}[htbp]
		 \centering
		 \includegraphics{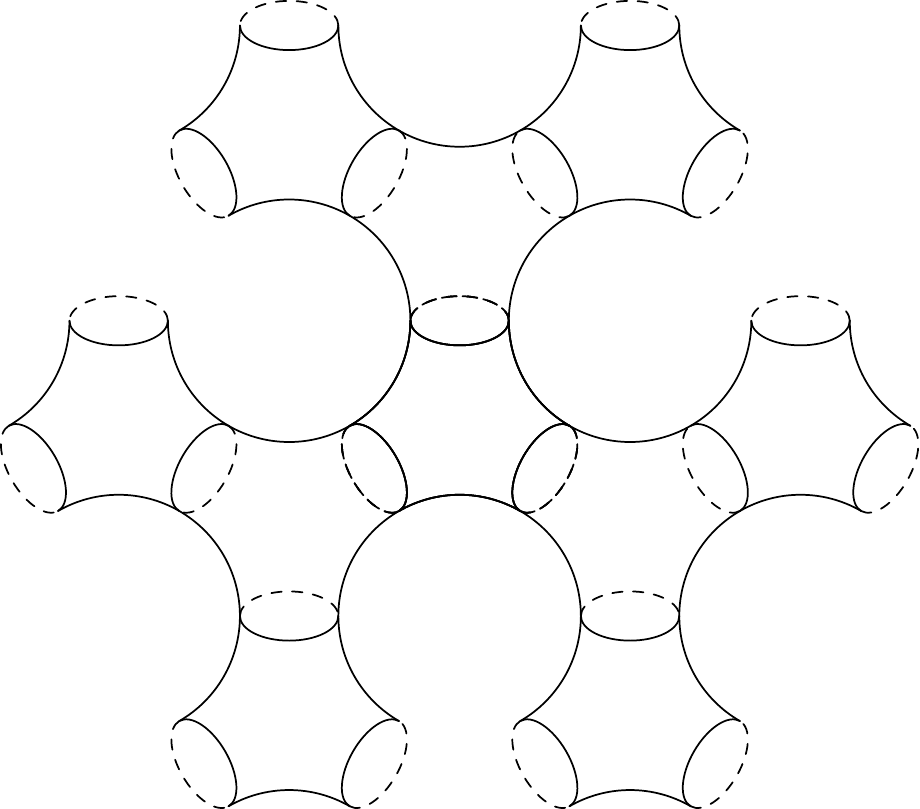}
		 \caption{The sphere with $6\cdot 2^{n}$ boundary components}
		 \label{fig_pants_1}
 \end{figure}

 In $S_g$, in each one-holed torus (glued from a pair of pants) corresponds to a leaf of the tree, there is a unique simple closed curve consists of one seam of the pants. We denote this curve by $\alpha_k$ where $k=1,2,\dots,g$. Now we prove that this curve is the shortest curve in $S_g$. 

 \begin{lemma}
		 The shortest geodesics in $S_g$ consists of $\alpha_k$, $k=1,2,\dots,g$ and therefore systole of $S_g$ is $\arccosh 2$. 
		 \label{lem_sys_r}
 \end{lemma}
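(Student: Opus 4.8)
The plan is to show that every essential simple closed geodesic in $S_g$ has length at least $\arccosh 2$, and that each $\alpha_k$ realizes this value. The length of $\alpha_k$ is immediate: in a right-angled hexagon, a seam joins two of the non-adjacent sides, and the regular right-angled hexagon has all sides equal to a number $s$ with $\cosh s = \cot\alpha\cot\beta$ applied appropriately; gluing two copies of the hexagon across a cuff makes the seam in the leaf torus into a closed geodesic of length $2s$. A direct computation with the right-angled pentagon formula (\ref{for_pentagon}) or hexagon formula (\ref{for_hexagon}) specialized to the regular case gives $2s = \arccosh 2$, so $l_{\alpha_k}(S_g) = \arccosh 2$ for every $k$; this is the same computation used for the standard ``regular'' pairs of pants, so I would just cite the trigonometry and not belabor it.

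Next, the lower bound. Let $\gamma$ be any essential simple closed geodesic in $S_g$. The collection of cuffs $\{c_j\}$ used in the gluing is a pants decomposition of $S_g$; I would argue by cases according to whether $\gamma$ meets some cuff transversally. If $\gamma$ is disjoint from all cuffs, it lies inside a single pair of pants, hence is freely homotopic either to a cuff or, in the leaf case, to the seam-curve $\alpha_k$ (a one-holed torus glued from a pants has the two identified cuffs becoming a single curve, and the only other short closed geodesic is $\alpha_k$); cuffs of a regular pants are longer than $\arccosh 2$ by the hexagon trigonometry, so in this case $l_\gamma \ge \arccosh 2$ with equality only for the $\alpha_k$'s. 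If instead $\gamma$ crosses some cuff $c_j$, then by the Collar Lemma (\cite[Chapter 4]{buser2010geometry}) $\gamma$ must traverse the standard collar about $c_j$, and its length is bounded below by twice the collar half-width $w(c_j) = \arcsinh(1/\sinh(l_{c_j}/2))$. Since $l_{c_j}$ is the (constant) cuff length of the regular pants, this already gives a definite lower bound; I will check that this bound is $\ge \arccosh 2$, which reduces to an inequality between the cuff length and seam length in a regular right-angled hexagon — a one-line trigonometric verification.

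The main obstacle is the middle case: a geodesic $\gamma$ that enters a pair of pants and exits through a different cuff without being short enough for the crude collar bound to finish the job — i.e. controlling arcs that run through a pants from one cuff to another. Here I would use the fact that in a regular right-angled hexagon the distance between any two of the three ``cuff-sides'' is at least the seam length $s$ (orthogonal distance between opposite sides of the hexagon), so any arc crossing a pants from one boundary to another has length $\ge s$, and a geodesic crossing $k$ consecutive cuffs has length $\ge$ (number of pants-crossings)$\cdot s \ge 2s = \arccosh 2$ as soon as it makes at least two crossings; a geodesic making exactly one crossing must return, forcing it around a handle and again picking up length $\ge 2s$. Making this ``distance between boundary sides of a regular hexagon is $\ge s$'' estimate precise via the trirectangle formula (\ref{for_trirect_1}) is the technical heart; everything else is bookkeeping over the pants decomposition. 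Finally, I would assemble the cases to conclude that the minimum is exactly $\arccosh 2$, attained precisely on $\{\alpha_1,\dots,\alpha_g\}$, which is the assertion of the lemma.
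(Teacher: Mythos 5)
Your overall strategy (compute $l(\alpha_k)$ by hexagon trigonometry, then bound every other geodesic from below through the pants decomposition) is the same as the paper's, but the quantitative skeleton of your write-up is wrong, and the two steps you defer as one-line verifications are in fact false. First, the side length $s$ of the regular right-angled hexagon satisfies, by (\ref{for_hexagon}) with all sides equal, $\cosh s=\sinh^2 s\cosh s-\cosh^2 s$, i.e.\ $\cosh s=2$, so $s=\arccosh 2\approx 1.317$. The curve $\alpha_k$ is a \emph{single} seam (one hexagon edge, whose two endpoints get identified when the two free cuffs of a leaf pants are glued), so $l(\alpha_k)=s=\arccosh 2$; it is not a curve of length $2s$, and your claimed identity ``$2s=\arccosh 2$'' is not what the trigonometry gives. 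The cuffs have length $2s=2\arccosh 2$. This miscomputation drives the later bookkeeping: you require ``at least two pants-crossings'' to accumulate $2s$, whereas with the correct value a single traversal of a pants from one cuff to another already costs $s=\arccosh 2$; and your claim that a geodesic crossing the cuffs exactly once ``picks up length $\ge 2s$'' is false as stated, since $\alpha_k$ itself crosses the glued cuff exactly once and has length exactly $s$.

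Second, the collar-lemma step does not close. For a cuff of length $2\arccosh 2$ the standard half-width is $\arcsinh\bigl(1/\sinh(\arccosh 2)\bigr)=\arcsinh(1/\sqrt{3})\approx 0.549$, so the crossing bound is $\approx 1.099<\arccosh 2\approx 1.317$; your verification only looks plausible because of the earlier wrong value of $s$. Indeed no such crude bound can work, because $\alpha_k$ crosses a cuff and realizes the systole, so any estimate valid for all crossing geodesics cannot exceed $\arccosh 2$, and the collar estimate falls strictly short of it. What actually saves the argument is precisely your ``middle case'' estimate, which is the paper's proof: cut a crossing geodesic along the cuffs into arcs, each contained in one pair of pants and running between boundary cuffs (in a leaf pants the two glued cuffs become distinct boundary components after cutting), and each such arc has length at least the distance between two cuffs of the pants, namely the seam length $s=\arccosh 2$; arcs returning to the same cuff are handled via the distance from a cuff to the opposite seam, using (\ref{for_pentagon}), and geodesics disjoint from all cuffs are cuffs, of length $2\arccosh 2$ (also note such a geodesic cannot be $\alpha_k$, which meets the glued cuff, so your ``homotopic to $\alpha_k$'' subcase is misplaced). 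As written, the numerical backbone ($2s=\arccosh 2$, the collar verification, and the ``$\ge 2s$ for one crossing'' claim) is incorrect, so the proof does not go through without reworking it along these lines.
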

 \begin{proof}

		 By definition, the length of $\alpha_k$ is equal to the edge length of the regular right-angled hexagon and hence equal to the distance between two cuffs in the pants consisting of regular right-angled hexagons. 
		 Therefore by (\ref{for_hexagon}), length of $\alpha_k$ is $\arccosh 2$. 

		 Then if a curve in $S_g$ intersects at least three pairs of pants, then this curve is longer than $\alpha_k$ because this curve must pass two cuffs that belongs to one of the three pants. 

		 In a pair of pants consists of two regular right-angled hexagons, the only simple closed geodesics are the cuffs. The cuff length of the pants is exactly twice the length of $\alpha_k$. 

		 If a curve is contained in two neighboring pairs of pants, then it intersects the shared cuff of the two pants and the seam in each pants opposite to the cuff. However, by (\ref{for_pentagon}), distance between the cuff and the seam are larger than the length of $\alpha_k$. 

		 Therefore $\left\{ \alpha_k \right\}_{k=1}^{g}$ is the set of shortest geodesics of $R(n)$.
 \end{proof}

 \begin{remark}
		 By \cite[Corollary 20]{schaller1999systoles}, $S_g$is a regular point of the systole function since its shortest geodesics do not fill.
 \end{remark}

 \subsection{Distance between $S_g$ and $X_g$}
The distance between a surface and $X_g$ is estimated from below by the following lemma:
 \begin{lemma}
		 For a surface $S\in \mathcal M_g$, 
		 let $L>0$. 
		 If for any filling curve set $F$ in which each pair of curves intersect at most once, $F$ contains a curve longer than $L$, then 
		 \begin{equation}
				 d_{\mathcal T}(S,X_g)\ge \frac{1}{4}\log\frac{L}{\sys(S)}.
				 \label{for_dist_x}
		 \end{equation}
		 \label{lem_dist_x}
 \end{lemma}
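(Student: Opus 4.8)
The plan is to argue by contradiction: suppose $d_{\mathcal T}(S,X_g) < \frac{1}{4}\log\frac{L}{\sys(S)}$, so there is a surface $Y\in X_g$ with $d_{\mathcal T}(S,Y) = d < \frac{1}{4}\log\frac{L}{\sys(S)}$. By definition $Y$ has a filling system of shortest geodesics; after passing to a subsystem I may assume this filling set $F_Y$ consists of the systoles of $Y$, and a standard fact (e.g.\ via the collar lemma, as used for $X_g\subset\mathcal M_g^{\ge\varepsilon}$) is that curves realizing the systole pairwise intersect at most once — so $F_Y$ is exactly the kind of filling set the hypothesis talks about. Each curve $\alpha\in F_Y$ has $l_\alpha(Y)=\sys(Y)$.

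Next I would transport the filling set $F_Y$ to $S$ via the marking and control how much its lengths can grow. The tool is Thurston's metric: by Theorem \ref{thm_thurston} and the comparison \eqref{for_thurston_metric}, for every simple closed curve $\alpha$ on $Y$ there is a Lipschitz homeomorphism (or a sequence approaching the extremal one) $Y\to S$ whose logarithmic length distortion on $\alpha$ is at most $d_L(Y,S)\le 2 d_{\mathcal T}(Y,S) = 2d$. Since we want one map handling the whole (finite) set $F_Y$ simultaneously, I instead use the Teichm\"uller map $h:Y\to S$ itself, which is $K$-quasiconformal with $\frac12\log K = d$; a $K$-qc map distorts extremal length by at most a factor $K$, hence hyperbolic length of a simple closed geodesic by at most $K$ up to bounded multiplicative error — more cleanly, $\Ext_\alpha(S)\le K\,\Ext_\alpha(Y)$, and combining with Theorem \ref{thm_maskit} (Maskit's comparison $\frac1\pi l_\alpha\le\Ext_\alpha\le\frac{1}{2\theta}l_\alpha$) one gets $l_\alpha(S)$ bounded in terms of $K\cdot l_\alpha(Y)=K\sys(Y)$. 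So the images of the curves of $F_Y$ form a filling set on $S$ (filling is a topological property, preserved by the homeomorphism) in which each pair still intersects at most once, and every curve has length $\lesssim e^{2d}\sys(Y)$.

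Now I invoke the hypothesis: this filling set on $S$ must contain a curve longer than $L$, so $L < C\,e^{2d}\sys(Y)$ for the relevant comparison constant $C$, while $\sys(Y)\le$ (something controlled); combining with $\sys(S)\le e^{2d}\sys(Y)$-type inequalities in the reverse direction (lengths on $Y$ versus $S$) yields $L < e^{4d}\sys(S)$, i.e.\ $d > \frac14\log\frac{L}{\sys(S)}$, contradicting the assumption. The exponent $4$ (rather than $2$) is exactly what one pays for going through Teichm\"uller distance twice — once to push the filling curves from $Y$ to $S$ and once implicitly in comparing the two systoles — and matches the $\frac12 d_L\le d_{\mathcal T}$ loss in \eqref{for_thurston_metric}.

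The main obstacle I expect is the bookkeeping of multiplicative constants when passing between hyperbolic length, extremal length, and quasiconformal dilatation, and in particular making sure the "intersect at most once" condition genuinely survives: a Lipschitz or quasiconformal homeomorphism preserves geometric intersection number of the curves it carries, so this is really automatic once one is careful that $F_Y$ itself has pairwise intersection at most one — that last point about systoles is the only genuinely geometric input and is where I would cite Schmutz Schaller or argue directly with the collar lemma. If the constants refuse to collapse to a clean $\frac14$, the honest statement is $d_{\mathcal T}(S,X_g)\ge\frac14\log\frac{L}{\sys(S)} - O(1)$, and one checks whether the $O(1)$ can be absorbed; I suspect the author arranges the extremal-length comparisons so that it can.
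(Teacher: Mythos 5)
Your overall architecture matches the paper's: pick a nearest $Y\in X_g$, note that its systoles form a filling set with pairwise intersection at most one, transport that set to $S$ by the (marking) homeomorphism so the hypothesis produces a transported curve of length $>L$, and combine the two directions ($\sys(Y)$ controlled by $\sys(S)$ times the distortion, and $L$ controlled by $\sys(Y)$ times the distortion) to get the exponent $4$. The paper runs exactly this scheme, but through Thurston's Lipschitz metric rather than quasiconformal maps: by Theorem \ref{thm_thurston}, any Lipschitz homeomorphism $S\to S'$ stretches a systole of $S$ to a curve of length at least $\sys(S')$, and any Lipschitz homeomorphism $S'\to S$ sends some systole of $S'$ to a curve of length $>L$, so $e^{d_L(S,S')}\ge \sys(S')/\sys(S)$ and $e^{d_L(S',S)}\ge L/\sys(S')$; since $\max(a,b)\ge\sqrt{ab}$ and $\tfrac12 d_L\le d_{\mathcal T}$ by \eqref{for_thurston_metric}, the clean $\tfrac14$ drops out with no error terms and no contradiction setup.

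The genuine gap is in your quantitative tool. The assertion that a $K$-qc map distorts extremal length by at most $K$ and ``hence hyperbolic length of a simple closed geodesic by at most $K$ up to bounded multiplicative error'' is not correct: Maskit's upper bound in Theorem \ref{thm_maskit} is $\Ext_\alpha\le\frac{1}{2\theta}l_\alpha$ with $\theta$ determined by the collar width of $\alpha$, and the curves you must control are the systoles of $Y\in X_g$, whose length is not bounded by any universal constant (it can be of order $\log g$); their collar width is then of order $e^{-l_\alpha/2}$, so $\frac{1}{2\theta}$ is exponentially large in $l_\alpha$ --- indeed $\Ext_\alpha$ grows roughly like $l_\alpha^2$ for long curves, so no uniform multiplicative comparison between $\Ext_\alpha$ and $l_\alpha$ exists. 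Your route therefore yields at best $d_{\mathcal T}(S,X_g)\ge\frac14\log\frac{L}{\sys(S)}-c\,\sys(Y)$ with $\sys(Y)$ uncontrolled, which is not the stated lemma and is too lossy even for the downstream use in Theorem \ref{thm_random} (where the gain is only of size $\log\log g$). The fix is to replace the extremal-length/Maskit step either by Wolpert's lemma --- a $K$-quasiconformal map satisfies $l_{f(\alpha)}(Y)\le K\,l_\alpha(X)$ for every closed geodesic, with no extra constant (its proof goes through annular covers, whose modulus is $\pi/l_\alpha$, not through $\Ext_\alpha$ on the surface) --- or by the Thurston-metric bounds as in the paper. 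With either replacement your argument closes and becomes essentially the paper's proof; also note that the inequality you need in the ``reverse direction'' is $\sys(Y)\le e^{2d}\sys(S)$ (systole of $S$ pushed forward to $Y$), not $\sys(S)\le e^{2d}\sys(Y)$.
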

 \begin{proof}
		 We let $S\in \mathcal M_g$ is a surface. For any filling curve set $F \subset S$, in which each pair of curves intersect at most once, $F$ contains a curve longer than $L$. 

		 For any $S'\in X_g$, we assume $F'\subset S'$ is the set of shortest geodesics in $S'$. Since $S'\in X_g$, $F'$ is filling in $S'$. 

		 For any Lipschitz homeomorphism $f:S\to S'$ and $g:S'\to S$, we let $\alpha\subset S$ be a shortest geodesic in $S$ and $\beta\subset S'$ be a shortest geodesic with $l_{g(\beta)(S)}>L$. Then by Theorem \ref{thm_thurston}
		 \[
				 \exp(d_L(S,S'))\ge \frac{l_{f(\alpha)}(S')}{l_{\alpha}(S)} \ge \frac{\sys(S')}{\sys(S)}. 
		 \]
		 On the other hand, 
		 \[
				 \exp(d_L(S',S))\ge \frac{l_{g(\beta)}(S)}{l_{\beta}(S')}\ge \frac{L}{\sys(S')}. 
		 \]
		 Then by (\ref{for_thurston_metric}), $d_{\mathcal T}(S,S')\ge \frac{1}{2}d_{ L}(S,S')$ and $d_{\mathcal T}(S,S')\ge \frac{1}{2}d_{ L}(S',S)$. 
		 For any $\sys(S')>0$, 
		 \[
				 \max\left( \frac{\sys(S')}{\sys(S)},\frac{L}{\sys(S')} \right) \ge \sqrt{\frac{L}{\sys(S)}}.
		 \]
		 Therefore 
		 \[
				 d_{\mathcal T}(S,S')\ge \frac{1}{2}\log \sqrt{\frac{L}{\sys(S)}} = \frac{1}{4}\log \frac{L}{\sys(S)}. 
		 \]

 \end{proof}

 Now we begin to estimate the distance between $S_g$ and $X_g$ using Lemma \ref{lem_dist_x}. 

 We let $P_k$, $k=1,\dots,g$ be the one-holed tori corresponds to leaves of the tree $T(m)$. An observation is that $S_g\backslash \left\{ P_k \right\}_{k=1}^{g}$ is a $g$-holed sphere. 

 Immediately we have
 \begin{lemma}
		 In $S_g$, for any filling curve set $F$ in which each pair of curves intersect at most once, any curve in $F$ intersects at least one $P_k$ in $\left\{ P_k \right\}_{k=1}^{g}$. 
		 \label{lem_separate}
 \end{lemma}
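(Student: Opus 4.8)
The plan is to argue by contradiction, using the observation already recorded that $S_g\setminus\bigcup_{k=1}^g P_k$ is a $g$-holed sphere $\Sigma$. Suppose some curve $\gamma\in F$ is disjoint from every $P_k$; then $\gamma$ lies in $\Sigma$. Since $\Sigma$ is planar, $\gamma$ separates $\Sigma$ into two components (apply the Jordan curve theorem inside $S^2\supset\Sigma$). Each one-holed torus $P_k$ is glued to $\Sigma$ along a single boundary circle of $\Sigma$, and that circle, being disjoint from $\gamma$, lies entirely in one of these two components; so reattaching all the $P_k$ does not reconnect $\Sigma\setminus\gamma$, and $S_g\setminus\gamma$ is disconnected. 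Hence $\gamma$ is separating in $S_g$. (Equivalently: each $\partial P_k$ bounds a one-holed torus and is thus null-homologous in $S_g$, and $\gamma$ is homologous in $S_g$ to a sum of the circles $\partial_i\Sigma$, hence null-homologous, hence separating.)

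Next I would upgrade the intersection hypothesis to a disjointness statement. A separating simple closed curve meets any simple closed curve, in minimal position, an even number of times, since a curve crossing $\gamma$ an odd number of times cannot return to its starting side of $\gamma$. The curves of $F$ are shortest geodesics, hence in minimal position, and by hypothesis every pair meets at most once; therefore $\gamma$ meets every other curve of $F$ zero times. Write $F'\assign F\setminus\{\gamma\}$, so that $\gamma$ is disjoint from all of $F'$.

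Finally I would derive a contradiction with the filling hypothesis by an Euler characteristic count. The curve $\gamma$ lies in the interior of a single component $W$ of $S_g\setminus F'$, and the pieces of $S_g\setminus F$ contributed by $W$ are exactly the components of $W\setminus\gamma$; since $F$ fills, each such piece is an open disk. Cutting a surface along a simple closed curve leaves its Euler characteristic unchanged, so $\chi(W)$ equals the number $N\ge 1$ of these disks; hence $W$ is a disk or a sphere. If $W$ is a disk, then $\gamma$ bounds a disk in $S_g$, contradicting that the curves of a filling system are essential. If $W$ is a sphere, then $W$ is closed, so $W=S_g$ because $S_g$ is connected, contradicting $g\ge 3$. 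This contradiction shows that no curve of $F$ can avoid all of the $P_k$, which is the claim of the lemma. The step needing the most care is the second one: it is precisely the evenness of the intersection number of a separating curve that promotes the hypothesis ``intersect at most once'' to ``disjoint'', and this disjointness is what makes the Euler characteristic argument in the last step applicable.
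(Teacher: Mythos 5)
Your proof is correct and follows essentially the same route as the paper: a curve missing every $P_k$ lies in the planar piece $S_g\setminus\bigcup_k P_k$, hence is separating, hence by parity of crossings and the at-most-one-intersection hypothesis is disjoint from the rest of $F$, which contradicts $F$ being filling --- a fact the paper simply asserts and you justify with the Euler characteristic count. One small quibble: the curves of $F$ need not be shortest geodesics (the lemma allows an arbitrary filling set), but this is harmless, since a closed curve crossing a separating curve transversally must do so an even number of times regardless of minimal position.
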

 \begin{proof}
		 If a curve does not intersect any $P_k$ for $k=1,2,\dots,g$, then it is contained in the $g$-holed sphere $S_g\backslash \left\{ P_k \right\}_{k=1}^{g}$, hence is a separating curve. A separating curve cannot intersect any curve once. On the other hand, a curve in a filling set $F$ always intersects other curves in $F$. Therefore this lemma holds. 
 \end{proof}

 \begin{lemma}
		 In $S_g$, for any filling curve set $F$ in which each pair of curves intersect at most once, $F$ contains a curve $\beta$, 
		 \[
				 l_{\beta}(S_g)>{n}\arccosh 2,
		 \]
		 \label{lem_length_beta}
		 where $g=3\cdot2^{n-1}$. 
 \end{lemma}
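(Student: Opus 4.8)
The plan is to combine Lemma \ref{lem_separate} with a lower bound on the length of any arc crossing one of the one-holed tori $P_k$, together with a ``funnel'' estimate that forces a crossing curve to traverse a long chain of pairs of pants near the central vertex $O$. First I would fix a filling curve set $F$ in which each pair of curves meets at most once, and pick any $\beta\in F$. By Lemma \ref{lem_separate}, $\beta$ meets some $P_{k}$, so $\beta$ is non-separating; in particular $\beta$ is not contained in the $g$-holed sphere $S_g\setminus\{P_k\}$, and (by the same argument, applied after cutting along $\{\alpha_k\}$ or along the cuffs of the $P_k$) $\beta$ cannot be homotopic into a single $P_k$ either, since the $\alpha_k$ are the only simple closed geodesics inside a $P_k$ and they are separating-once-cut but actually non-fillable — more precisely, a curve contained in a single $P_k$ together with the rest of $F$ cannot be filling on $S_g$. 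Hence $\beta$ enters and exits at least one $P_k$ through its single cuff, i.e.\ $\beta$ crosses the cuff $c_k$ of some $P_k$ at least twice, hence (being essential) crosses $c_k$ and runs through the tree-path structure of $S_g$.

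The core geometric input I would isolate as a sublemma: in the tree $T(m)$, the vertex $O$ has distance at least $n$ to every leaf, so the pair of pants $P_O$ corresponding to $O$ separates $S_g$ into three pieces, each of which, as a subsurface, contains a ``depth-$n$ binary tree of pants'' before reaching any $P_k$. Therefore any path in $S_g$ from a point of $P_{k}$ to a point of $P_{k'}$ with $k,k'$ in different branches at $O$ must cross the $n$ cuffs lying on the tree-geodesic from the leaf $k$ down to $O$ (or symmetrically down the other branch). Since $\beta$ is non-separating and meets the $\{P_k\}$ only in total finitely many times, a homological/combinatorial argument shows $\beta$ must connect two tori $P_k,P_{k'}$ lying in distinct branches at some vertex of degree $3$ whose distance to the nearer leaf is $\ge n$ — in the worst case this is $O$ itself — so $\beta$ crosses at least $n$ of the pants-cuffs in succession. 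Each such cuff crossing contributes at least the distance between two cuffs in a pair of pants built from regular right-angled hexagons, which by \eqref{for_pentagon} (the pentagon formula, as used in the proof of Lemma \ref{lem_sys_r}) is at least $\arccosh 2$; consecutive crossings happen in disjoint pants, so the contributions add. This yields $l_\beta(S_g) > n\arccosh 2$.

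Concretely I would structure the write-up as: (i) recall from Lemma \ref{lem_sys_r}'s proof that inside each pair of pants the seam-to-cuff and cuff-to-cuff distances are $\ge \arccosh 2$; (ii) observe that $S_g$ cut along all $g$ cuffs $c_k$ of the $P_k$ is a pants-decomposed $g$-holed sphere whose dual graph is the trivalent tree $T(m)$ with the leaves removed; (iii) prove the sublemma that a non-separating $\beta$, meeting the $P_k$'s, must cross a chain of at least $n$ consecutive pants-cuffs along a tree-geodesic emanating from $O$ — this is where one uses that $O$ is at tree-distance $\ge n$ from every leaf and that a separating curve cannot lie in a filling set meeting each curve once; (iv) sum the per-pants lower bounds to conclude $l_\beta(S_g)>n\arccosh 2$.

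The main obstacle is step (iii): turning ``non-separating and touching some $P_k$'' into ``crosses $n$ consecutive cuffs''. The subtlety is that $\beta$ need not go straight down to $O$ — it could, a priori, connect two tori in the same branch using far fewer cuffs, or it could wind around. The resolution is that if $\beta$ never crosses the cuff separating a branch from $O$, then $\beta$ stays inside one branch, which is itself a subsurface of a $g'$-holed sphere with $g' < g$ tori attached; iterating (or arguing by the location of the \emph{highest} vertex of $T(m)$ whose removal separates the tori that $\beta$ visits) forces the relevant vertex to have depth $\ge n$ from its nearest leaf, because the branches are full binary trees of depth $n$ and $\beta$, being non-separating, must visit tori in at least two children-subtrees of that vertex, each of which has depth $\ge n-1$ below it to a leaf; one then also picks up the two cuffs on the other side and the edge to $O$. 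Care with this counting (making sure one genuinely gets $n$ and not $n-1$, and that the crossings occur in pairwise-disjoint pants so the lengths add) is the delicate part; everything else is a direct appeal to \eqref{for_pentagon} and Lemma \ref{lem_separate}.
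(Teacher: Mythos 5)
Your proposal proves the wrong statement and, along the way, relies on two claims that are false. The lemma only asserts that the filling set $F$ \emph{contains} one long curve, but you fix an \emph{arbitrary} $\beta\in F$ and try to show it is long; that stronger statement fails, since $F$ may perfectly well contain short curves supported in a single one-holed torus $P_k$ --- for instance the systole $\alpha_k$ itself (length $\arccosh 2$), or the curve dual to $\alpha_k$ inside $P_k$, both of which are non-separating and can intersect the other curves of $F$ at most once. Your attempt to exclude this ("a curve contained in a single $P_k$ together with the rest of $F$ cannot be filling") is unjustified: filling is a property of the whole set, and the rest of $F$ can do the filling elsewhere, so nothing prevents such a short curve from belonging to $F$. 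For the same reason your key sublemma (iii) is false: being non-separating and meeting some $P_k$ does not force a curve to visit tori in two different subtrees, let alone to cross $n$ consecutive cuffs --- the dual curve inside one $P_k$ is a counterexample of bounded length. So the "funnel" argument cannot be repaired as stated.

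The missing idea, which is how the paper argues, is to use the filling hypothesis to \emph{choose} the right $\beta$: the construction of $S_g$ gives a natural pants decomposition, and a filling curve set must intersect every pair of pants of this decomposition (otherwise that pants would lie in a complementary component, which would then not be a disk). In particular some $\beta\in F$ passes through the pants $P_O$ corresponding to the central vertex $O$. By Lemma \ref{lem_separate}, this same $\beta$ also meets some one-holed torus $P_k$ corresponding to a leaf. Since the tree-distance from $O$ to every leaf is at least $n$, and the distance between two cuffs of each pair of pants (equivalently the seam length, by (\ref{for_pentagon}) as in the proof of Lemma \ref{lem_sys_r}) is $\arccosh 2$, the distance in $S_g$ between $P_O$ and any leaf torus is at least $n\arccosh 2$, whence $l_\beta(S_g)>n\arccosh 2$. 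Your per-pants length accounting in step (i) is fine; what is missing is this selection of $\beta$ via the "filling sets meet every pants" observation, without which no lower bound on an arbitrary element of $F$ can hold.
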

 \begin{proof}
		 The construction of $S_g$ gives a natural pants decomposition on $S_g$. A filling curve set must intersect every pair of pants in this decomposition because filling curve sets cut the surface into disks. 

		 For the pants corresponds to the center vertex $O$ shown in Figure \ref{fig_pants_2}, we let $\beta$ be a curve in $F$ passing this pair of pants. 
		 Then by Lemma \ref{lem_separate}, $\beta$ intersectssome one-holed sphere corresponding to a leaf in the tree $T(m)$. 
		 Distance between the vertex $O$ and any leaf of the tree is at least $n$. Then by the construction of $S_g$, distance between the corresponding two pairs of pants is at least $n\arccosh 2$, where $\arccosh 2$ is the length of seams of the pants constructing $S_g$. 

		 Therefore
				 $l_{\beta}(S_g)>{n}\arccosh 2$.
 \end{proof}

 By Lemma \ref{lem_length_beta} and Lemma \ref{lem_dist_x} immediately we have
 \begin{proposition}
		 When $g= 3\cdot 2^{n-1}$ for any positive integer $n$, distance between $S_g$ and $X_g$ is larger than
		 \[
				 d_{\mathcal T}(S_g, X_g) > \frac{1}{4}\log n. 
		 \]
		 \label{prop_hole}
 \end{proposition}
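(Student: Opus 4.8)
The proof of Proposition \ref{prop_hole} is now essentially a bookkeeping exercise that combines the three lemmas already established. The plan is to feed the length bound from Lemma \ref{lem_length_beta} into the distance estimate of Lemma \ref{lem_dist_x}, using the systole computation of Lemma \ref{lem_sys_r}.

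First I would recall the hypothesis of Lemma \ref{lem_dist_x}: we need a number $L>0$ such that every filling curve set $F$ on $S_g$ (in which each pair of curves meets at most once) contains a curve longer than $L$. By Lemma \ref{lem_length_beta}, such an $F$ always contains a curve $\beta$ with $l_\beta(S_g) > n\arccosh 2$, so we may take $L = n\arccosh 2$. Second, by Lemma \ref{lem_sys_r} the systole of $S_g$ equals $\arccosh 2$. Plugging these two values into (\ref{for_dist_x}) gives
\[
d_{\mathcal T}(S_g, X_g) \ge \frac{1}{4}\log\frac{L}{\sys(S_g)} = \frac{1}{4}\log\frac{n\arccosh 2}{\arccosh 2} = \frac{1}{4}\log n,
\]
and since the bound in Lemma \ref{lem_length_beta} is strict ($l_\beta > n\arccosh 2$), the inequality is strict, yielding $d_{\mathcal T}(S_g, X_g) > \frac{1}{4}\log n$ as claimed.

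I do not anticipate any real obstacle here — the work has all been done in the preceding lemmas. The only point requiring a modicum of care is confirming that the strictness propagates correctly: Lemma \ref{lem_dist_x} is stated with a non-strict hypothesis ``$F$ contains a curve longer than $L$'' and a non-strict conclusion, so one should note that if every such $F$ contains a curve strictly longer than $n\arccosh 2$, then for any $L' < n\arccosh 2$ the hypothesis holds with $L'$, giving $d_{\mathcal T}(S_g,X_g) \ge \frac14\log(L'/\arccosh 2)$ for all such $L'$, and taking the supremum over $L' \to n\arccosh 2$ yields the strict bound $d_{\mathcal T}(S_g, X_g) > \frac14 \log n$. (Alternatively one simply observes the chain of inequalities already contains a strict step.) This completes the proof.
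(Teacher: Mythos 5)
Your proof is correct and is exactly the paper's argument: the paper deduces Proposition \ref{prop_hole} immediately from Lemma \ref{lem_length_beta} (take $L=n\arccosh 2$) and Lemma \ref{lem_dist_x}, with $\sys(S_g)=\arccosh 2$ supplied by Lemma \ref{lem_sys_r}. One minor caveat: your supremum argument over $L'<n\arccosh 2$ only yields the non-strict bound $d_{\mathcal T}(S_g,X_g)\ge\frac{1}{4}\log n$ (a supremum of strict bounds need not be strict), but since the paper itself passes from the non-strict conclusion of Lemma \ref{lem_dist_x} to the strict statement without comment, your write-up is no less rigorous than the source.
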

 \subsection{Construction in general genus}
 We have proved Theorem \ref{thm_hole} when $g= 3\cdot 2^{n-1}$. Now we construct $S_g$ when $3\cdot 2^{n-1}<g<3\cdot 2^{n}$. 

 When $3\cdot 2^{n-1}<g<3\cdot 2^{n}$, we take a tree $T$ with $g$ leaves, such that $T(3(2^{n}-1)+1)\subset T\subset T(3(2^{n+1}-1)+1)$. 
 By the embedding $T(3(2^{n}-1)+1)\to T$, we define the vertex of $O$ in $T$ to be the image of vertex $O$ in $T(3(2^{n}-1)+1)$. 
 Then in the tree $T$ distance from $O$ to any leaf of $T$ is larger than $n$. 

 Similarly to the construction in the beginning of this section we can construct a genus $g$ surface $S_g$ from the tree $T$. By Lemma \ref{lem_dist_x}, distance between $S_g$ and $X_g$ is larger than $\frac{1}{4}\log n$. Since $g< 3\cdot 2^n$, we have 
 \begin{theorem}
		 For any $g\ge 3$, the distance from the surface $S_g$ to the space $X_g$ is larger than 
		 \[
				 d_{\mathcal{T}}(S_g,X_g)> \frac{1}{4} \log\left( \log\frac{g} - \log 12 \right).
		 \]
		 \label{thm_hole}

 \end{theorem}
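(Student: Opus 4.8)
The plan is to reduce the general-genus statement to the already-established case $g = 3 \cdot 2^{n-1}$ (Proposition \ref{prop_hole}) by a monotonicity/embedding argument on trees, and then to convert the bound $\frac{1}{4}\log n$ into the advertised form $\frac{1}{4}\log(\log g - \log 12)$ by choosing $n$ optimally relative to $g$. Concretely, given $g$ with $3\cdot 2^{n-1} < g < 3\cdot 2^{n}$, I first want to produce a trivalent tree $T$ with exactly $g$ leaves that sits between two "standard" trees: $T(3(2^n-1)+1) \subset T \subset T(3(2^{n+1}-1)+1)$. This is possible because one can grow the smaller standard tree leaf-by-leaf (each step replaces a leaf by an internal vertex with two new leaves, increasing the leaf count by exactly one), and after at most $3\cdot 2^n$ such steps one lands inside the larger standard tree; stopping when the leaf count first reaches $g$ gives the desired $T$. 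The embedding $T(3(2^n-1)+1) \hookrightarrow T$ lets me designate the image of the central vertex $O$ as the distinguished vertex of $T$, and since distances only grow under such leaf-expansions, the distance in $T$ from $O$ to every leaf is still at least $n$.

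Next I would rerun the argument of Lemmas \ref{lem_separate}, \ref{lem_length_beta}, and \ref{lem_dist_x} verbatim for this $S_g$ built from $T$. The key structural facts used are: (i) $S_g$ minus the $g$ one-holed tori attached at the leaves is a planar (genus-$0$) surface, so every curve in a once-intersecting filling set must enter some leaf torus (Lemma \ref{lem_separate}); (ii) a filling set must meet the pair of pants corresponding to $O$, and such a curve therefore travels from $O$ out to some leaf, hence has length at least $n\cdot\arccosh 2$ (Lemma \ref{lem_length_beta}); (iii) Lemma \ref{lem_dist_x} with $L = n\arccosh 2$ and $\sys(S_g) = \arccosh 2$ gives $d_{\mathcal T}(S_g, X_g) \ge \frac14 \log n$. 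I should remark that Lemma \ref{lem_sys_r} (systole equals $\arccosh 2$, realized exactly by the $g$ seam curves $\alpha_k$) also carries over unchanged, since its proof only uses the local pants-and-hexagon geometry and the fact that any curve leaving two pants is already too long — none of which depends on the precise shape of $T$.

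Finally I convert the genus-dependence. From $g < 3 \cdot 2^{n}$ we get $2^n > g/3$, i.e. $n > \log_2(g/3) = \frac{1}{\log 2}\log(g/3)$; but to match the stated constant it is cleaner to use the crude bound $n > \log(g/3) = \log g - \log 3$ when working with the natural logarithm, except that the claimed bound has $\log 12 = \log 3 + \log 4$ rather than $\log 3$. The extra factor of $4$ is exactly the slack needed to pass from $2^n$ to $e^n$: since $g < 3\cdot 2^n < 3\cdot 2^{n} $ and one wants a clean inequality $n > \log g - \log 12$, one checks $e^{n} \cdot 12 > e^n \cdot 2^2 \cdot 3 \ge 2^n\cdot 3 \cdot (\text{something} \ge 1)$... — more precisely, $\log 12 = \log(3\cdot 4) $ and $2^n \le e^n$ would only give $\log 3$; instead one notes the construction actually forces $g \le 3\cdot 2^n$ with room to spare at the low end ($g > 3\cdot 2^{n-1}$ is not what we use; we use the upper side), and a short estimate $n > \log g - \log 12$ follows from $3 \cdot 2^{n} \cdot$-type bookkeeping. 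I expect this last bookkeeping — getting precisely the constant $K = \log 12$ rather than some nearby constant — to be the only fiddly point; the geometric and tree-combinatorial content is entirely inherited from the special case. Once $n > \log g - \log 12$ is in hand, Proposition \ref{prop_hole} (applied to this $S_g$) yields $d_{\mathcal T}(S_g, X_g) > \frac14 \log n > \frac14 \log(\log g - \log 12)$, which is the assertion of Theorem \ref{thm_hole}.
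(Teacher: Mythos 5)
Your proposal follows essentially the same route as the paper: extend the special case $g=3\cdot 2^{n-1}$ by growing the tree $T(3(2^n-1)+1)$ leaf-by-leaf to a trivalent tree with $g$ leaves inside $T(3(2^{n+1}-1)+1)$, keep the distinguished vertex $O$ at distance at least $n$ from every leaf, rerun Lemmas \ref{lem_sys_r}, \ref{lem_separate}, \ref{lem_length_beta} and \ref{lem_dist_x} to get $d_{\mathcal T}(S_g,X_g)>\frac14\log n$, and then translate $n$ into $g$ via $g<3\cdot 2^n$. The only place you hesitate, the constant $\log 12$, is not actually an obstacle: your bound $n>\log g-\log 3$ (using $\log 2<1$) is stronger than $n>\log g-\log 12$, so the stated inequality follows a fortiori and your argument is complete.
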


 \section{Sparseness of $X_g$}
 \label{sec_random}
 \subsection{Two theorems on random surface}

 We list two theorems of random surfaces we need for the proof of Theorem \ref{thm_random}. 

 \begin{theorem}[\cite{mirzakhani2019lengths}, Theorem 2.8]
		 There exists $A,B>0$ so that for any sequence $\{c_g\}$ of positive numbers with $c_g< A\log g$, we have
		 \[
				 P_{WP}( S\in \mathcal M_g| \sys(S)>c_g ) < Bc_ge^{-c_g}. 
		 \]
		 \label{thm_mp}
 \end{theorem}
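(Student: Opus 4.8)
The plan is to turn the systole bound into a counting statement and then run the second moment method, evaluating both moments with Mirzakhani's integration formula. Since enlarging a family of curves can only decrease the systole, it suffices to work with one analytically convenient family: for $S\in\mathcal M_g$ let $N(S)$ be the number of non-separating simple closed geodesics on $S$ of length $<c_g$. Then $\{S:\sys(S)>c_g\}\subseteq\{S:N(S)=0\}$, so it is enough to show $P_{WP}(N=0)\le Bc_ge^{-c_g}$.

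First I would compute the mean. Applying Mirzakhani's integration formula to the mapping class group orbit of a non-separating simple closed curve gives
\[
\mathbb{E}_{WP}[N]=\frac{1}{2V_g}\int_0^{c_g}t\,V_{g-1,2}(t,t)\,\mathrm dt,
\]
where $V_g=V_{g,0}$, $V_{g-1,2}$ is the Weil--Petersson volume polynomial of $\mathcal M_{g-1,2}$, and the factor $\tfrac12$ comes from the symmetry exchanging the two new boundary components of the cut surface. Mirzakhani's estimates on the ratios $V_{g-1,2}(t,t)/V_g$, which are uniform in $t$ as long as $t=O(\log g)$, then yield
\[
\mathbb{E}_{WP}[N]=\bigl(1+o(1)\bigr)\int_0^{c_g}\frac{2\sinh^2(t/2)}{t}\,\mathrm dt\ \ge\ c_1\,\frac{e^{c_g}}{c_g}
\]
for some $c_1>0$ and all large $g$; this is exactly where the hypothesis $c_g<A\log g$ is used, ensuring the volume error term does not overwhelm the main term.

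Next I would bound the second moment by writing $\mathbb{E}[N^2]=\mathbb{E}[N]+\mathbb{E}[N(N-1)]$, where $\mathbb{E}[N(N-1)]$ is the expected number of ordered pairs of distinct non-separating simple closed geodesics, each of length $<c_g$. I would split this count by the topological type of the unordered pair — whether the two curves are disjoint, and in that case the homeomorphism type of the surface cut along them — and apply Mirzakhani's integration formula for multicurves to each type. The dominant contribution comes from disjoint pairs whose complement is connected of the expected topological type, and a volume-ratio estimate parallel to the one above shows it equals $(1+o(1))\mathbb{E}[N]^2$; the remaining types (intersecting pairs, or disjoint pairs with atypical complement) are of strictly smaller order. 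The quantitative point, and the technical heart of the argument, is to carry the error terms precisely enough to conclude $\mathbb{E}[N(N-1)]\le\mathbb{E}[N]^2+O(\mathbb{E}[N])$, equivalently $\mathrm{Var}(N)=O(\mathbb{E}[N])$, uniformly throughout the range $c_g<A\log g$.

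Finally, by Chebyshev's inequality,
\[
P_{WP}(N=0)\le P_{WP}\bigl(|N-\mathbb{E}[N]|\ge\mathbb{E}[N]\bigr)\le\frac{\mathrm{Var}(N)}{\mathbb{E}[N]^2}=O\!\left(\frac{1}{\mathbb{E}[N]}\right)\le B\,c_ge^{-c_g},
\]
and since $\{\sys>c_g\}\subseteq\{N=0\}$ this proves the theorem. The main obstacle is the variance estimate: the bookkeeping over the finitely many topological types of curve pairs, together with the uniform asymptotics for the volume polynomials $V_{g',n}(x)$ that remain valid across the entire range $c_g<A\log g$.
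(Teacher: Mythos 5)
This statement is not proved in the paper at all: it is imported verbatim from Mirzakhani--Petri \cite{mirzakhani2019lengths}, so there is no internal proof to compare with, and the relevant comparison is with the proof in that source. Your outline is essentially that argument: dominate $P_{WP}(\sys>c_g)$ by $P_{WP}(N=0)$ for the count $N$ of non-separating simple closed geodesics of length $<c_g$, compute $\mathbb{E}[N]$ via Mirzakhani's integration formula together with the uniform volume asymptotics for $V_{g-1,2}(t,t)/V_g$ in the range $t=O(\log g)$, prove $\mathrm{Var}(N)=O(\mathbb{E}[N])$ by a second-moment analysis over topological types of pairs, and conclude with Chebyshev; this is exactly the moment-method route of the cited paper. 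Two caveats are worth flagging. First, for pairs of curves that intersect, the integration formula for multicurves does not apply directly; the standard fix (and the one used in the source) is to pass to the subsurface filled by the pair and bound the expected number of such subsurfaces with short boundary, so this case needs its own estimate rather than being folded into the multicurve computation. Second, your lower bound $\mathbb{E}[N]\ge c_1 e^{c_g}/c_g$, and indeed the stated inequality itself, can only hold once $c_g$ is bounded away from $0$ (for $c_g\to 0$ the left side of the theorem tends to $1$ while the right side tends to $0$); this is harmless for the way the theorem is used in this paper, where $c_g$ is of order $\log\log g$, but the uniformity claim should be stated for, say, $1\le c_g<A\log g$.
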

 In a hyperbolic surface, the {\em half collar} of a simple closed geodesic $\gamma$ with width $w$ is an embeded cylinder in the surface. One of the boundary curves of the cylinder is the geodesic $\gamma$ and this cylinder consists of points with distance at most $w$ to $\gamma$ in one side of $\gamma$. 
 \begin{theorem}[\cite{nie2020large} Theorem 1 and Theorem 2]
		 For any $\varepsilon>0$, consider the following condition:

		 (a) There is a simple closed curve $\gamma$ in $S$, it has a half collar with width $\frac{1}{2}\log g-(\frac{3}{2}+\varepsilon)\log\log g$. 

		 (b) Length of the curve $\gamma$ in (a) is larger than $2\log g -5 \log \log g$. 

		 Tehn we have 
		 \[
				 P_{WP}(S\in \mathcal M_g| S\text{ satisfies (a) and (b)}) \to 1
		 \]
		 as $g\to \infty$. 
		 \label{thm_nwx}
 \end{theorem}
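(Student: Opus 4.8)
The plan is to establish this by a first- and second-moment computation using Mirzakhani's integration formula together with the Mirzakhani--Zograf volume asymptotics, rather than quoting \cite{nie2020large} as a black box. Fix $\varepsilon>0$ and put $L=2\log g-5\log\log g$ and $w=\tfrac12\log g-(\tfrac32+\varepsilon)\log\log g$. For $S\in\mathcal M_g$ let $N(S)$ be the number of non-separating simple closed geodesics $\gamma\subset S$ with $\ell_\gamma(S)\in[L,L+1]$, and let $N_{\mathrm{bad}}(S)$ be the number of these $\gamma$ that carry an embedded half-collar of width $w$ on \emph{neither} side. If $N(S)>N_{\mathrm{bad}}(S)$ then $S$ has a non-separating $\gamma$ with $\ell_\gamma>L=2\log g-5\log\log g$ and a half-collar of width $w$, so $S$ satisfies (a) and (b); thus it suffices to show $P_{WP}(N>N_{\mathrm{bad}})\to1$. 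By Markov's and Chebyshev's inequalities, $P_{WP}(N\le N_{\mathrm{bad}})\le 2\,\mathbb E[N_{\mathrm{bad}}]/\mathbb E[N]+4\,\mathrm{Var}(N)/\mathbb E[N]^2$, so it is enough to prove $\mathbb E[N]\to\infty$, $\mathrm{Var}(N)=o(\mathbb E[N]^2)$, and $\mathbb E[N_{\mathrm{bad}}]=o(\mathbb E[N])$.

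First I would handle $N$. Mirzakhani's integration formula gives $\mathbb E[N]=\frac{1}{2V_g}\int_L^{L+1}\ell\,V_{g-1,2}(\ell,\ell)\,d\ell$. Using $V_{g-1,2}(\ell,\ell)=\Theta\!\big((\sinh(\ell/2)/(\ell/2))^2\big)\,V_{g-1,2}$ together with $V_{g-1,2}/V_g=\Theta(1)$ (the two moduli spaces share the same leading Mirzakhani--Zograf asymptotics), one gets $\mathbb E[N]=g^{2-o(1)}$, since $e^{L}=g^2(\log g)^{-5}$ dominates the polynomial corrections. For the variance, Mirzakhani's integration formula for ordered pairs of disjoint non-separating simple closed curves, combined with the asymptotics of $V_{g-2,4}$ and the subleading contribution of pairs that intersect, yields $\mathbb E[N^2]=(1+o(1))\mathbb E[N]^2$; this is by now a routine computation within the Mirzakhani--Petri / Mirzakhani--Zograf toolkit. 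Hence $N=g^{2-o(1)}$ with probability tending to $1$.

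The crux is $\mathbb E[N_{\mathrm{bad}}]=o(\mathbb E[N])$. Cutting $S$ along $\gamma$ produces $X\in\mathcal M_{g-1,2}(\ell,\ell)$ whose two boundary geodesics $\beta_1,\beta_2$ are the two sides of $\gamma$; the width-$w$ half-collar on the $\beta_i$-side is embedded unless there is a geodesic arc of length less than $2w$ from $\beta_i$ to $\beta_1\cup\beta_2$ running on that side. Thus by Mirzakhani's integration formula applied on $\mathcal M_{g-1,2}(\ell,\ell)$ to such arcs, $\mathrm{Vol}(\{\text{bad}\}\cap\mathcal M_{g-1,2}(\ell,\ell))\le\int_0^{2w} m(t)\,V_{\mathrm{cut}}(\dots)\,dt$, where $m(t)$ is the relevant (at most exponentially growing) arc-counting density and $V_{\mathrm{cut}}$ is a Weil--Petersson volume of a surface obtained by cutting along such an arc. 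Each topological type of such an arc drops the complexity index $2g-3+n$ of the moduli space by at least $1$ and changes the boundary lengths by $O(t)$, so $V_{\mathrm{cut}}(\dots)/V_{g-1,2}(\ell,\ell)=O\!\big(g^{-1}e^{O(t)}\big)$. Integrating, the bad fraction of $\mathcal M_{g-1,2}(\ell,\ell)$ is $O\!\big(g^{-1}e^{O(w)}\,\mathrm{poly}(\log g)\big)=O\!\big((\log g)^{-(1+2\varepsilon)}\big)$ once the polynomial in $\log g$ is absorbed into the slack produced by $\varepsilon$ and by the constants $\tfrac32$ and $5$; this is exactly what forces the width $\tfrac12\log g$. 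Consequently $\mathbb E[N_{\mathrm{bad}}]=O\!\big((\log g)^{-(1+2\varepsilon)}\big)\,\mathbb E[N]=o(\mathbb E[N])$, and the theorem follows.

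The main obstacle is making the bad-fraction estimate rigorous: one must enumerate the topological types of short returning arcs (arc from $\beta_1$ to itself versus from $\beta_1$ to $\beta_2$, separating versus non-separating, connected versus disconnected cut surface) and verify in each case that the complexity index drops by at least $1$; and one must control the Mirzakhani--Zograf error terms and the $\sinh(\cdot/2)$ boundary-length factors when boundary lengths are as large as $\sim 2\log g$, so that the claimed ``$e^{O(t)}$'' and ``$\mathrm{poly}(\log g)$'' are genuinely of the stated order and the final exponent is negative. The variance bound for $N$ and the precise form of the arc-counting kernel $m(t)$ are secondary technical points that lie within existing technology.
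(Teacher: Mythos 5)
First, a point of comparison: the paper does not prove this statement at all --- it is imported verbatim (as Theorem \ref{thm_nwx}) from Nie--Wu--Xue \cite{nie2020large}, so there is no internal proof to measure you against. Your moment-method plan (Mirzakhani's integration formula plus Mirzakhani--Zograf-type volume asymptotics, first moment for existence, second moment for concentration, and a ``bad-arc'' first moment for the half-collar) is exactly the toolkit used in the source, and your orders of magnitude are right: $e^{L}=g^{2}(\log g)^{-5}$ gives $\mathbb{E}[N]=g^{2-o(1)}$, and $g^{-1}e^{2w}=(\log g)^{-3-2\varepsilon}$ is precisely why the threshold width is $\tfrac12\log g-O(\log\log g)$. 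The difference in route is that you work with non-separating curves in a length window $[L,L+1]$, whereas \cite{nie2020large} produces a \emph{separating} geodesic (the boundary of a one-holed-torus configuration) of length about $2\log g-O(\log\log g)$ with a wide half-collar. Your version does prove the statement as literally quoted, but be aware that the quoted statement suppresses the separating property, which the author silently uses later: in the proof of Theorem \ref{thm_random} the curve $\gamma$ is called separating so that every curve of a filling set crossing it contributes an excursion of length at least $2w$; with a non-separating $\gamma$ crossed once you only get length at least $w$, so the downstream constants would have to be rechecked (they still work, but this is a genuine difference from what is being cited).

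As a proof, though, your proposal has substantive gaps at exactly the places you label ``routine'' or ``within existing technology,'' and those places are the actual content of \cite{nie2020large}. (i) The first-moment lower bound needs a \emph{two-sided} uniform estimate $V_{g-1,2}(\ell,\ell)\asymp V_{g-1,2}\bigl(\tfrac{\sinh(\ell/2)}{\ell/2}\bigr)^{2}$ valid for $\ell\sim 2\log g$; the upper bound is classical, the uniform lower bound in this range is not free. (ii) The variance bound $\mathbb{E}[N^{2}]=(1+o(1))\mathbb{E}[N]^{2}$ at length scale $2\log g$ requires controlling intersecting pairs via all topological types of filled subsurfaces with total boundary up to $\approx 8\log g$, where the sinh factors are polynomial in $g$ and only the complexity drop in $V_{g',n'}/V_g$ saves you; this is a real computation, not a citation to Mirzakhani--Petri (whose estimates are for bounded lengths). (iii) The bad-fraction step has no off-the-shelf ``Mirzakhani integration formula for arcs''; you must either unfold returning orthogeodesic arcs by hand or cut along them and sum over the (several) topological types, verifying in each case the $O(g^{-1})$ volume-ratio gain and that the kernel really is $e^{t+O(\log t)}$ rather than something carrying an extra $e^{\ell/2}$, since the final exponent $-(1+2\varepsilon)$ has little room to spare. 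None of these steps looks fatally flawed --- the heuristic arithmetic checks out --- but as written the proposal is a program whose decisive estimates are asserted rather than proved, i.e.\ it reconstructs the statement only modulo the main technical work of the paper it would replace.
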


 \subsection{The sparseness of $X_g$}

 Now we are ready to prove Theorem \ref{thm_random}. 

\begin{theorem}
		For the probability of a point in $\mathcal M_g$ whose Teichm\"uller distance to $X_g$ is smaller than $\frac{1}{5}\log \log g$  
		\[
		P_{WP}(S|d_{\mathcal T}(S,X_g)<\frac{1}{5}\log\log g ) \to 0		\]
		as $g\to \infty$. 

		\label{thm_random}
\end{theorem}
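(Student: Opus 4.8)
The plan is to combine the two random-surface inputs from Theorem \ref{thm_mp} and Theorem \ref{thm_nwx} with the distance lower bound of Lemma \ref{lem_dist_x}, so that for a point $S$ outside a vanishing-probability bad set we can certify $d_{\mathcal T}(S,X_g)\ge \frac{1}{5}\log\log g$. Concretely, I would fix a small $\varepsilon>0$ (any $\varepsilon<\tfrac12$ will do) and define two exceptional sets: $\mathcal B_1 = \{S\in\mathcal M_g : \sys(S)> c_g\}$ with $c_g = \log\log g$ (which is $<A\log g$ for large $g$, so Theorem \ref{thm_mp} applies and gives $P_{WP}(\mathcal B_1)< B(\log\log g)(\log g)^{-1}\to 0$), and $\mathcal B_2 = \{S : S$ does not satisfy conditions (a) and (b) of Theorem \ref{thm_nwx}$\}$, which has $P_{WP}(\mathcal B_2)\to 0$. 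Then $P_{WP}(\mathcal B_1\cup\mathcal B_2)\to 0$, and it suffices to show every $S\notin \mathcal B_1\cup\mathcal B_2$ has $d_{\mathcal T}(S,X_g)\ge \frac15\log\log g$.

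So I would take an arbitrary $S\notin\mathcal B_1\cup\mathcal B_2$. Since $S\notin\mathcal B_2$, there is a simple closed geodesic $\gamma$ in $S$ with a half collar of width $w=\frac12\log g-(\frac32+\varepsilon)\log\log g$. The next step, which I expect to be the crux of the argument, is to extract from the wide half collar a lower bound on the length of any curve in a filling, pairwise-at-most-once-intersecting curve set $F$. The idea: a filling curve set must be, in particular, non-peripheral and must cross $\gamma$'s collar region in an essential way — more precisely, some curve $\beta\in F$ must enter the half collar and either cross $\gamma$ or wind inside it, because otherwise $F$ would be disjoint from $\gamma$ or intersect it more than once in a way incompatible with filling plus the pairwise-$\le 1$ condition; and a curve that essentially crosses a half collar of width $w$ has length at least $2w$ (twice the collar width) or at least comparable to $w$. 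I would need to state this carefully: one wants that for \emph{every} such $F$, some curve in $F$ has length $>L$ for $L$ on the order of $\log g$. Presumably the cleanest route is to argue that a filling set cannot be disjoint from $\gamma$ (a curve disjoint from everything in $F$ cannot coexist with $F$ being filling — this is the Lemma \ref{lem_separate}-style observation), so some $\beta\in F$ meets $\gamma$; then since $\beta$ intersects $\gamma$ and must exit the half collar on the far side, $l_\beta(S)\ge 2w = \log g - (3+2\varepsilon)\log\log g$.

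Having established that every eligible filling curve set $F$ on $S$ contains a curve longer than $L:= \log g - (3+2\varepsilon)\log\log g$, and having $\sys(S)\le c_g=\log\log g$ since $S\notin\mathcal B_1$, I apply Lemma \ref{lem_dist_x} to conclude
\[
d_{\mathcal T}(S,X_g)\ge \frac14\log\frac{L}{\sys(S)} = \frac14\log\frac{\log g-(3+2\varepsilon)\log\log g}{\log\log g}.
\]
The final step is the asymptotic estimate: the right-hand side is $\frac14\log\bigl(\frac{\log g}{\log\log g}(1+o(1))\bigr) = \frac14\bigl(\log\log g - \log\log\log g + o(1)\bigr)$, which for $g$ large exceeds $\frac15\log\log g$ (indeed it is asymptotic to $\frac14\log\log g$, and the remark after Theorem \ref{thm_random} notes any constant below $\frac14$ works). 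Hence $d_{\mathcal T}(S,X_g)\ge\frac15\log\log g$ for all $S$ outside a set of probability tending to $0$, which is exactly the claim. The main obstacle, as flagged, is making rigorous the geometric step that a filling-and-pairwise-$\le 1$ curve set must contain a curve that genuinely crosses the wide half collar and therefore has length $\gtrsim 2w$; I would isolate this as a separate lemma (an analogue of Lemma \ref{lem_separate} adapted to half collars rather than to the $P_k$ decomposition) before invoking Lemma \ref{lem_dist_x}.
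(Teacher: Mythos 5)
Your proposal follows essentially the same route as the paper's proof of Theorem \ref{thm_random}: it combines Theorem \ref{thm_mp} and Theorem \ref{thm_nwx} with Lemma \ref{lem_dist_x}, using the observation that a filling curve set must contain a curve meeting $\gamma$ and hence crossing its wide half collar, so that curve has length comparable to $\log g$ while the systole is at most order $\log\log g$, and then checks the asymptotics against $\frac{1}{5}\log\log g$. The only differences are harmless: you take $c_g=\log\log g$ instead of the paper's $\frac{1}{5}\log\log g$, and your crossing-length bound $2w=\log g-(3+2\varepsilon)\log\log g$ is if anything stated more carefully than the paper's $\log g-2\log\log g$, with the crux step (some curve of $F$ must intersect $\gamma$ and traverse the half collar) treated at the same level of detail as in the paper.
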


\begin{proof}
		By Theorem \ref{thm_mp}, we let $c_g= \frac{1}{5}\log\log g$, then 
		 \[
				 P_{WP}( S\in \mathcal M_g| \sys(S)>\frac{1}{5}\log\log g ) < B\frac{\frac{1}{5}\log \log g}{(\log g)^{\frac{1}{5}}}. 
		 \]

		 For $S\in \mathcal M_g$ and $\sys(S)\le \frac{1}{5}\log\log g$, if $S$ satisfies the condition (a) in Theorem \ref{thm_nwx}, then for any filling curve set $F$ in $S$, $F$ contains a curve of length at least $\log g-2\log\log g$ since in $F$ there must be a curve intersect the separating curve $\gamma$ in condition (a). 
		 Then by Lemma \ref{lem_dist_x}, distance between $S$ and $X_g$ is bounded below by
		 \[
				 \frac{1}{4} \log \frac{\log g-2\log\log g}{\frac{1}{5}\log\log g}> \frac{1}{5}\log\log g. 
		 \]
		 By Theorem \ref{thm_nwx}, 
		 $P_{WP}(S\in \mathcal M_g| d_{\mathcal T}(S,X_g)>\frac{1}{5}\log\log g) \to 1$ as $g\to \infty$ and the theorem holds. 
\end{proof}

Recall that $X_g$ is contained in the thick part $\mathcal M_g^{\ge\varepsilon}$ in $\mathcal M_g$. 
The thick part  $\mathcal M_g^{\ge\varepsilon}$ has positive probability in $\mathcal M_g$ by \cite[Theorem 4.1]{mirzakhani2019lengths}, immediately we have 

\begin{cor}
		When $g\to \infty$, 
		\[
				\frac{P_{WP}(S|d_{\mathcal T}(S,X_g)<\frac{1}{5}\log\log g ) }{P_{WP}(S\in \mathcal M_g^{\ge\varepsilon})}\to 0. 
\]

		%If $X_g \subset \mathcal M_g^{\ge\varepsilon}$, the thic

		%When $g$ is sufficiently large, 
		%in the thick part of $\mathcal M_g$, a generic point (with respect to the Weil Petersson volume) has distance at least $\log \log g$ to $X_g$. 
		\label{cor_thick}
\end{cor}

\section{The Weil-Petersson distance version of Theorem \ref{thm_random}}
\label{sec_random_wp}
Besides the Teichm\"uller distance, if we consider the Weil-Petersson distance to $X_g$, we can prove Theorem \ref{thm_random_wp}. 
%To prove that theorem, first we need a lemma, the Weil-Petersson distance version of Lemma \ref{lem_dist_x}. 

\subsection{Lower bounds on Weil-Petersson distance}

The main tools to prove Theorem \ref{thm_random_wp} are Theorem \ref{thm_mp}, Theorem \ref{thm_nwx} and the lower bounds on Weil-Petersson distance by Wu in \cite{wu2020new}. 

 Before stating Wu's result, We prepare soem definitions, for details, see \cite{wu2020new}.  

 We let $\mathcal M_{-1}$ be the space of complete Riemannian metric on topological surface $S_g$ whth constant curvature $-1$. 
 Then by definition of Teichm\"uller space, $\mathcal T_g = \mathcal M_{-1}/\mathrm{Diff}_0(S_g)$ where $\mathrm{Diff}_0(S_g)$ is the group of diffeomorphism of $S_g$ isotopic to the identity. Let $\pi:\mathcal M_{-1}\to \mathcal T_g$ be the natural projection. 
 We call a smooth path $c(t)\subset \mathcal M_{-1}$ a {\em horizontal curve} if there exists a holomorphic quadratic differential $q(t)$ on $c(t)$ such that $\frac{\partial c(t)}{\partial t} = q(t)$. 

 On a surface $X\in \mathcal M_{-1}$ for $p\in X$, we let $\inj_X(p)$ be the {\em injective radius} of $X$ at $p$, namely the half length of shortest essential loop on $X$ passing $p$. Then we define
 \begin{definition}
		 On a topological surface $\Sigma_g(g\ge2)$, fix $p\in \Sigma_g$. For any $X,Y\in \mathcal T_g$, we define
		 \[
				 \left|\sqrt{\inj_X(p)}-\sqrt{\inj_Y(p)}\right|:=\sup_{c}\left|  \sqrt{\inj_{c(0)}(p)}-\sqrt{\inj_{c(1)}(p)}\right|
		 \]
		 where $c:[0,1]\to \mathcal M_{-1}$ runs over all smooth horizontal curves wth $\pi(c(0))=X$, $\pi(c(1))=Y$ and $\pi(c([0,1))\subset \mathcal T_g$ is the Weil-Petersson geodesic connecting $X$ and $Y$. 
 \end{definition}

 \begin{theorem}[\cite{wu2020new} Theorem 1.1]
		 For a topological surface $\Sigma_g$ with $g\ge2$, fix a point $p\in S_g$. Then for any $X,Y\in \mathcal T_g$ 
		 \[
				 \left|\sqrt{\inj_X(p)}-\sqrt{\inj_Y(p)}\right|\le 0.3884 d_{wp}(X,Y)
		 \]
		 where $d_{wp}(X,Y)$ is the Weil-Petersson distance. 
		 
		 \label{thm_inj}
 \end{theorem}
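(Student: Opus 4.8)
The plan is to reduce the inequality to a pointwise bound on the Weil--Petersson gradient of $\sqrt{\inj_p}$ along a Weil--Petersson geodesic, and then to control that gradient uniformly by exploiting that a length--minimizing loop through $p$ forces the $\pi_1$--orbit of a lift $\tilde p$ to be $2\inj_X(p)$--separated in $\mathbb H^2$.

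First I would fix one smooth horizontal curve $c\colon[0,L]\to\mathcal M_{-1}$ with $\pi\circ c$ the unit--speed Weil--Petersson geodesic from $X$ to $Y$, so that $L=d_{wp}(X,Y)$ and $\|\dot c(t)\|_{wp}=1$; by the definition of the left--hand side it suffices to prove $\bigl|\sqrt{\inj_{c(L)}(p)}-\sqrt{\inj_{c(0)}(p)}\bigr|\le 0.3884\,L$ for every such $c$. For a class $[\beta]\in\pi_1(S_g,p)$ let $F_\beta(t)$ be the length of the $c(t)$--geodesic loop based at $p$ in the class $[\beta]$; each $F_\beta$ is smooth in $t$, and on $[0,L]$ only finitely many classes have $F_\beta\le K$ for any fixed $K$, so $h(t):=\sqrt{\inj_{c(t)}(p)}=\bigl(\tfrac12\min_\beta F_\beta(t)\bigr)^{1/2}$ is Lipschitz and $|h(L)-h(0)|\le\int_0^L|h'(t)|\,dt$. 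Hence it is enough to show $|h'(t)|\le 0.3884$ at every point of differentiability.

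At such a $t_0$, pick $[\gamma]$ realizing the minimum; since $\inj_{c(\cdot)}(p)\le\tfrac12F_\gamma(\cdot)$ with equality at $t_0$ and $h$ is differentiable there, one gets $|h'(t_0)|=|\dot F_\gamma(t_0)|\big/\bigl(4\sqrt{\inj_{c(t_0)}(p)}\,\bigr)$. The derivative $\dot F_\gamma(t_0)$ is the pairing of $dF_\gamma$ with $\dot c(t_0)$ (a first variation of length with the base point $p$ held fixed, so there is no boundary contribution from a possible corner at $p$), and Cauchy--Schwarz with $\|\dot c(t_0)\|_{wp}=1$ gives $|\dot F_\gamma(t_0)|\le\|\mathrm{grad}_{wp}F_\gamma\|_{wp}$. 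So everything reduces to the scale--invariant estimate $\|\mathrm{grad}_{wp}F_\gamma\|_{wp}^2\le 8\,(0.3884)^2\,\inj_{c(t_0)}(p)$ for the loop $\gamma$ realizing the injective radius, \emph{uniformly in its length}.

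This last bound is the crux. I would expand $\|\mathrm{grad}_{wp}F_\gamma\|_{wp}^2$ as a Riera/Wolpert--type series: a leading term of size comparable to $F_\gamma$ (the analogue of $\tfrac2\pi\ell_\gamma$) plus a sum of nonnegative correction terms, one for each nontrivial coset of $\langle\gamma\rangle$ in $\pi_1(S_g,p)$, each decaying exponentially in the $\mathbb H^2$--distance from the lift of $\gamma$ through $\tilde p$ to its $\pi_1$--translates. Because $\gamma$ realizes $\inj_{c(t_0)}(p)$, every nontrivial translate of $\tilde p$ is at distance $\ge 2\inj_{c(t_0)}(p)=F_\gamma$ from $\tilde p$, so the translates form a $2\inj$--separated set in $\mathbb H^2$; combining this separation with the $e^{R}$ area growth of $\mathbb H^2$ gives a packing bound on the number of translates within distance $R$, and a geometric--series estimate then bounds the correction sum by $O(F_\gamma^2)$ when $F_\gamma$ is small (using the wide collar) and by a small fixed multiple of $F_\gamma$ when $F_\gamma$ is large. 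Optimizing the resulting constant over all lengths and configurations yields $0.3884$, and integrating $|h'|$ over $[0,L]$ completes the argument. I expect the main obstacle to be exactly this step: handling the non--smooth ``corner'' case forces one to work with based--loop length functions rather than genuine geodesic--length functions, and recovering the sharp constant $0.3884$ (rather than a crude $O(1)$) requires the precise exponential decay of the correction terms together with the precise packing bound, not just rough estimates.
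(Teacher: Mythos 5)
First, note that the paper does not prove this statement at all: it is quoted verbatim as \cite[Theorem 1.1]{wu2020new} and used as a black box in Section 5, so there is no proof in the paper to compare yours against; what follows compares your sketch with Wu's actual argument.

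Your reduction is fine as far as it goes: fixing a horizontal curve $c$ over the Weil--Petersson geodesic, writing $h(t)=\sqrt{\inj_{c(t)}(p)}$ as the square root of half the minimum of the based-loop length functions $F_\beta(t)$, and reducing to $|h'(t_0)|\le 0.3884$ at points of differentiability is exactly the right normalization of the statement (the sup-over-horizontal-curves definition is handled correctly, and the Danskin-type identity $h'(t_0)=\dot F_\gamma(t_0)/(4\sqrt{\inj})$ for an active class is valid). The genuine gap is the step you yourself flag as the crux: the claimed ``Riera/Wolpert-type series'' for $\|\mathrm{grad}_{wp}F_\gamma\|_{wp}^2$ does not exist. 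Riera's identity is an identity for geodesic-length functions of closed geodesics, which are genuine functions on $\mathcal T_g$; the based-loop length $F_\gamma$ (base point $p$ fixed, loop generally with a corner at $p$) is not a function on Teichm\"uller space at all --- it only makes sense along the chosen horizontal curve in $\mathcal M_{-1}$ --- so both the gradient expansion and the uniform bound $\|\mathrm{grad}_{wp}F_\gamma\|_{wp}^2\lesssim \inj$ are asserted rather than proved, and proving them is essentially the entire content of the theorem. (There is also a factor-of-two slip: your reduction requires $\|\mathrm{grad}_{wp}F_\gamma\|_{wp}^2\le 16\,(0.3884)^2\inj$, i.e. $8\,(0.3884)^2F_\gamma$, not $8\,(0.3884)^2\inj$; this matters if one hopes to recover the sharp constant, since already for closed geodesics Riera's formula forces $\|\mathrm{grad}_{wp}\ell_\alpha\|^2\ge \tfrac2\pi \ell_\alpha$, which leaves little room once the correction sum is added.) Moreover, the packing-plus-geometric-series sketch would at best give an $O(1)$ Lipschitz constant, not $0.3884$.

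For contrast, Wu's proof does not pass through any gradient-norm identity for based loops. He bounds the first variation of the based-loop length directly: along the horizontal curve the metric deformation is given by a harmonic Beltrami differential $\mu(t)$ of Weil--Petersson norm $\|\dot c(t)\|_{wp}$, and the derivative of $F_\gamma$ is controlled by an integral of $|\mu|$ along the loop; the key input is a Teo--Wolpert type mean-value estimate bounding $|\mu|$ pointwise (or in integrated form along the loop) by an explicit function of the injectivity radius times $\|\mu\|_{wp}$, which after the square-root substitution yields a uniform bound on $|h'|$ and, upon optimizing the explicit function, the constant $0.3884$ (and $0.5492=\sqrt{2}\cdot 0.3884$ for the systole corollary). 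If you want to complete a proof along your own lines you would first have to establish a based-loop analogue of Riera's formula, which is a substantial new result; the shorter route is the harmonic-Beltrami-differential estimate.
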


 A corollary to this theorem is also needed

 \begin{cor}[\cite{wu2020new} Corollary 1.2]
		 For $X,Y\in \mathcal T_g$, 
		 \[
				 \left| \sqrt{\sys(X)}-\sqrt{\sys(Y)} \right| \le 0.5492\, d_{wp}(X,Y)
		 \]
		 \label{cor_sys_dist_wp}
 \end{cor}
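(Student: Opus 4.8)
The plan is to deduce Corollary \ref{cor_sys_dist_wp} from Theorem \ref{thm_inj} by the elementary observation that, on a closed hyperbolic surface, the systole is exactly twice the minimum over all points of the injectivity radius. Once that is in hand, the corollary is a one-line consequence of the fact that the distance between the minima of two functions is controlled by the sup-norm of their difference.

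\textbf{Ingredient 1: systole as a minimal injectivity radius.} For $X\in\mathcal T_g$ and $p\in X$, write $\ell_X(p)$ for the length of the shortest essential loop through $p$, so that $\inj_X(p)=\tfrac12\ell_X(p)$. If $p$ lies on a shortest closed geodesic $\gamma$, then $\gamma$ is an essential loop through $p$ and $\ell_X(p)\le \sys(X)$; conversely, for arbitrary $p$ the geodesic representative of the free homotopy class of the shortest essential loop through $p$ has length at most $\ell_X(p)$ and is essential, hence has length at least $\sys(X)$, so $\ell_X(p)\ge\sys(X)$ for every $p$. Therefore $\min_{p\in X}\inj_X(p)=\tfrac12\sys(X)$, the minimum being attained since $X$ is compact and $\inj_X(\cdot)$ is continuous. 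Taking square roots, $\min_{p\in X}\sqrt{\inj_X(p)}=\tfrac1{\sqrt2}\sqrt{\sys(X)}$, and likewise for $Y$.

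\textbf{Ingredient 2 and conclusion.} For real functions $f,g$ on a compact set attaining their minima one has $\bigl|\min f-\min g\bigr|\le\sup|f-g|$: with $m_f=f(p_f)$ one gets $m_f\le f(p_g)\le g(p_g)+|f(p_g)-g(p_g)|\le m_g+\sup|f-g|$, and symmetrically. Applying this to $f(p)=\sqrt{\inj_X(p)}$, $g(p)=\sqrt{\inj_Y(p)}$ and using Ingredient 1 gives
\[
\frac1{\sqrt2}\,\bigl|\sqrt{\sys(X)}-\sqrt{\sys(Y)}\bigr|=\bigl|\min_p\sqrt{\inj_X(p)}-\min_p\sqrt{\inj_Y(p)}\bigr|\le\sup_{p}\bigl|\sqrt{\inj_X(p)}-\sqrt{\inj_Y(p)}\bigr|.
\]
For each fixed $p$, Theorem \ref{thm_inj} bounds the inner quantity by $0.3884\,d_{wp}(X,Y)$, and since this bound is independent of $p$ it survives the supremum over $p$; multiplying through by $\sqrt2$ then yields $\bigl|\sqrt{\sys(X)}-\sqrt{\sys(Y)}\bigr|\le 0.5492\,d_{wp}(X,Y)$.

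\textbf{Main obstacle.} There is no substantial difficulty; the only point requiring care is that the left-hand side of Theorem \ref{thm_inj} is, by definition, a supremum over horizontal lifts of the Weil–Petersson geodesic from $X$ to $Y$, not the bare number $\bigl|\sqrt{\inj_X(p)}-\sqrt{\inj_Y(p)}\bigr|$. Since the former dominates the latter, replacing one by the other only weakens the inequality, so the manipulation above — passing to $\min_p$ and then to $\sup_p$ — is legitimate, and the constant of Theorem \ref{thm_inj} scaled by $\sqrt2$ produces the constant in the corollary.
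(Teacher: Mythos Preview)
The paper does not supply its own proof of this corollary; it is quoted verbatim from \cite{wu2020new} as Corollary~1.2 there, with no argument given. So there is nothing in the paper to compare against.

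Your derivation is correct and is presumably the intended one: the identity $\min_{p}\inj_X(p)=\tfrac12\sys(X)$ together with $|\min f-\min g|\le\sup|f-g|$ reduces the statement to Theorem~\ref{thm_inj}, and $\sqrt2\cdot 0.3884\approx 0.5493$ recovers the stated constant. Your handling of the one subtle point --- that $|\sqrt{\inj_X(p)}-\sqrt{\inj_Y(p)}|$ in Theorem~\ref{thm_inj} is by definition a supremum over horizontal lifts rather than the value for a single lift --- is also right: you only need the inequality for one choice of horizontal curve $c$, and the supremum dominates that. One small clarification you could add is that, to make sense of $f(p)=\sqrt{\inj_X(p)}$ and $g(p)=\sqrt{\inj_Y(p)}$ as functions of the \emph{same} variable $p$, you are implicitly fixing a single horizontal curve $c$ and setting $f(p)=\sqrt{\inj_{c(0)}(p)}$, $g(p)=\sqrt{\inj_{c(1)}(p)}$ on the underlying topological surface; the minima are then $\tfrac1{\sqrt2}\sqrt{\sys(X)}$ and $\tfrac1{\sqrt2}\sqrt{\sys(Y)}$ regardless of which lift is chosen, so the argument goes through.
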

 \begin{remark}
		 Before this corollary, the function $\sqrt{\sys(\cdot)}$ was proved to be uniformly Lipschitz on $\mathcal T_g$ endowed with the Weil Petersson metric by Wu in \cite{wu2019growth}. 
 \end{remark}
 \subsection{The theorem with respect to Weil-Petersson distance}

 Now we begin to prove Theorem \ref{thm_random_wp}. 
 First we prove the following two lemmas
 \begin{lemma}
		 If $S\in \mathcal T_g$, satisfying the conditions (a) and (b) in Theorem \ref{thm_nwx}, then there is a curve $\alpha\subset S$, freely homotopic to the geodesic $\gamma$ in the conditions (a) and (b), for any point $p\in \alpha$, 
		 \[
				 \inj_S(p)\ge \frac{1}{4}\log g-(\frac{3}{4}+\frac{\varepsilon}{2})\log\log g. 
		 \]
		 \label{lem_inj_nwx}
 \end{lemma}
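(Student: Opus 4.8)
The statement says that on a surface $S$ satisfying conditions (a) and (b) of Theorem \ref{thm_nwx}, there is a curve $\alpha$ freely homotopic to the geodesic $\gamma$ such that every point $p\in\alpha$ has injective radius at least $\tfrac14\log g-(\tfrac34+\tfrac{\varepsilon}{2})\log\log g$. The natural approach is to work inside the half collar $C$ of $\gamma$ guaranteed by condition (a): $C$ is an embedded cylinder of width $w:=\tfrac12\log g-(\tfrac32+\varepsilon)\log\log g$ with $\gamma$ as one boundary. I would take $\alpha$ to be the geodesic $\gamma$ itself, or (if one wants a curve sitting in the ``middle'' of $C$) the equidistant curve at distance $w/2$ from $\gamma$ inside $C$. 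Then I would estimate the injective radius at points of $\alpha$ from below by finding, for each such $p$, an embedded hyperbolic disk centered at $p$ whose radius is at least $w/2 = \tfrac14\log g-(\tfrac34+\tfrac\varepsilon2)\log\log g$.

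First I would recall the standard collar geometry. A half collar of width $w$ about $\gamma$ is isometric to a flat-coordinate region $\{(\rho,t): 0\le\rho\le w,\ t\in\mathbb R/\ell\mathbb Z\}$ with metric $d\rho^2+\cosh^2\!\rho\,dt^2$, where $\ell=l_\gamma(S)$; by condition (b), $\ell$ is large (at least $2\log g-5\log\log g$), so the $t$-direction is long and contributes no short loop. For a point $p$ on $\gamma$ (i.e. $\rho=0$), any essential loop through $p$ must either be homotopic to a power of $\gamma$, hence of length $\ge\ell$, or must exit the half collar, hence have length $\ge 2w$ (it has to cross the collar and come back). Either way the shortest essential loop through $p$ has length $\ge\min(\ell,2w)=2w$ (using that $\ell\gg 2w$ for large $g$), so $\inj_S(p)\ge w>\tfrac14\log g-(\tfrac34+\tfrac\varepsilon2)\log\log g$ — in fact with room to spare. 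Actually one gets the cleaner bound with $\alpha=\gamma$ directly. If instead $\alpha$ is taken at distance $w/2$ from $\gamma$, the same dichotomy gives loops of length $\ge\ell$ or $\ge 2\cdot(w/2)=w$ (crossing to the $\gamma$-side boundary or to the far side), again yielding $\inj_S(p)\ge w/2$, which is exactly the claimed bound.

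The key steps, in order: (1) invoke condition (a) to fix the half collar $C$ of width $w$ about $\gamma$ and set up its Fermi coordinates; (2) choose $\alpha$ — I expect the paper takes $\alpha=\gamma$, which makes the constant $\tfrac14\log g-(\tfrac34+\tfrac\varepsilon2)\log\log g = w/2$ match most transparently if one only claims a half-width disk fits on one side, or $\alpha$ the core-type curve at distance $w/2$; (3) for $p\in\alpha$ classify essential loops through $p$ into those staying in $C$ (homotopic to powers of $\gamma$, length $\ge\ell$) and those leaving $C$ (length $\ge 2\cdot\mathrm{dist}(p,\partial C)$); (4) use condition (b), i.e. $\ell\ge 2\log g-5\log\log g$, to see the first alternative is never the minimizer for large $g$; (5) conclude $\inj_S(p)\ge \mathrm{dist}(p,\partial C)\ge w/2$ and simplify $w/2=\tfrac14\log g-(\tfrac34+\tfrac\varepsilon2)\log\log g$.

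**Main obstacle.** The only delicate point is the ``leaving the collar'' estimate: one must be careful that a loop through $p$ that exits $C$ really has length at least twice the distance from $p$ to the boundary component of $C$ it crosses, and that an essential loop staying inside the embedded cylinder $C$ is genuinely forced to be freely homotopic to a power of $\gamma$ (this uses that $C$ is embedded and is an honest cylinder, so $\pi_1(C)=\mathbb Z\langle\gamma\rangle$). Both are standard but need the collar to be a half collar on the correct side; I would state them carefully and then the arithmetic simplification of $w/2$ is immediate. I do not anticipate needing any hyperbolic trigonometry beyond the collar normal form, since the bound is purely a distance/topology argument.
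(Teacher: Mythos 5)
Your second option is exactly what the paper does: it takes $\alpha$ to be the equidistant curve inside the half collar at distance $\tfrac{w}{2}=\tfrac14\log g-(\tfrac34+\tfrac{\varepsilon}{2})\log\log g$ from $\gamma$, and the lemma then follows from the dichotomy you describe (a loop through $p$ either stays in the embedded cylinder, hence is homotopic to a power of $\gamma$ and has length at least $l_\gamma(S)\ge 2\log g-5\log\log g$ by condition (b), or it must reach $\partial C$ and return, hence has length at least $2\cdot\tfrac{w}{2}=w$). So the core of your argument is correct and matches the paper.

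However, your preferred first choice, $\alpha=\gamma$ with the claim $\inj_S(p)\ge w$ ``with room to spare,'' is wrong, and the error is precisely the point that forces the shift to the curve at distance $\tfrac{w}{2}$. Condition (a) gives only a \emph{half} collar, i.e.\ a one-sided neighborhood of $\gamma$. For $p\in\gamma$, an essential loop through $p$ need not enter that half collar at all: it can leave immediately into the uncontrolled side of $\gamma$, go around a short geodesic lying close to $\gamma$ there, and return, and nothing in (a) or (b) bounds its length from below. (The two-sided collar guaranteed by the collar lemma is useless here, since $\gamma$ is long, of length about $2\log g$, so that collar is exponentially thin.) The same objection applies to your remark that a ``half-width disk on one side'' would suffice: the injectivity radius at $p$ concerns all essential loops through $p$, including those contained entirely in the non-collared side. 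Taking $\alpha$ at distance $\tfrac{w}{2}$ inside the half collar repairs this, because then every boundary component of the half collar, including $\gamma$ itself, is at distance at least $\tfrac{w}{2}$ from $p$, so the exit-and-return estimate applies to every loop leaving the cylinder; this is why the bound in the lemma is $\tfrac{w}{2}$ and not $w$.
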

 \begin{proof}
		 By condition (a) and (b), $\gamma\subset S$ is a simple closed geodesic of length $2\log g -5 \log \log g$, having a half collar of width $\frac{1}{2}\log g-(\frac{3}{2}+\varepsilon)\log\log g$. Then let $\alpha$ be the curve in the halfcollar of $\gamma$, consisting of points whose distance to $\gamma$ is $\frac{1}{4}\log g-(\frac{3}{4}+\frac{\varepsilon}{2})\log\log g$. This lemma follows immediately. 
 \end{proof}

 \begin{lemma}
		 For any surface $S'\in X_g$, on any essential curve $\alpha'\subset S'$, there is at least one point $p'\in \alpha'$, such that \[
				 \inj_{S'}(p')\le \frac{1}{2}\sys(S')
		 \]
		 \label{lem_inj_fill}
 \end{lemma}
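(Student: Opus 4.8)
The plan is to show that for any surface $S'\in X_g$ and any essential simple closed curve $\alpha'\subset S'$, the curve $\alpha'$ must come within distance $\frac{1}{2}\sys(S')$ of some point lying on a shortest geodesic of $S'$; the injectivity radius at that point is then at most $\frac{1}{2}\sys(S')$. First I would use the defining property of $X_g$: the set $F'$ of shortest geodesics of $S'$ is filling, so $S'\setminus F'$ is a union of open topological disks. Consequently the curve $\alpha'$ cannot be disjoint from $F'$ — if it were, it would be contained in one of these disks and hence null-homotopic, contradicting that $\alpha'$ is essential.

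Next I would pick a point $q\in \alpha'\cap \beta$ for some shortest geodesic $\beta\in F'$ (a shortest geodesic has length exactly $\sys(S')$). If $\alpha'$ actually intersects some $\beta\in F'$ transversally or shares a point with it, take $p'=q$ on $\alpha'$; the loop running once around $\beta$ based at $p'$ is essential and has length $\sys(S')$, so $\inj_{S'}(p')\le \frac{1}{2}\sys(S')$. The slightly more delicate case is when $\alpha'$ meets $F'$ only "at a distance" rather than in a genuine intersection point — but since $F'$ fills, $\alpha'$ must genuinely cross or touch $F'$, so in fact every essential closed curve has a point on it lying on a shortest geodesic, and the bound follows. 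I would phrase this cleanly: since $F'$ is filling, every essential closed curve intersects $\bigcup F'$, pick $p'$ in that intersection, and then the geodesic of $F'$ through $p'$ gives an essential loop through $p'$ of length $\sys(S')$.

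The step I expect to need the most care is the precise topological statement that an essential curve cannot avoid a filling system. This is standard (a curve disjoint from a filling system lies in a complementary disk and is inessential, and a curve freely homotopic into such a disk is inessential), and I would cite it as the definition of "filling" given in the introduction, or give the one-line argument above. No hyperbolic trigonometry or analytic machinery is needed here; the lemma is purely topological together with the trivial observation that a simple closed geodesic through $p'$ of length $\ell$ realizes an essential loop at $p'$ of length $\ell$, forcing $\inj_{S'}(p')\le \ell/2$. With Lemma \ref{lem_inj_nwx} and Lemma \ref{lem_inj_fill} in hand, Theorem \ref{thm_random_wp} will then follow by combining Theorem \ref{thm_nwx} (most surfaces have a point of large injectivity radius), Theorem \ref{thm_mp} (most surfaces have small systole), Corollary \ref{cor_sys_dist_wp} and Theorem \ref{thm_inj} (Wu's lower bounds on Weil–Petersson distance), in the same spirit as the proof of Lemma \ref{lem_dist_x}.
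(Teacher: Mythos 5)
Your proposal is correct and follows essentially the same route as the paper: since the shortest geodesics of $S'\in X_g$ fill, every essential curve $\alpha'$ must meet some shortest geodesic $\beta'$, and at a point $p'\in\alpha'\cap\beta'$ the loop $\beta'$ gives $\inj_{S'}(p')\le \frac{1}{2}l_{\beta'}(S')=\frac{1}{2}\sys(S')$. Your extra justification that a curve disjoint from a filling system lies in a complementary disk and is therefore inessential is exactly the (implicit) topological step in the paper's one-line argument.
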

 \begin{proof}
		 Recall $S'\in X_g$ means shorest geodesics on $S'$ consist of a filling curve set. Then any essential curve $\alpha'$ intersects at least one shortest geodesic geodesic. We pick a shortest geodesic that intersects $\alpha'$ and denote it by $\beta'$. We let $p'$ be a point in $\alpha'\cap\beta'$. Then $\inj_{S'}(p')\le \frac{1}{2}l_{\beta'}(S')=\frac{1}{2}\sys(S')$. 
 \end{proof}
 Now we are ready to prove
 \begin{theorem}
		 For the probability of a point in $\mathcal M_g$, whose Weil-Petersson distance to $X_g$ is smaller than $0.6521 ( \sqrt{\log g} - \sqrt{7\log\log g})$, we have
		\[
				P_{WP}\left(S|d_{wp}(S,X_g)<0.6521 \left( \sqrt{\log g} - \sqrt{7\log\log g}\right)\right) \to 0
		\]
		as $g\to \infty$. 
		 \label{thm_random_wp}
 \end{theorem}
 \begin{proof}
		 		By Theorem \ref{thm_mp}, we let $c_g= \log\log g$, then 
				\begin{equation}
						P_{WP}( S\in \mathcal M_g| \sys(S)>\log\log g ) < B\frac{\log \log g}{(\log g)}. 
						\label{for_mp}
				\end{equation}

		 Let $S\in \mathcal M_g$ satisfying condition (a) and (b) in Theorem \ref{thm_nwx} and $\sys(S)\le \log \log g$. 
		 For any $S'\in X_g$,
		 %For $S\in \mathcal M_g$ and $\sys(S)\le \log\log g$, 
		 by Corollary \ref{cor_sys_dist_wp}, 
		 \begin{eqnarray}
				 \label{for_dist_wp_a}
				 0.5492\, d_{wp}(S,S') &\ge& \left | \sqrt{\sys(S')}-\sqrt{\sys(S)}\right | \ge \sqrt{\sys(S')}-\sqrt{\sys(S)}\\
				 &\ge& \sqrt{\sys(S')}-\sqrt{\log\log g}. \nonumber
		 \end{eqnarray}
		 
		 On the other hand, since $S$ satisfies the condition (a) and (b) then by Lemma \ref{lem_inj_nwx} there is a curve $\alpha\subset S$, such that for any $p\in \alpha$, 
		 \begin{equation}
				 \inj_S(p)\ge \frac{1}{4}\log g-(\frac{3}{4}+\frac{\varepsilon}{2})\log\log g.  
				 \label{for_s_inj}
		 \end{equation}

		 We choose an arbitrary horizontal curve $c(t):[0,1]\to \mathcal M_{-1}$, satisfying $\pi(c(0)) = S$, $\pi(c(1)) = S'$ and $\pi(c([0,1))$ is a Weil-Petersson geodesic connecting $S$ and $S'$. Then by deforming the metric of $S$ along $c(t)$ to the metric of $S'$, we have $\alpha$ is also a well-defined essential simple closed curve on $S'$. By Lemma \ref{lem_inj_fill}, there is a point $p\in \alpha\subset S'$, such that 
				 \begin{equation}
						 \inj_{S'}(p)\le \frac{1}{2}\sys(S').
						 \label{for_sp_inj}
				 \end{equation}
		 
				 Therefore by Theorem \ref{thm_inj}, (\ref{for_s_inj}) and (\ref{for_sp_inj}), 
				 \begin{eqnarray}
						 \label{for_dist_wp_b}
						 0.3884 \,d_{wp}(S,S')&\ge& \left| \sqrt{\inj_{S}(p)}-\sqrt{ \inj_{S'}(p)} \right |\\
						\nonumber &\ge& \sqrt{\inj_{S}(p)}-\sqrt{ \inj_{S'}(p)} \\
						 \nonumber&\ge& 
						 \sqrt{\frac{1}{4}\log g-(\frac{3}{4}+\frac{\varepsilon}{2})\log\log g} - \sqrt{\frac{\sys(S')}{2}}.
				 \end{eqnarray}
				 Combining (\ref{for_dist_wp_a}) and (\ref{for_dist_wp_b}), then eliminating $\sys(S')$, we have 
				 \[
						 d_{wp}(S,S') \ge 0.6521 \left( \sqrt{\log g} - \sqrt{7\log\log g} \right). 
				 \]
				 Hence for any $S$ satisfying (a) and (b) and $\sys(S)\le \log\log g$
				 \[
						 d_{wp}(S,X_g) \ge 0.6521 \left( \sqrt{\log g} - \sqrt{7\log\log g} \right). 
				 \]
				 On the other hand, by Theorem \ref{thm_nwx} and (\ref{for_mp}),
				 \[
						 P_{WP}(S|S \text{ satisfies (a) and (b), }\sys(S)\le \log\log g)\to 1
				 \]
				 as $g\to \infty$. Therefore 
		\[
				P_{WP}\left(S|d_{wp}(S,X_g)\ge0.6521 \left( \sqrt{\log g} - \sqrt{7\log\log g}\right)\right) \to 1
		\]
		as $g\to \infty$ and the theorem holds. . 
		 
 \end{proof}
 \section{A criterion for the critical points}
 \label{sec_crit}
 The aim of this section is to prove Proposition \ref{prop_euc}, a criterion of critical points of the systole function. In \ref{sec_sub_g_inv}, we prove some properties of the tangent vector of $\mathcal{T}_g$ that is invariant under the action of its base point's automorphic group as a preparation. Then in \ref{sec_sub_criterion}, we prove Proposition \ref{prop_euc}. 
 \subsection{$G$-invariant tangent vector}
 \label{sec_sub_g_inv}
\begin{lemma}
		For Fuchsian groups $\Gamma$, $\Gamma'$, $\Gamma\vartriangleleft \Gamma'$, if $\mu\in HB(\mathbb{H}^2,\Gamma) \cap B(\mathbb{H}^2,\Gamma')$, then $\mu \in HB(\mathbb{H}^2,\Gamma')$. 
		\label{lem_harmonic}
\end{lemma}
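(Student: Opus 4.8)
The plan is to unwind the definition of ``harmonic Beltrami differential'' via the formula (\ref{for_harmonic}) and then verify the invariance condition (\ref{for_fuchsian}) directly. Since $\mu \in HB(\mathbb{H}^2,\Gamma)$, there is some representative (still call it $\mu$) with
\[
		\mu(z) = -\frac{2}{(\mathrm{Im}\, z)^2}\,\overline{\dot\Phi_0[\nu](\bar z)}
\]
for a suitable $\nu$; more to the point, being harmonic means $\mu = H[\mu]$, i.e. $\mu$ is a fixed point of the projection $H$. So the first step is to recall that $H[\mu]$ is obtained from the potential $\dot\Phi_0[\mu]$, which is an integral operator over all of $\mathbb{H}^2$ against the kernel $(\bar z-\zeta)^{-4}$, and that $\mu$ harmonic $\iff$ $\mu$ has the form $(\mathrm{Im}\,z)^2\overline{\psi(z)}$ for a holomorphic $\psi$ (with appropriate automorphy); this is the classical description of $HB(\mathbb{H}^2,\Gamma)$ I would use rather than fighting the integral directly.

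The key observation is that the defining functional equation (\ref{for_fuchsian}), $\mu = (\mu\circ\gamma)\,\overline{\gamma'}/\gamma'$, only depends on $\gamma$ as a Möbius transformation, not on whether it belongs to $\Gamma$ or $\Gamma'$. So the content of the hypothesis $\mu \in B(\mathbb{H}^2,\Gamma')$ is precisely that (\ref{for_fuchsian}) holds for every $\gamma\in\Gamma'$. The content of the conclusion $\mu\in HB(\mathbb{H}^2,\Gamma')$ is: (i) $\mu$ satisfies (\ref{for_fuchsian}) for all $\gamma\in\Gamma'$ — which we already have — and (ii) $\mu$ is harmonic with respect to $\Gamma'$, i.e. $\mu$ lies in the image of the projection $H$ associated to $\Gamma'$. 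For (ii) I would argue that harmonicity is really a \emph{local} condition on $\mathbb{H}^2$ — being of the form $(\mathrm{Im}\,z)^2\overline{\psi(z)}$ with $\psi$ holomorphic — together with an $L^\infty$ bound; none of that sees the group at all. The group only enters in specifying \emph{which} automorphy type the holomorphic function $\psi$ must have, and that is governed exactly by (\ref{for_fuchsian}), which we are assuming for $\Gamma'$. Hence $\mu\in HB(\mathbb{H}^2,\Gamma)$ (so $\mu$ is harmonic as a differential on $\mathbb{H}^2$) plus $\mu\in B(\mathbb{H}^2,\Gamma')$ (so it descends to $\mathbb{H}^2/\Gamma'$) together force $\mu\in HB(\mathbb{H}^2,\Gamma')$.

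Concretely, the steps in order: first, write $\mu = (\mathrm{Im}\,z)^2\,\overline{\psi(z)}$ using that $\mu\in HB(\mathbb{H}^2,\Gamma)$, where $\psi$ is holomorphic on $\mathbb{H}^2$ and $\|\mu\|_\infty<\infty$; second, translate $\mu\in B(\mathbb{H}^2,\Gamma')$ into the statement that $\psi$ transforms as a weight-$4$ automorphic form for $\Gamma'$, i.e. $\psi(\gamma z)\gamma'(z)^2 = \psi(z)$ for all $\gamma\in\Gamma'$ (using the chain rule and $\mathrm{Im}(\gamma z) = |\gamma'(z)|\,\mathrm{Im}\,z$ for $\gamma\in\mathrm{PSL}_2(\mathbb{R})$); third, invoke the characterization of $HB(\mathbb{H}^2,\Gamma')$ as exactly those $(\mathrm{Im}\,z)^2\overline{\psi}$ with $\psi$ a bounded (in the appropriate norm) weight-$4$ automorphic form for $\Gamma'$, and conclude. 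The main obstacle I anticipate is purely bookkeeping: making sure the paper's chosen normalization of $H$ (via (\ref{for_harmonic}), with its specific constants $-2$, $-6/\pi$, and the conjugation $\bar z$) genuinely matches the ``$(\mathrm{Im}\,z)^2\cdot\overline{\text{holomorphic}}$'' description, so that one may legitimately say harmonicity is group-independent; if one prefers to avoid that, the alternative is to check directly that $H$ for $\Gamma$ and $H$ for $\Gamma'$ agree on $B(\mathbb{H}^2,\Gamma')$, which follows because the integral kernel in $\dot\Phi_0$ is the same and the domain $\mathbb{H}^2$ is the same — the projections differ only in which subspace they land in, and both land in the space of harmonic differentials. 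Either way, the crux is recognizing that ``harmonic'' is a condition on the differential on $\mathbb{H}^2$, not on the quotient.
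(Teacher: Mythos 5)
Your proposal is correct, and in fact it contains the paper's own argument as your stated ``alternative'': the paper proves the lemma exactly by observing that the projection $H$ is given by the group-independent formula (\ref{for_harmonic}), so the projections attached to $\Gamma$ and to $\Gamma'$ coincide on $B(\mathbb{H}^2,\Gamma)\cap B(\mathbb{H}^2,\Gamma')$; since $\mu\in HB(\mathbb{H}^2,\Gamma)$ is a fixed point of $H$ and $\mu\in B(\mathbb{H}^2,\Gamma')$, it is a fixed point of the $\Gamma'$-projection as well, hence lies in its image $HB(\mathbb{H}^2,\Gamma')$. Your primary route instead invokes the classical description of harmonic Beltrami differentials as $\mu=(\mathrm{Im}\,z)^2\,\overline{\psi(z)}$ with $\psi$ holomorphic, notes that this harmonicity is a local condition on $\mathbb{H}^2$ independent of the group, and translates $\mu\in B(\mathbb{H}^2,\Gamma')$ into the weight-$4$ automorphy $\psi(\gamma z)\gamma'(z)^2=\psi(z)$ for $\gamma\in\Gamma'$ (your computation of this translation is right). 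That route is also valid, but it imports a characterization of $HB(\mathbb{H}^2,\Gamma')$ as exactly the bounded differentials of this form, which the paper never states and which you would have to reconcile with the paper's definition of $HB$ as the image of $H$ given by (\ref{for_harmonic}) --- precisely the ``bookkeeping'' you flag. The paper's operator argument avoids this entirely and is self-contained given its definitions; your version buys a more conceptual explanation (harmonicity lives upstairs, the group only dictates automorphy), at the cost of an extra standard but unproved-in-the-paper identification. Note also that, as in the paper's proof, only the inclusion $\Gamma\subset\Gamma'$ is actually used; normality plays no role in this lemma.
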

\begin{proof}
		By (\ref{for_harmonic}), on $B(\mathbb{H}^2,\Gamma)\cap B(\mathbb{H}^2,\Gamma')$, $H:B(\mathbb{H}^2,\Gamma)\to HB(\mathbb{H}^2,\Gamma)$ and $H':B(\mathbb{H}^2,\Gamma')\to HB(\mathbb{H}^2,\Gamma)$ coincide. That is, $\forall \mu \in B(\mathbb{H}^2,\Gamma)\cap B(\mathbb{H}^2,\Gamma')$, $H[\mu] = H'[\mu]$. 

		If $\Gamma\vartriangleleft \Gamma'$, then $B(\mathbb{H}^2,\Gamma') \subset B(\mathbb{H}^2,\Gamma)$. For $\mu \in HB(\mathbb{H}^2,\Gamma)\cap B(\mathbb{H}^2,\Gamma')$, since $\mu\in HB(\mathbb{H}^2,\Gamma)$, then 
		\begin{equation}
				H[\mu] = \mu
				\label{for_project}
		\end{equation}
		because $H$ is a projection. Since $H$ is also the projection operator on $B(\mathbb{H}^2,\Gamma')$, (\ref{for_project}) holds on $B(\mathbb{H}^2,\Gamma')$ and then $\mu \in HB(\mathbb{H}^2,\Gamma')$. 
\end{proof}
 We assume $R$ is a closed genus $g$ Riemann surface and $\Gamma$ is a Fuchsian group model of $R$, $T(R)$ and $T(\Gamma)$ are the Teichm\"uller space with base point $R$ and $\Gamma$ respectively. 
 We let $g\in Aut(R)$. Then $g$ acts on $T(R)$ the action is given by
 \[ [S,f]\mapsto [S,f\circ g^{-1}] \]
 and also $g$ acts on $T(\Gamma)$, this action is given by 
 \begin{equation}
		 [w]\mapsto [\alpha\circ w\circ \tilde{g}^{-1}]. 
		 \label{for_g_action_on_gamma}
 \end{equation}
 Here $\tilde{g}$ is a lift of $g$ onto $\mathbb{H}^2$ and $\alpha\in PSL_2(\mathbb{R})$ satisfies $\alpha\circ w\circ \tilde{g}^{-1}$ fixing $0$, $1$ and $\infty$. 

 On the tangent space $HB(\mathbb{H}^2,\Gamma)$, the induced tangent map is given by 
 \[
		 g_*:\mu \mapsto (\mu\circ \tilde{g}^{-1})\frac{\overline{(\tilde{g}^{-1})'}}{(\tilde{g}^{-1})'}. 
 \]

 If $\mu \in HB(\mathbb{H}^2,\Gamma)$ is $g$-invariant, namely $g_*\mu = \mu$ then 
 \[
		 \mu =  (\mu\circ \tilde{g}^{-1})\frac{\overline{(\tilde{g}^{-1})'}}{(\tilde{g}^{-1})'}. 
 \]
 By (\ref{for_fuchsian}) 
 there is a Fuchsian group $\Gamma'$, containing $\Gamma$ and $\tilde{g}$, such that $\mu\in B(\mathbb{H}^2,\Gamma')$. 
 Then by Lemma \ref{lem_harmonic}, $\mu\in HB(\mathbb{H}^2,\Gamma')$. Immediately we have
 \begin{lemma}
		 $\mu \in HB(\mathbb{H}^2,\Gamma)$ is $g$-invariant if and only if $\mu \in HB(\mathbb{H}^2,\Gamma')$. 
		 Here $\Gamma'$ is a Fuchsian model of $R/\langle g\rangle$ containing $\Gamma$. ($\langle g\rangle$ is the subgroup of $Aut(R)$ generated by $g$. )

		 \label{lem_g_invariant}
 \end{lemma}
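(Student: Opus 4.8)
The goal is to prove Lemma \ref{lem_g_invariant}: a harmonic Beltrami differential $\mu \in HB(\mathbb{H}^2,\Gamma)$ is $g$-invariant if and only if $\mu \in HB(\mathbb{H}^2,\Gamma')$, where $\Gamma'$ is a Fuchsian model of $R/\langle g\rangle$ containing both $\Gamma$ and a lift $\tilde g$. The plan is to chase the two implications through the relation (\ref{for_fuchsian}) that characterizes membership in $B(\mathbb{H}^2, \cdot)$, together with Lemma \ref{lem_harmonic}. Most of the work for the harder direction has already been packaged into Lemma \ref{lem_harmonic}, so the proof should be short.

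First I would do the forward direction. Suppose $\mu \in HB(\mathbb{H}^2,\Gamma)$ is $g$-invariant, i.e. $g_*\mu = \mu$. Unwinding the definition of $g_*$ gives exactly $\mu = (\mu \circ \tilde g^{-1})\,\overline{(\tilde g^{-1})'}/(\tilde g^{-1})'$, which is precisely the relation (\ref{for_fuchsian}) for the element $\tilde g^{-1}$ (equivalently $\tilde g$). Since $\mu$ already satisfies (\ref{for_fuchsian}) for every $\gamma \in \Gamma$ (as $\mu \in B(\mathbb{H}^2,\Gamma)$), and the set of elements of $PSL_2(\mathbb R)$ for which (\ref{for_fuchsian}) holds is closed under composition and inversion, $\mu$ satisfies (\ref{for_fuchsian}) for every element of the group $\Gamma'$ generated by $\Gamma$ and $\tilde g$. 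Hence $\mu \in B(\mathbb{H}^2,\Gamma')$. Now $\Gamma \vartriangleleft \Gamma'$ (it is the kernel of the induced covering $R \to R/\langle g\rangle$, or one checks normality directly since $\tilde g$ normalizes $\Gamma$), $\mu \in HB(\mathbb{H}^2,\Gamma)$, and $\mu \in B(\mathbb{H}^2,\Gamma')$, so Lemma \ref{lem_harmonic} applies and gives $\mu \in HB(\mathbb{H}^2,\Gamma')$.

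For the converse, suppose $\mu \in HB(\mathbb{H}^2,\Gamma')$. Then in particular $\mu \in B(\mathbb{H}^2,\Gamma')$, so (\ref{for_fuchsian}) holds for every element of $\Gamma'$, and since $\tilde g \in \Gamma'$ it holds for $\tilde g$ (hence for $\tilde g^{-1}$). This is exactly the statement $g_*\mu = \mu$, so $\mu$ is $g$-invariant; and $HB(\mathbb{H}^2,\Gamma') \subset HB(\mathbb{H}^2,\Gamma)$ since $B(\mathbb{H}^2,\Gamma') \subset B(\mathbb{H}^2,\Gamma)$ and the harmonic projection operators agree on the smaller space (this is the content of the first paragraph of the proof of Lemma \ref{lem_harmonic}), so $\mu$ is indeed a $g$-invariant element of $HB(\mathbb{H}^2,\Gamma)$, as required.

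The only genuinely delicate point — and the step I would be most careful about — is the construction of $\Gamma'$ and the verification that it is honestly a Fuchsian model of $R/\langle g\rangle$ containing $\Gamma$: one must know that $\tilde g$ can be chosen so that $\langle \Gamma, \tilde g\rangle$ is discrete (this uses that $g$ has finite order as an automorphism of the compact surface $R$, so $\tilde g$ has finite order modulo $\Gamma$ and the group it generates with $\Gamma$ is a finite extension of a discrete group, hence discrete), and that $\mathbb{H}^2/\Gamma' = R/\langle g\rangle$ with $\Gamma \vartriangleleft \Gamma'$ of index $|\langle g\rangle|$. This is standard covering-space/Fuchsian-group theory, so I would state it with a one-line justification and a reference rather than belabor it. Everything else is formal manipulation of (\ref{for_fuchsian}) and an invocation of Lemma \ref{lem_harmonic}.
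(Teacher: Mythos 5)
Your proof follows essentially the same route as the paper: translate $g$-invariance of $\mu$ into the relation (\ref{for_fuchsian}) for $\tilde g$, conclude $\mu \in B(\mathbb{H}^2,\Gamma')$ for $\Gamma'$ generated by $\Gamma$ and $\tilde g$, and invoke Lemma \ref{lem_harmonic}; the converse direction, which the paper treats as immediate, you spell out correctly via $HB(\mathbb{H}^2,\Gamma')\subset HB(\mathbb{H}^2,\Gamma)$ and (\ref{for_fuchsian}) applied to $\tilde g\in\Gamma'$. Your added care about the discreteness of $\Gamma'$ and the closure of (\ref{for_fuchsian}) under composition and inversion only makes the argument more complete than the paper's terse version.
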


More generally, for a subgroup $G$ of $Aut(G)$, we have 
 \begin{lemma}
		 $\mu \in HB(\mathbb{H}^2,\Gamma)$ is $G$-invariant if and only if $\mu \in HB(\mathbb{H}^2,\Gamma')$. 
		 Here $\Gamma'$ is a Fuchsian model of $R/G$ containing $\Gamma$.

		 \label{lem_group_invariant}

 \end{lemma}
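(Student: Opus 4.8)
The plan is to reduce the statement for a general subgroup $G \leq \mathrm{Aut}(R)$ to the cyclic case already handled in Lemma \ref{lem_g_invariant}, using the fact that $G$ is finite and hence generated by finitely many elements $g_1, \dots, g_k$. The key observation is that $\mu \in HB(\mathbb{H}^2,\Gamma)$ is $G$-invariant if and only if it is $g_i$-invariant for every generator $g_i$, since the $g_{i*}$ generate the representation of $G$ on $HB(\mathbb{H}^2,\Gamma)$; this is the easy direction and reduces the problem to intersecting the conditions coming from each generator.

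First I would apply Lemma \ref{lem_g_invariant} to each generator $g_i$ separately: $\mu$ is $g_i$-invariant if and only if $\mu \in HB(\mathbb{H}^2,\Gamma'_i)$, where $\Gamma'_i$ is the Fuchsian group generated by $\Gamma$ and a lift $\tilde{g}_i$. Equivalently, via equation (\ref{for_fuchsian}), $\mu \in B(\mathbb{H}^2,\Gamma'_i)$ for each $i$, and since each $\mu$ is already harmonic with respect to $\Gamma$, Lemma \ref{lem_harmonic} promotes membership in $B(\mathbb{H}^2,\Gamma'_i)$ to membership in $HB(\mathbb{H}^2,\Gamma'_i)$. Then I would let $\Gamma'$ be the group generated by $\Gamma$ together with all the lifts $\tilde{g}_1, \dots, \tilde{g}_k$; this is a Fuchsian group (a lift of the full group $G$ acting on $\mathbb{H}^2$, extending $\Gamma$) and it is precisely a Fuchsian model of $R/G$ with $\Gamma \vartriangleleft \Gamma'$. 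Being $G$-invariant means satisfying the transformation law (\ref{for_fuchsian}) for every $\gamma \in \Gamma$ and for every $\tilde{g}_i$, hence — since these generate $\Gamma'$ and (\ref{for_fuchsian}) is preserved under composition and inversion of the deck transformations — for every element of $\Gamma'$. Thus $\mu \in B(\mathbb{H}^2,\Gamma')$, and one final application of Lemma \ref{lem_harmonic} (with $\Gamma \vartriangleleft \Gamma'$) gives $\mu \in HB(\mathbb{H}^2,\Gamma')$. Conversely, if $\mu \in HB(\mathbb{H}^2,\Gamma')$ then $\mu$ satisfies (\ref{for_fuchsian}) for each $\tilde{g}_i$, so $g_{i*}\mu = \mu$ for all $i$, so $\mu$ is $G$-invariant.

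The main obstacle I anticipate is purely bookkeeping rather than conceptual: one must check that the transformation relation (\ref{for_fuchsian}), which by definition is imposed element-by-element, actually behaves well under the group operations — that is, if $\mu$ transforms correctly under $\gamma_1$ and $\gamma_2$ then it transforms correctly under $\gamma_1\gamma_2$ and $\gamma_1^{-1}$ — so that verifying the relation on a generating set of $\Gamma'$ suffices. This is a short cocycle-type computation using the chain rule for $(\gamma_1 \gamma_2)'$, but it is the step where care is needed. One should also confirm that the group $\Gamma'$ generated by $\Gamma$ and the lifts is discrete and torsion-free-quotient appropriate, i.e. genuinely a Fuchsian model of $R/G$; this follows because the lifts of elements of the finite group $G$ normalize $\Gamma$ and the resulting extension is exactly the deck group of the covering $\mathbb{H}^2 \to R \to R/G$. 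With these points in place the proof is essentially the cyclic case of Lemma \ref{lem_g_invariant} applied repeatedly together with Lemma \ref{lem_harmonic}.
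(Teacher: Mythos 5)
Your proposal is correct and follows essentially the same route as the paper: the paper states this lemma as the immediate generalization of Lemma \ref{lem_g_invariant}, i.e.\ $G$-invariance gives the transformation law (\ref{for_fuchsian}) for the lifts of elements of $G$, so $\mu\in B(\mathbb{H}^2,\Gamma')$ for the Fuchsian model $\Gamma'$ of $R/G$ generated by $\Gamma$ and these lifts, and Lemma \ref{lem_harmonic} then upgrades this to $\mu\in HB(\mathbb{H}^2,\Gamma')$, with the converse read off from (\ref{for_fuchsian}) applied to the lifts. Your generator-by-generator reduction and the chain-rule check that (\ref{for_fuchsian}) is preserved under the group operations are exactly the bookkeeping the paper leaves implicit.
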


 \subsection{The criterion}
 \label{sec_sub_criterion}

 For genus $g$ surface $S_g$, we assume $G$ is a finite subgroup of $MCG(S_g)$, and $\rho$ is a marked hyperbolic structure on $S_g$ such that $\Sigma_g = (S_g,\rho) \in \mathcal{T}_g$ . 
 Then we define $X_g^G \subset \mathcal{T}_g$, the hyperbolic surface admits $G$ action as 
 \[
		 X_g^G = \left\{ \Sigma_g=(S_g,\rho) \in \mathcal{T}_g| G\le Aut(\Sigma_g) \right\}.
 \]

 The following proposition characterize the local coordinate of $X_g^G$. 
 \begin{proposition}
		 For $R\in X_g^G$, let $\Gamma$ be a Fuchsian model of $R$ and $\Gamma'$ be a Fuchsian model of $R/G$ such that $\Gamma\vartriangleleft\Gamma'$. We assume that 
		 \[
				 U = \left\{ [w^\mu]\in T(\Gamma)| \mu\in HB(\mathbb{H}^2,\Gamma'), \| \mu\|_\infty <\varepsilon_0\right\}
		 \]
		 is a subset of $T(\Gamma)$, consisting of surfaces induced by $G$-invariant tangent vectors of $T(\Gamma)$ at $\Gamma$. Then 
		 \begin{enumerate}
				 \item $U\subset X_g^G$. 
				 \item $U$ is homeomorphic to a domain in $\mathbb{R}^k$, where $k = \dim HB(\mathbb{H}^2,\Gamma')$. 
				 \item $HB(\mathbb{H}^2,\Gamma') = T_{[id]} U$. 
		 \end{enumerate}
		 \label{prop_local_xgg}
 \end{proposition}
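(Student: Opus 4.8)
### Proof Proposal for Proposition \ref{prop_local_xgg}

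The plan is to identify $U$ with the image, under the normalized solution operator $\mu\mapsto[w^\mu]$, of a ball in the finite-dimensional real vector space $HB(\mathbb{H}^2,\Gamma')$, and then to read off the three assertions from the deformation theory of the pair $\Gamma\vartriangleleft\Gamma'$. Throughout I assume $\varepsilon_0$ has been chosen small enough, as is implicit in the statement.

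For (1), fix $[w^\mu]\in U$, so that $\mu\in HB(\mathbb{H}^2,\Gamma')\subset B(\mathbb{H}^2,\Gamma')$ with $\|\mu\|_\infty<\varepsilon_0<1$. By (\ref{for_fuchsian}) and Proposition \ref{thm_qc_exist} the group $w^\mu\Gamma'(w^\mu)^{-1}$ is again Fuchsian; since $\Gamma\vartriangleleft\Gamma'$ and $B(\mathbb{H}^2,\Gamma')\subset B(\mathbb{H}^2,\Gamma)$, conjugation by $w^\mu$ yields $w^\mu\Gamma(w^\mu)^{-1}\vartriangleleft w^\mu\Gamma'(w^\mu)^{-1}$ with quotient isomorphic to $\Gamma'/\Gamma\cong G$. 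Concretely, writing $\tilde g$ for a lift to $\mathbb{H}^2$ of $g\in G$, the $G$-invariance $g_*\mu=\mu$ is exactly the Beltrami relation (\ref{for_fuchsian}) for $\tilde g$, so $w^\mu\tilde g(w^\mu)^{-1}\in PSL_2(\mathbb{R})$ and hence descends to an isometry $g_\mu$ of the surface $S_\mu$ underlying $[w^\mu]$. The marking quasiconformal map $\bar w^\mu\colon R\to S_\mu$ then conjugates the given symmetry of $R$ representing $g$ to $g_\mu$, so $g_\mu$ represents the class $g\in G\le MCG(S_g)$; thus $G\le\mathrm{Aut}(S_\mu)$ and $[w^\mu]\in X_g^G$.

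For (2) and (3), the set $B_{\varepsilon_0}=\{\mu\in HB(\mathbb{H}^2,\Gamma'):\|\mu\|_\infty<\varepsilon_0\}$ is an open ball in $HB(\mathbb{H}^2,\Gamma')$, a real vector space of dimension $k$, and $U=\{[w^\mu]:\mu\in B_{\varepsilon_0}\}$ is its image under $\Phi\colon\mu\mapsto[w^\mu]$. I would invoke the standard fact that, for $\varepsilon_0$ small enough, $\Phi|_{B_{\varepsilon_0}}$ is a homeomorphism (indeed a real-analytic embedding) onto $U$: this is the Bers parametrization of $T(\Gamma')$ near its base point, post-composed with the proper holomorphic inclusion $T(\Gamma')\hookrightarrow T(\Gamma)$ induced by $\Gamma\le\Gamma'$. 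This gives (2). For (3), Lemma \ref{lem_harmonic} applied to $\Gamma\vartriangleleft\Gamma'$ shows $HB(\mathbb{H}^2,\Gamma')\subset HB(\mathbb{H}^2,\Gamma)$, so under the identification $T_{[\mathrm{id}]}T(\Gamma)=HB(\mathbb{H}^2,\Gamma)$ the velocity of $t\mapsto[w^{t\mu}]$ is its harmonic representative $H[\mu]=\mu$; hence $d\Phi_{0}$ is the inclusion $HB(\mathbb{H}^2,\Gamma')\hookrightarrow HB(\mathbb{H}^2,\Gamma)$ and $T_{[\mathrm{id}]}U=d\Phi_0\big(HB(\mathbb{H}^2,\Gamma')\big)=HB(\mathbb{H}^2,\Gamma')$.

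The main obstacle I expect is the homeomorphism statement in (2): ruling out that distinct $G$-invariant harmonic Beltrami coefficients give Teichm\"uller-equivalent points of $T(\Gamma)$, and checking that $\Phi$ is open onto its image. I would resolve this by appealing to the structure theory of Teichm\"uller spaces of Fuchsian groups in \cite{imayoshi2012introduction} — where the local model of $T(\Gamma')$ at $\Gamma'$ is precisely a ball in $HB(\mathbb{H}^2,\Gamma')$ and the inclusion $T(\Gamma')\hookrightarrow T(\Gamma)$ is a proper holomorphic embedding — together with the continuity of $w^\mu$ in $\mu$ from Proposition \ref{prop_qc_continuous} and its $\mathbb{H}^2$-counterpart, which is what is needed to match the set-theoretic description of $U$ with $\Phi(B_{\varepsilon_0})$.
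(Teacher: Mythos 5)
Your proposal is correct and follows essentially the same route as the paper: part (1) via the observation that $G$-invariance of $\mu$ is exactly the compatibility relation (\ref{for_fuchsian}) for the lifts $\tilde g$, part (2) by identifying $U$ with the image of the ball $\{\mu\in HB(\mathbb{H}^2,\Gamma'):\|\mu\|_\infty<\varepsilon_0\}$ under $\mu\mapsto[w^\mu]$, and part (3) via the embedding $T(\Gamma')\hookrightarrow T(\Gamma)$. The only cosmetic difference is in (2), where the paper argues directly with continuity (Proposition \ref{prop_qc_continuous}), injectivity from the uniqueness in Theorem \ref{thm_qc_exist}, and invariance of domain, while you cite the Bers parametrization of $T(\Gamma')$ and the properness of the inclusion $T(\Gamma')\hookrightarrow T(\Gamma)$; both rest on the same standard structure theory.
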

 \begin{proof}
		 (1) For $[w^\mu]\in U$ and $g\in G$, the action of $g$ on $[w^\mu]$ is given by 
		 \[
				 g:[w^\mu] \mapsto [\alpha\circ w^\mu\circ \tilde{g}^{-1}]
		 \]
		 as (\ref{for_g_action_on_gamma}). The Beltrami differential of $g([w^\mu])$ is 
 \[
		  (\mu\circ \tilde{g}^{-1})\frac{\overline{(\tilde{g}^{-1})'}}{(\tilde{g}^{-1})'}. 
 \]
 Since $\mu$ is $G$-invariant 
 \[
		  \mu = (\mu\circ \tilde{g}^{-1})\frac{\overline{(\tilde{g}^{-1})'}}{(\tilde{g}^{-1})'}. 
 \]
 Then by the uniqueness of $w^\mu$ in Theorem \ref{thm_qc_exist}, \[
		 [w^\mu ] = [\alpha\circ w^\mu \circ \tilde{g}^{-1}]. 
 \]
 Therefore $[w^\mu]$ is $G$-invariant and $[w^\mu] \in X_g^G$.

 (2) Let $V= \left\{\mu\in HB(\mathbb{H}^2,\Gamma')| \| \mu\|_\infty  < \varepsilon_0\right\}$. Consider the map from $V$ to $U$:
 \begin{eqnarray*}
		 F: V &\to &U\\
		 \mu & \mapsto & w^\mu.
 \end{eqnarray*}
 Then $F$ is continuous by Proposition \ref{prop_qc_continuous} and is injective by the uniqueness of $w^\mu$ in Theorem \ref{thm_qc_exist}. By definition, $F(V)=U$. Then by the domain invariance theorem, $F$ is a homeomorphism between $U$ and $V$ and $V$ is a domain in $\mathbb{R}^k$. 

 (3) By (2), $U$ is an open set in $T(\Gamma')$. On the other hand, $T(\Gamma')$ is embeded in $T(\Gamma)$. This embedding is induced by the group inclusion $\Gamma\vartriangleleft\Gamma'$. Therefore, $HB(\mathbb{H}^2,\Gamma') = T_{[id]}T(\Gamma') = T_{[id]}U$. 
 \end{proof}

After these preparations, we obtain the following proposition. It is a generalization to \cite[Theorem 37]{schaller1999systoles} and \cite[Proposition 6.3]{fortier2019hyperbolic}.

\begin{proposition}
		If $R\in X_g^G $ realizes the maxima of the systole function on $X_g^G$, i.e. 
		\[
				\sys R \ge \sys S, \forall S \in X_g^G.
		\]
		then $R$ is a critical point of the systole function in $\mathcal{T}_g$. 
		\label{prop_euc}
\end{proposition}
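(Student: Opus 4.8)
The plan is to use Akrout's characterization of critical points (eutaxy, Definition \ref{defi_eut}) together with the local description of $X_g^G$ from Proposition \ref{prop_local_xgg}. Suppose for contradiction that $R\in X_g^G$ maximizes the systole on $X_g^G$ but is not a critical point of $\sys$ on $\mathcal{T}_g$. By Definition \ref{defi_eut}, non-eutaxy means there is a tangent vector $v\in T_{[R]}\mathcal{T}_g$ with $\mathrm{d}l_\alpha(v)>0$ for \emph{every} $\alpha\in S(R)$ (the strict inequality being arrangeable after replacing $v$ by a vector in the interior of the relevant half-space, since the set $S(R)$ of shortest geodesics is finite — see the Remark after the definition of $\sys$). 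Moving in the direction $v$ strictly increases every length function in $S(R)$ to first order, hence strictly increases $\sys$ near $R$ along the corresponding path.

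The key point — and this is exactly the ``key point of the generalization'' flagged in the introduction — is to upgrade this vector $v$ to a $G$-\emph{invariant} one so that the increasing path stays inside $X_g^G$. First I would average: set $\bar v := \frac{1}{|G|}\sum_{g\in G} g_* v$. Because $G$ acts on $T_{[R]}\mathcal{T}_g$ by isometries permuting the geodesics in $S(R)$ (the automorphism group of $R$ permutes shortest geodesics, so $\mathrm{d}l_{g\cdot\alpha}(g_*v)=\mathrm{d}l_\alpha(v)$), one gets $\mathrm{d}l_\alpha(\bar v)=\frac{1}{|G|}\sum_g \mathrm{d}l_{g^{-1}\alpha}(v)>0$ for every $\alpha\in S(R)$, since every summand is positive. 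Thus $\bar v$ is a $G$-invariant tangent vector along which $\sys$ strictly increases to first order. By Lemma \ref{lem_group_invariant}, $\bar v\in HB(\mathbb{H}^2,\Gamma')$, and by part (3) of Proposition \ref{prop_local_xgg}, $HB(\mathbb{H}^2,\Gamma')=T_{[\mathrm{id}]}U$ with $U\subset X_g^G$ an open neighborhood of $R$ in $X_g^G$ realized by $G$-invariant Beltrami coefficients. Hence there is a path $t\mapsto R_t\in U\subset X_g^G$ with $R_0=R$ and $\dot R_0=\bar v$.

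Along this path, $\sys(R_t)$ is a min of finitely many smooth functions $l_\alpha$ near $t=0$; for $\alpha\notin S(R)$ we have $l_\alpha(R)>\sys(R)$ so these stay irrelevant for small $t$, while for $\alpha\in S(R)$ we have $\frac{d}{dt}\big|_{0^+}l_\alpha(R_t)=\mathrm{d}l_\alpha(\bar v)>0$. Therefore $\sys(R_t)>\sys(R)$ for small $t>0$, with $R_t\in X_g^G$ — contradicting the maximality of $R$ on $X_g^G$. This proves $R$ is eutactic, hence a critical point of $\sys$ on $\mathcal{T}_g$ by Akrout's theorem (Theorem \ref{thm_akrout} and the eutaxy characterization following it).

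The main obstacle I anticipate is the equivariance bookkeeping in the averaging step: one must be careful that the action of $G=\langle\text{the relevant mapping classes}\rangle$ on $T_{[R]}\mathcal{T}_g$ really does permute $S(R)$ (this uses that $G$ lifts to honest isometries of the hyperbolic surface $R$ since $R\in X_g^G$, i.e. $G\le \mathrm{Aut}(R)$) and that $g_*$ intertwines $\mathrm{d}l_\alpha$ with $\mathrm{d}l_{g\cdot\alpha}$ correctly, so that the averaged functional is still strictly positive on $S(R)$ rather than merely nonnegative. A secondary technical point is passing from the ``weak'' non-eutaxy (a $v$ with $\mathrm{d}l_\alpha(v)\ge 0$ for all $\alpha$ and $>0$ for some) in Definition \ref{defi_eut} to a genuinely strictly-positive $v$; this is a standard convex-geometry argument (if $0$ is not in the interior of the convex hull of $\{\mathrm{d}l_\alpha|_R\}$, a separating hyperplane gives a strictly positive functional after a small perturbation), but it needs the finiteness of $S(R)$ and must be done before averaging so that averaging preserves strict positivity.
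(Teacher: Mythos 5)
Your averaging step and your use of Proposition \ref{prop_local_xgg} and Lemma \ref{lem_group_invariant} are in the same spirit as the paper, but the core of your argument has a genuine gap: the claimed upgrade from non-eutaxy to a vector $v$ with $\mathrm{d}l_\alpha(v)>0$ for \emph{every} $\alpha\in S(R)$ is false in general. Negating Definition \ref{defi_eut} only gives $v$ with $\mathrm{d}l_\alpha(v)\ge 0$ for all $\alpha$ and $>0$ for \emph{some} $\alpha$; when $0$ lies on the boundary (rather than the exterior) of the convex hull of $\{\mathrm{d}l_\alpha|_R\}$ — e.g.\ two of the differentials are opposite, as at a ``semi-eutactic'' point — the differentials do not lie in any open half-space, so no separating-hyperplane perturbation can make all of them strictly positive. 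Your contradiction then collapses: with only $\mathrm{d}l_\alpha(\bar v)\ge 0$, the curves with vanishing first derivative may shorten at second order along your path $R_t$, so you cannot conclude $\sys(R_t)>\sys(R)$, and the maximality of $R$ on $X_g^G$ is not contradicted. (This first-order argument could be repaired by taking $R_t$ to be the Weil--Petersson geodesic with initial vector $\bar v$, which stays in $X_g^G$ by $G$-invariance, and invoking Wolpert's strict convexity of length functions along WP geodesics; but that is an additional substantive ingredient your proposal does not supply.)

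The paper avoids this issue by not arguing through a strictly increasing direction at all. It applies Akrout's criterion (Theorem \ref{thm_akrout}) to the restricted function $\sys|_U$ on the $G$-invariant locus $U\subset X_g^G$ (a manifold with $T_{[\mathrm{id}]}U=HB(\mathbb{H}^2,\Gamma')$ by Proposition \ref{prop_local_xgg}): since $\sys|_U$ is a topological Morse function and $R$ is a maximum point of it, $R$ is a critical, hence eutactic, point of $\sys|_U$. Then the averaging identity $\mathrm{d}l_\alpha(\nu)=\sum_{g\in G}\mathrm{d}l_{g(\alpha)}(\mu)$ with $\nu=\sum_{g}g_*\mu$ transfers eutaxy to the full tangent space: if $\mathrm{d}l_\alpha(\mu)\ge 0$ for all $\alpha\in S(R)$, then $\mathrm{d}l_\alpha(\nu)\ge 0$, so eutaxy on $U$ forces $\mathrm{d}l_\alpha(\nu)=0$, and since this is a sum of nonnegative terms containing $\mathrm{d}l_\alpha(\mu)$, each term, in particular $\mathrm{d}l_\alpha(\mu)$, vanishes. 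You should restructure your argument along these lines (or add the WP-convexity repair); as written, the strict-positivity reduction on which everything hinges does not hold.
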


\begin{proof}
		We assume that $R$ realizes the maxima of $\sys$ on $X_g^G$, $S(R)$ is the set of systoles of $R$, $\Gamma$ is a Fuchsian model of $R$ and $\Gamma'\vartriangleright\Gamma$ is a Fuchsian model of $R/G$. 

		For $\mu \in HB(\mathbb{H}^2,\Gamma)$, if $\forall \alpha \in S(R)$, $\mathrm d l_\alpha(\mu)\ge 0$, we consider $\nu = \sum_{g\in G}g_* \mu$ then we have
		\begin{equation}
				\mathrm d l_\alpha(\nu) = \mathrm d l_\alpha\left( \sum_{g\in G}g_* \mu \right) = \sum_{g\in G}\mathrm d l_\alpha(g_*\mu) = \sum_{g\in G} \mathrm d l_{g(\alpha)}(\mu) \ge \mathrm d l_{\alpha}(\mu) \ge 0. 
				\label{for_sum}
		\end{equation}
		\begin{remark}
				This formula is the (6.1) in \cite{fortier2019hyperbolic}. 
		\end{remark}

		The vector $\nu = \sum_{g\in G}g_* \mu$ is in $HB(\mathbb{H}^2,\Gamma')$. We consider $U = \left\{ w^\nu|\nu\in HB(\mathbb{H}^2,\Gamma'), \|\nu\|_\infty<\varepsilon_0 \right\}$. By Proposition \ref{prop_local_xgg} (2), $U$ is a domain and therefore a manifold. If $U$ is small enough, for any point $S\in U$, there exists $\alpha\in S(R)$ such that $\sys(S) = l_\alpha(S)$. The Hessian of $l_\alpha|_U$ is positively definite since the Hessian of $l_\alpha$ is positively definite. Then by Theorem \ref{thm_akrout}, $\sys|_U$ is a topological Morse function. 

		Since $R$ realizes the maxima of $\sys|_{X_g^G}$, $R$ realizes the maxima of $\sys|_U$ and $R$ is a critical point of $\sys|_U$. By Proposition \ref{prop_local_xgg}(3), $HB(\mathbb{H}^2,\Gamma')$ is the tangent space of $U$ at the base point. By Definition \ref{defi_eut}, for $\nu\in HB(\mathbb{H}^2,\Gamma')$, if $\mathrm d l_{\alpha}(\nu)\ge 0$, $\forall \alpha\in S(R)$, then $\mathrm d l_{\alpha}(\nu) = 0$, $\forall \alpha\in S(R)$. 

		Therefore, by (\ref{for_sum}), for $\mu\in HB(\mathbb{H}^2,\Gamma)$, if $\mathrm d l_{\alpha}(\mu)\ge 0$, $\forall \alpha\in S(R)$, then $\mathrm d l_{\alpha}(\mu) = 0$, $\forall \alpha\in S(R)$. By Definition \ref{defi_eut}, $R$ is an eutactic surface and therefore a critical point of the systole function. 
\end{proof}

		\section{Small distance}
		\label{sec_small}
		\subsection{Construction of $S_g^1$ and $S_g^2$}
		\label{sec_construct}
		The surface $S^1_g$ is initially constructed in \cite{anderson2011small}, while $S^2_g$ is initially constructed in \cite{bai2019maximal}. We briefly construct these two surfaces for completeness and this construction implies how to atbain the Teichm\"uller distance between the two surfaces. 

We first construct a family of genus $g$ hyperbolic surfaces denoted by $\{S_g(c,t)\}$, each surface in this family is determined by two parameters $c$ and $t$
		for $c>0$ and $0\le t\le c/2$. The example $S^1_g$ is a $S_g(c_1,0)$ surface for some $c_1>0$ while the example $S^2_g$ is a $S_g(c_2,t_2)$ surface for some $c_2,t_2>0$

		We let $n\ge3$ and pick two 
		isometric right-angled hyperbolic polygons with $2n$ edges, admitting an order $n$ rotation. Two such polygons are able to be glued to an $n$-holed sphere admitting the order $n$ rotation extended from the polygons. 
		By this rotation, all boundary curves of this $n$-holed sphere have the same length. 
		The geometry of the $n$-holed sphere is determined by the length of its boundary curves. We denote this length as $c$ and the corresponding $n$-holed sphere by $S(c)$.  We call the boundary curves of $S(c)$ {\emph cuffs} and the edges of the polygons contained in the interior of $S(c)$ {\emph seams}. By the rotation symmetry, all seams also have the same length. 

		We pick two isometric $n$-holed spheres and glue them along their boundary curves, getting a closed surface. As shown in Figure \ref{fig_exp_1_2}, when gluing the two $n$-holed sphere,  we require that every cuff of one of the $n$-holed spheres is identified with a cuff in the other $n$-holed sphere, and 
		every seam of one $n$-holed sphere is half of a closed curve (denoted $\alpha_k$ for $k=1,2,\dots,n$) while the other half of $\alpha_k$ is a seam in the other $n$-holed sphere. 

		\begin{figure}[htbp]
				\centering
				\includegraphics{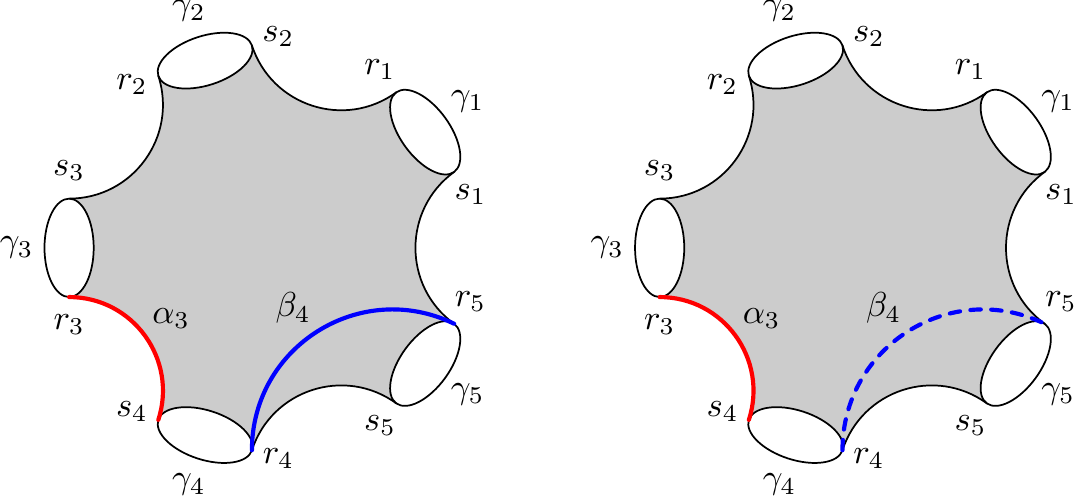}
				\caption{}
				\label{fig_exp_1_2}
		\end{figure}
		This constructed surface has genus $g=n-1$ and 
		admits the order $g+1$ rotation extended from the rotation on the $n$-holed spheres. 
		Geometry of this closed surface is determined by the cuff length $c$ of the two $n$-holed spheres. We denote this surface by $S_g(c,0)$. 
		
		Now we begin to construct $S_g(c,t)$ for $t>0$. In $S_g(c,0)$, we denote a cuff of the $n$ holed sphere by $\gamma_k$ for $k=1,2,\dots,g+1$. We conduct a Fenchel-Nielsen deformation on $S(c,0)$ along all the $\gamma_k$ curves. The time and orientation of the deformation at every $\gamma_k$ is the same so that the resulting surface preserves the order $g+1$ otation on $S_g(c,0)$. We assume the time of the Fenchel-Nielsen deformation is $t$ and denote the resulting surface as $S_g(c,t)$. 

		There is a $c_1>0$ such that on the surface $S_g(c_1,0)$, $l(\alpha_k)= l(\gamma_k)$. This surface is the surface $S^1_g$. The shortest geodesics of $S^1_g$ consist of $\alpha_k$ and $\gamma_k$ for $k=1,2,\dots,g+1$ by the proof of \cite[Theorem 3]{anderson2011small}. 

		In a surface $S_g(c,t)$, we let $\beta_k$ be the image of $\alpha_k$ by a Dehn twist along $\gamma_{k+1}$ (Figure \ref{fig_exp_1_2}). Orientation of this Dehn twist is required to be opposite to the Fenchel-Nielsen deformation so that length of $\beta_k$ decreases as $t$ increases when $t$ is closed to $0$. 

		There is a pair $(c_2,t_2)$ such that on the surface $S_g(c_2, t_2)$, $l(\alpha_k)=l(\beta_k) = l(\gamma_k)$. This surface is the surface $S^2_g$. The shortest geodesics of $S^2_g$ consist of $\alpha_k$, $\beta_k$ and $\gamma_k$ for $k=1,2,\dots,g+1$ by \cite[Proposition 4]{bai2019maximal}. 

		\subsection{Symmetry on $S(c,t)$ }

		By the construction of the surface $S_g(c,t)$, for any $c>0$ and $0\le t\le c/2$, $S(c,t)$ admits an order $g+1$ rotation rotation, denoted $\sigma$ (Figure \ref{fig_exp_1_3}). Besides this rotation, $S_g(c,t)$ admits an order $2$ rotation $\tau$ exchanging the two $n$-holed spheres (Figure \ref{fig_exp_1_6}) and an order $2$ rotation $\varsigma$ extended from an order $2$ rotation on one of the $n$-holed spheres. We denote the symmetric group on $S(c,t)$ generated by $\sigma$, $\tau$ and $\varsigma$ by $G$. 

		\begin{figure}[htbp]
				\begin{subfigure}{.45\linewidth}
						\centering
						\includegraphics{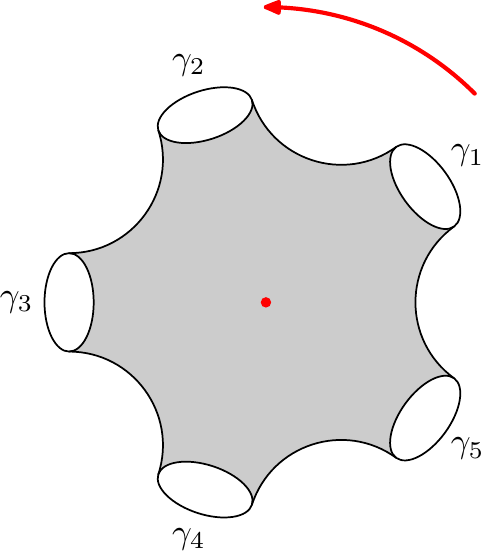}
						\caption{The rotation $\sigma$.}
						\label{fig_exp_1_3}
				\end{subfigure}
				\begin{subfigure}{.45\linewidth}
						\centering
						\includegraphics{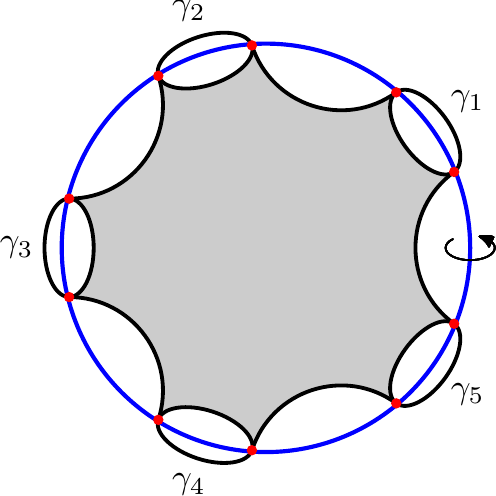}
						\caption{The rotation $\tau$.}
						\label{fig_exp_1_6}
				\end{subfigure}

				\begin{subfigure}{.45\linewidth}
						\centering
						\includegraphics{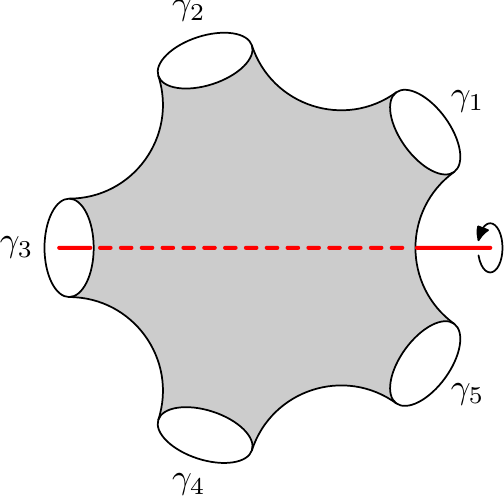}
						\caption{The rotation $\varsigma$.}
						\label{fig_exp_1_4}
				\end{subfigure}
				\begin{subfigure}{.45\linewidth}
						\centering
						\includegraphics{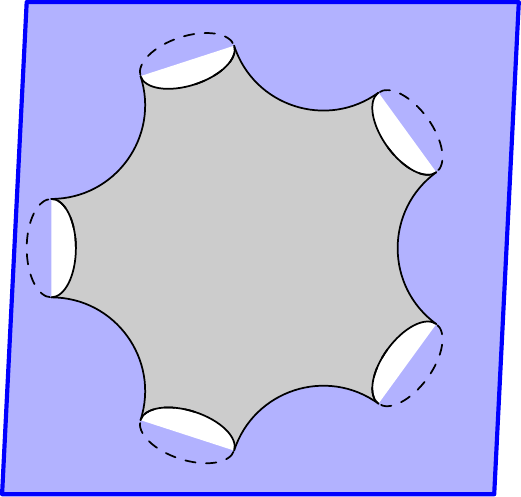}
						\caption{The reflection $\rho$.}
						\label{fig_exp_1_5}
				\end{subfigure}
				\caption{}
				\label{fig_iso}
		\end{figure}

		On the surface $S_g(c,0)$, there is a reflection $\rho$ extended from the reflection on one of the $n$-holed spheres exchanging the two polygons of the $n$-holed sphere (Figure \ref{fig_exp_1_5}). The symmetric group generated by $\sigma$, $\tau$, $\varsigma$ and $\rho$ is denoted by $\bar{G}$. 

		\begin{remark}
				For a reflection on the $n$-holed sphere, this reflection can be extended to the whole surface $S_g(c,t)$ only if $t=0$ or $t=c/2$. 

				The reflection on $S_g(c,c/2)$ (denoted by $\rho_{c/2}$) is not conjugate to $\rho$. This is because their fixed point sets are different. The fixed points of $\rho$ on $S_g(c,0)$ consist of $g+1$ curves (the $\beta_k$ curves), whille fixed points of $\rho_{c/2}$ consist of one curve (when $g$ is even) or two curves (when $g$ is odd), see Figure \ref{fig_conj}. 
				\label{rem_conj}
		\end{remark}
		\begin{figure}[htbp]
				\begin{subfigure}{.45\linewidth}
						\centering
						\includegraphics{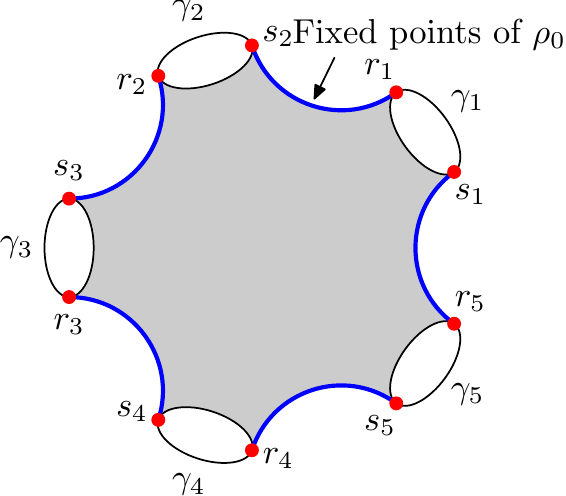}
						\caption{The fixed point set of $\rho$ on $\Sigma_g(c,0)$.}
						\label{fig_exp_1_8}
				\end{subfigure}
				\begin{subfigure}{.45\linewidth}
						\centering
						\includegraphics{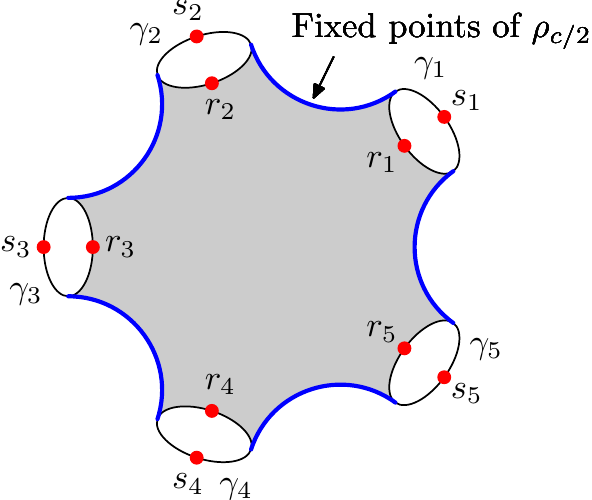}
						\caption{The fixed point set of $\rho_{c/2}$ on $\Sigma_g(c,c/2)$.}
						\label{fig_exp_1_9}
				\end{subfigure}

				\caption{}
				\label{fig_conj}
		\end{figure}

		The surface $S_g^1$ has been proved to be a critical point of the systole function, see \cite[Example 4.2, Proposition 6.3]{fortier2019hyperbolic}. 

		On the other hand, it is proved in \cite{bai2019maximal} that the surface $S_g^2$ is the surface with the maximal systole among the surfaces admitting the action of $G$. 
		Then immediately by Proposition \ref{prop_euc}, $S_g^2$ is a critical point of the systole function. 

		Hence we have 
		\begin{proposition}
				The surfaces $S_g^1$ and $S_g^2$ are critical points of the systole function. 
				\label{prop_sigma12}
		\end{proposition}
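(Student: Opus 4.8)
The plan is to handle $S_g^1$ and $S_g^2$ separately; for the first I would quote the literature directly, and for the second feed the symmetry group of the previous subsection into Proposition~\ref{prop_euc}.

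For $S_g^1=S_g(c_1,0)$ there is essentially nothing to do beyond citing: Fortier-Bourque shows in \cite[Example 4.2, Proposition 6.3]{fortier2019hyperbolic} that $S_g^1$ is eutactic, hence a critical point of $\sys_g$ on $\mathcal T_g$. (One could instead run the argument below with the larger group $\bar G=\langle\sigma,\tau,\varsigma,\rho\rangle$, provided one first checks that $S_g^1$ maximises the systole over $X_g^{\bar G}$; but that is precisely the content of the cited references, so there is no advantage in redoing it.)

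For $S_g^2=S_g(c_2,t_2)$ the argument is a direct application of Proposition~\ref{prop_euc} in three steps. First, the group $G=\langle\sigma,\tau,\varsigma\rangle$ of the previous subsection is a finite subgroup of $\mathrm{MCG}(S_g)$: each generator has finite order, and $\sigma,\tau,\varsigma$ are realised simultaneously as orientation-preserving isometries of $S_g(c,t)$, so $G$ is the image of a subgroup of the (finite) isometry group of such a surface under the injection $\mathrm{Isom}^+\hookrightarrow\mathrm{MCG}$. Second, by construction $S_g^2$ admits the $G$-action, i.e.\ $S_g^2\in X_g^G$ in the sense of the definition preceding Proposition~\ref{prop_euc}. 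Third — this is the external input — \cite[Proposition 4]{bai2019maximal} shows that $S_g^2$ has the largest systole among all genus $g$ hyperbolic surfaces carrying the $G$-action, so $\sys(S_g^2)\ge\sys(S)$ for every $S\in X_g^G$. With these three facts in hand, Proposition~\ref{prop_euc} applies verbatim and yields that $S_g^2$ is a critical point of the systole function on $\mathcal T_g$.

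The only genuinely delicate point is the matching in the second and third steps: one has to be sure that the space over which \cite{bai2019maximal} optimises coincides with $X_g^G$ for the present $G$ (same group, same meaning of ``admitting the action''), that the maximiser exists and equals $S_g^2$ (part of what \cite{bai2019maximal} establishes), and that the hypothesis of Proposition~\ref{prop_euc} asks for a maximiser over all of $X_g^G$ rather than merely over the two-parameter slice $\{S_g(c,t)\}$ — which is fine precisely because the cited maximality statement is over the full space of $G$-symmetric surfaces. No new estimate is required; the proposition is a corollary of Proposition~\ref{prop_euc} together with \cite{fortier2019hyperbolic} and \cite{bai2019maximal}.
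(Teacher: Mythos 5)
Your proposal is correct and follows essentially the same route as the paper: $S_g^1$ is handled by citing \cite[Example 4.2, Proposition 6.3]{fortier2019hyperbolic}, and $S_g^2$ by combining the maximality of its systole among $G$-symmetric surfaces from \cite{bai2019maximal} with Proposition \ref{prop_euc}. The extra verifications you spell out (finiteness of $G$, $S_g^2\in X_g^G$, and that the optimisation in \cite{bai2019maximal} is over all of $X_g^G$) are exactly the implicit matching the paper relies on, so there is no discrepancy.
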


		\subsection{Distance}
		The aim of this subsection is to bound the Teichm\"uller distance between $S^1_g$ and $S^2_g$. 

		To get an upperbound of $d_{\mathcal T}(S^1_g,S^2_g)$, we need an intermediate surface. We assume the parameter $S^1_g = S_g(c_1,0)$ and $S^2_g = S_g(c_2,t_2)$. Then the intermediate surface is the surface $S_g(c_2,0)$. Distance between $S^1_g$ and $S^2_g$ is bounded from above by the sum of $d_{\mathcal T}(S_g^1,S(c_2,0))$ and $d_{\mathcal T}(S(c_2,0), S_g^2)$. 
		\subsubsection{The distance between $S_g^1=S(c_1,0)$ and $S(c_2,0)$}
		We estimate this distance in three steps:

		(1) Prove $\left\{ S(c,0)|c>0 \right\}$ is a Teichm\"uller geodesic induced by a Jenkins-Strebel differential on some surface $S(c,0)$. 

		(2) Calculate distance between two points on the Teichm\"uller geodesic. This distance is given by a ratio of extremal lengths of a curve on the two surfaces.

		(3) Estimate this distance expressed in extremal lengths by hyperbolic lengths using Maskit's Theorem (Theorem \ref{thm_maskit}). 

		For $c>0$, on the surface $S(c,0)$, we consider the cuffs of the $n$-holed spheres in $S(c,0)$, namely $\left\{ \gamma_k \right\}_{k=1}^{g+1}$ and assign to each $\gamma_k$ a positive number $b$. Then by Theorem \ref{thm_bi}, $\left\{ (\gamma_k,b) \right\}_{k=1}^{g+1}$ induces a quadratic differential $q$ on $S(c,0)$. 

		\begin{lemma}
				The quadratic differential $q\in QD(S_g(c,0))$ is invariant under the action of $\bar{G}$.
				\label{lem_inv}
		\end{lemma}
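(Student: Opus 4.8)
The plan is to deduce the invariance from the uniqueness clause of Theorem~\ref{thm_bi}: the weighted curve system defining $q$ is itself $\bar G$-invariant, so the Jenkins--Strebel differential it determines must be $\bar G$-invariant as well.

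First I would record that every element of $\bar G$ is an isometry of the hyperbolic metric on $S_g(c,0)$, hence a conformal or anti-conformal automorphism of the underlying Riemann surface; $\sigma$, $\tau$, $\varsigma$ are orientation-preserving and $\rho$ is orientation-reversing. Next I would check that $\bar G$ permutes the set of cuffs $\{\gamma_k\}_{k=1}^{g+1}$ up to isotopy: by construction $\sigma$ cyclically permutes them, $\varsigma$ and $\rho$ act within one $n$-holed sphere and permute its boundary cuffs, and $\tau$ preserves each $\gamma_k$ setwise. Since all the $\gamma_k$ carry the same height $b$, the weighted multicurve $\{(\gamma_k,b)\}_{k=1}^{g+1}$ is invariant under $\bar G$.

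Now take an orientation-preserving $\phi\in\bar G$ and consider the pushforward $\phi_*q\in QD(S_g(c,0))$. Since $\phi$ maps $F_h(q)$ onto $F_h(\phi_*q)$ and sends the free homotopy class of each $\gamma_k$ to that of some $\gamma_j$, the nonsingular leaves of $F_h(\phi_*q)$ are again closed curves freely homotopic to the $\gamma_k$'s, and the characteristic ring domain of $\gamma_j$ for $\phi_*q$ is the $\phi$-image of the characteristic ring domain of $\phi^{-1}(\gamma_j)$ for $q$; as $\phi$ is a conformal isometry from $(S_g(c,0),|q|)$ to $(S_g(c,0),|\phi_*q|)$, this image again has height $b$. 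Thus $\phi_*q$ satisfies the three conditions of Theorem~\ref{thm_bi} for the data $\{(\gamma_k,b)\}$, and the uniqueness assertion forces $\phi_*q=q$. For the anti-conformal generator $\rho$, the natural action on quadratic differentials is $q\mapsto\overline{\rho^*q}$; complex conjugation preserves the condition picking out horizontal leaves, so $\overline{\rho^*q}$ is once more a Jenkins--Strebel differential with characteristic curves $\gamma_k$ and all heights equal to $b$, whence uniqueness gives $\overline{\rho^*q}=q$. Since $\sigma,\tau,\varsigma,\rho$ generate $\bar G$, the differential $q$ is $\bar G$-invariant.

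The main obstacle I expect is the careful handling of the orientation-reversing element $\rho$: one must confirm that the correct notion of invariance is the one for the $q\mapsto\overline{\rho^*q}$ action, and that this action indeed carries Jenkins--Strebel differentials to Jenkins--Strebel differentials preserving the prescribed heights (equivalently, that $\rho$ is an isometry of the singular flat metric $|q|$ taking horizontal leaves to horizontal leaves). The orientation-preserving part of the argument is essentially immediate once the weighted multicurve is seen to be $\bar G$-invariant.
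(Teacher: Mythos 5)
Your argument is correct and is essentially the paper's proof: both deduce $\bar G$-invariance of $q$ from the uniqueness clause of Theorem~\ref{thm_bi} applied to the $\bar G$-invariant weighted curve system $\{(\gamma_k,b)\}_{k=1}^{g+1}$. Your additional care with the orientation-reversing generator $\rho$ (using the $q\mapsto\overline{\rho^*q}$ action) is a point the paper's one-line proof glosses over, but it does not change the approach.
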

		\begin{proof}
				For $g\in \bar{G}$, the quadratic form $g^*q$ is induced by the set $\left\{ (g^{-1}(\gamma_k),b) \right\}_{k=1}^{g+1}$. By the action of $\bar{G}$ on $S_g(c,0)$, $\left\{ (g^{-1}(\gamma_k),b) \right\}_{k=1}^{g+1} =\left\{(\gamma_k,b) \right\}_{k=1}^{g+1} $. 
				Therefore $g^*q = q$ and $q$ is invariant. 
		\end{proof}

		We consider the Teichm\"uller geodesic induced by $(S(c,0),q)$. 
		\begin{lemma}
				We denote the  Teichm\"uller geodesic induced by $(S(c,0),q)$ as $l$. Then the Teichm\"uller geodesic $l$
				coincides with the curve $\left\{ S_g(c,0)|c>0 \right\}$. 
				\label{lem_symm}
		\end{lemma}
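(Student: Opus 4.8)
The plan is to identify both the geodesic $l$ and the family $\mathcal{F}:=\{S_g(c,0)\mid c>0\}$ with the full $\bar G$--fixed locus $X_g^{\bar G}\subset\mathcal{T}_g$, and then to observe that a complete Teichm\"uller geodesic contained in a connected $1$--dimensional submanifold must exhaust it.

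First I would argue that $\mathcal{F}=X_g^{\bar G}$. The inclusion $\mathcal{F}\subseteq X_g^{\bar G}$ is immediate from the construction, since $\bar G$ acts by isometries on each $S_g(c,0)$. Conversely, a hyperbolic structure on a point of $X_g^{\bar G}$ descends to one on the quotient orbifold $S_g(c_0,0)/\bar G$; a direct inspection of this orbifold shows that its Teichm\"uller space is one real dimensional, with the single modulus a strictly monotone function of the common cuff length $c$, so that every $\bar G$--symmetric marked surface is $S_g(c,0)$ for a unique $c>0$. It is precisely here that the reflection $\rho$ is needed: the subgroup $G$ alone leaves the larger family $\{S_g(c,t)\}$ invariant, whereas $\rho$ extends to the surface only when $t=0$ (cf.\ Remark~\ref{rem_conj}). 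By Proposition~\ref{prop_local_xgg} (applied to $\bar G$) the set $X_g^{\bar G}$ is then a connected $1$--manifold, hence homeomorphic to $\mathbb{R}$, and $\mathcal{F}$ is a $1$--dimensional submanifold of $\mathcal{T}_g$ that fills it up (the surfaces degenerate as $c\to 0$ and as $c\to\infty$).

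Next I would check that $l\subseteq X_g^{\bar G}$. Every $h\in\bar G$, regarded as a mapping class, fixes the point $S_g(c_0,0)\in\mathcal{T}_g$ and, by Lemma~\ref{lem_inv}, fixes the quadratic differential $q$. Since the mapping class group acts on $\mathcal{T}_g$ by biholomorphic Teichm\"uller isometries carrying the Teichm\"uller geodesic through $(X,\phi)$ onto the one through $(h(X),h_\ast\phi)$, each such $h$ maps $l$ onto itself; hence $l\subseteq X_g^{\bar G}$.

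Finally, $l$ is a complete Teichm\"uller geodesic, hence a properly embedded $1$--submanifold of $\mathcal{T}_g$; inside the $1$--manifold $X_g^{\bar G}$ it is both open (invariance of domain) and closed (properness), so it is a nonempty union of connected components of $X_g^{\bar G}$. As $X_g^{\bar G}\cong\mathbb{R}$ is connected, $l=X_g^{\bar G}=\mathcal{F}$, which is the assertion. The step I expect to require the most care is the identification $\mathcal{F}=X_g^{\bar G}$, that is, determining the signature of $S_g(c_0,0)/\bar G$ and verifying that its Teichm\"uller space is exactly one dimensional with $c$ a global coordinate; once that dimension is pinned down, the remaining steps are soft point--set topology together with the $\bar G$--invariance of $q$ already supplied by Lemma~\ref{lem_inv}.
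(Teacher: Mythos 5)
Your strategy (identify both $l$ and $\mathcal F=\{S_g(c,0)\mid c>0\}$ with the fixed locus $X_g^{\bar G}$, then use an open--closed--connected argument) is genuinely different from the paper's, and it could be made to work, but as written it has a real gap exactly at its load-bearing step: the claim that $X_g^{\bar G}$ is a connected one-dimensional submanifold equal to $\mathcal F$ is only asserted (``a direct inspection of this orbifold''). Making it precise requires (i) a Riemann--Hurwitz computation of the quotient orbifold of the orientation-preserving part $G$ (one gets a sphere with cone points of orders $(2,2,2,g+1)$, hence a one-complex-dimensional Teichm\"uller space, so that the $\rho$-fixed locus inside it is one-real-dimensional), and (ii) a \emph{global} statement that the fixed-point set of the finite subgroup $\bar G$ of the extended mapping class group is homeomorphic to the Teichm\"uller space of the quotient orbifold and in particular connected. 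Neither is supplied by Proposition \ref{prop_local_xgg}: that proposition is local at a chosen basepoint, and it is formulated for Fuchsian groups, whereas $\bar G$ contains the orientation-reversing $\rho$, so the relevant uniformizing group is an NEC group and the harmonic-Beltrami machinery needs adaptation. Without connectedness of $X_g^{\bar G}$ the soft topology in your last step proves only that $l$ is a union of components, not that $l=\mathcal F$.

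There is also a smaller logical jump in your second step: from ``each $h\in\bar G$ maps $l$ onto itself'' you cannot directly conclude $l\subset X_g^{\bar G}$, because a finite-order Teichm\"uller isometry fixing the basepoint and preserving $l$ setwise could a priori reflect $l$ about the basepoint; you must rule this out by noting that $h$ preserves the horizontal and vertical foliations of $q$ separately (it permutes the curves $\gamma_k$ and preserves the heights), so it fixes each $X_t$ rather than swapping $X_t$ with $X_{-t}$. The paper avoids both issues by arguing pointwise along $l$: for $S'\in l$ the Beltrami coefficient $t\bar q/q$ of the Teichm\"uller map is $\bar G$-invariant, so $S'$ carries the $\bar G$-action by Proposition \ref{prop_local_xgg}(1), and then the images of the cuffs $\gamma_k$ cut $S'$ into two symmetric $n$-holed spheres, forcing $S'=S_g(c',0)$; this gives $l\subset\mathcal F$ with bare hands, after which completeness of the geodesic yields equality, with no need to know the global structure of $X_g^{\bar G}$.
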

		\begin{proof}
				Since $q$ is $\bar{G}$-invariant by Lemma \ref{lem_inv}, For any surface $S'\in l$  
				the Beltrami coefficient of the Teichm\"uller map $f:S(c,0)\to S'$ is $t\frac{\bar{q}}{q}$ for some $t\in (0,1)$. Hence this Beltrami coefficient is $\bar{G}$-invariant.  Then by Proposition \ref{prop_local_xgg} (1), 
				$\bar{G}$ isometrically acts on $S'$ by 
				\[f\circ g \circ f^{-1}: S'\to S' \]
				for any $g\in \bar{G}$. 

				Consider the set of cuffs of the $n$-holed spheres on $S_g(c,0)$, denoted $\left\{ \gamma_k \right\}_{k=1}^{g+1}$. Its image $\left\{ f(\gamma_k) \right\}_{k=1}^{g+1}$ in $S'$ cuts $S'$ into two $n$-holed spheres. Then $\bar{G}$ isometrically acts on these two $n$-holed spheres as $\bar{G}$ acts on the two $n$-holed spheres in $S(c,0)$ divided by $\left\{ \gamma_k \right\}_{k=1}^{g+1}$. Then $S'$ is a $S(c',0)$ surface, where $c'$ is the length of $f(\gamma_k)$ on $S'$. 

				Therefore the Teichm\"uller geodesic $l$ is contained in the curve $\left\{ S_g(c,0)|c>0 \right\}$. Then by the completeness of Teichm\"uller geodesics, $\left\{ S_g(c,0)|c>0 \right\}$ coincides with $l$. 
		\end{proof}

				Distance between two points on a Teichm\"uller geodesic has been given by (\ref{for_dist_extremal}). Now we are ready to acomplish the third step. 

		Finally we have 
		\begin{proposition}
				For $S_g^1=S_g(c_1,0)$ and $S_g{(c_2,0)}$, 
				 we have 
				\[
						d_{\mathcal{T}}(\Sigma_g^1,\Sigma_g^{1,2})\le 0.65. 
				\]
				\label{prop_dist_1}
		\end{proposition}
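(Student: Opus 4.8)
The plan is to follow the three-step scheme sketched before Proposition \ref{prop_dist_1}. By Lemma \ref{lem_symm}, the arc $\{S_g(c,0)\mid c>0\}$ is exactly the Teichm\"uller geodesic $l$ induced by $(S_g(c_1,0),q)$, where $q$ is the Jenkins–Strebel differential associated with the curve system $\{\gamma_k\}_{k=1}^{g+1}$ (the cuffs of the two $n$-holed spheres), each assigned height $b$. So $S_g^1=S_g(c_1,0)$ and $S_g(c_2,0)$ both lie on $l$, and by formula (\ref{for_dist_extremal}),
\[
d_{\mathcal T}(S_g^1,S_g(c_2,0))=\frac12\left|\log\frac{\Ext_{\gamma_1}(S_g(c_2,0))}{\Ext_{\gamma_1}(S_g^1)}\right|,
\]
where here $\gamma_1$ denotes the relevant horizontal leaf and I use the characteristic ring domain as the extremal-length competitor (as in the derivation of (\ref{for_dist_extremal})). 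Actually, the cleanest route is to bound the ratio of extremal lengths directly by the ratio of hyperbolic lengths via Maskit's Theorem \ref{thm_maskit}, applied on each of $S_g^1$ and $S_g(c_2,0)$ to the curve $\gamma_1$ using its collar.

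First I would pin down the hyperbolic lengths. On $S_g(c,0)$ the curve $\gamma_k$ has hyperbolic length $c$ by construction, while its seam length and the value $c_1$ are determined by the right-angled polygon identities; I would invoke the explicit description of $S_g^1$ from \cite{anderson2011small} (its shortest geodesics are the $\alpha_k,\gamma_k$ of equal length, with that common length an explicit constant), together with the value of $c_2$ coming from \cite{bai2019maximal}, to get numerical control of $c_1$ and $c_2$. Then, for each of the two surfaces, I would exhibit a definite-width collar around $\gamma_1$ — the collar lemma gives width $\arcsinh(1/\sinh(c/2))$, or I use the standard collar of half-width $w$ with $\sinh w=1/\sinh(c/2)$ — pick $\theta$ with $\arccosh\frac{1}{\cos\theta}$ equal to that width, and apply Theorem \ref{thm_maskit} to sandwich $\Ext_{\gamma_1}$ between $\frac{1}{\pi}l_{\gamma_1}$ and $\frac{1}{2\theta}l_{\gamma_1}$ on each surface. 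Combining the upper bound on $S_g(c_2,0)$ with the lower bound on $S_g^1$ (and vice versa) yields
\[
\frac12\log\frac{\Ext_{\gamma_1}(S_g(c_2,0))}{\Ext_{\gamma_1}(S_g^1)}\le \frac12\log\frac{\tfrac{1}{2\theta_2}\,c_2}{\tfrac{1}{\pi}\,c_1},
\]
and symmetrically with the roles reversed; taking the max of the two bounds controls $d_{\mathcal T}(S_g^1,S_g(c_2,0))$.

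The last step is purely numerical: substitute the explicit values of $c_1,c_2$ and the resulting $\theta$'s, and check that the bound comes out below $0.65$. I expect the main obstacle to be precisely this quantitative bookkeeping — the Maskit inequalities lose a factor of roughly $\pi/(2\theta)$ at each end, so unless $c_1$ and $c_2$ are genuinely close (which they should be, since $S_g^2$ is a small Fenchel–Nielsen perturbation of a surface near $S_g^1$) the crude two-sided estimate might overshoot $0.65$; one may need the sharper side of Maskit's inequality, or to note that for the relevant (small, $g$-independent) cuff lengths the collar is wide enough that $\theta$ is close enough to $\pi/2$ to make the multiplicative losses mild. A secondary point to handle carefully is that the estimate must be uniform in $g$: both $c_1$ and $c_2$ converge to definite limits as $g\to\infty$ (the $n$-holed sphere geometry stabilizes), so the bound is genuinely a constant, but I would state this convergence explicitly rather than leave it implicit.
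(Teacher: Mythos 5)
Your outline follows the same three-step scheme as the paper (Lemma \ref{lem_symm}, formula (\ref{for_dist_extremal}), Maskit), but there is a genuine gap at the heart of step (3). Formula (\ref{for_dist_extremal}) expresses the distance through the extremal lengths of $\gamma_k$ \emph{in the characteristic ring domains} $R_k^{(i)}$ of the Jenkins--Strebel differential, not on the whole surfaces, and monotonicity of extremal length goes the wrong way for your argument: since $R_k^{(2)}\subset S_g(c_2,0)$, one has $\Ext_{\gamma_k}(R_k^{(2)})\ge \Ext_{\gamma_k}(S_g(c_2,0))$, so Maskit's upper bound $\Ext_{\gamma_k}(S_g(c_2,0))\le \frac{1}{2\theta}l(\gamma_k)$ (or the same bound for a standard collar-lemma collar) does \emph{not} bound the quantity $\Ext_{\gamma_k}(R_k^{(2)})$ that enters (\ref{for_dist_extremal}). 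To get the needed upper bound one must exhibit an explicit annulus \emph{contained in} $R_k^{(2)}$, and this containment is exactly the nontrivial content of the paper's Lemma \ref{lem_hyp_dist}: by $\bar G$-invariance the ring domains $R_k^{(2)}$ are bounded by the geodesics joining the centers of the $(2g+2)$-gons to the midpoints of the seams, hence the collar of $\gamma_k^{(2)}$ of width $s_2/2$ (half the seam length, computed from the trirectangle identity $\sinh\frac{s_2}{2}\sinh\frac{c_2}{4}=\cos\frac{\pi}{g+1}$) lies inside $R_k^{(2)}$, and only then does Maskit give $\Ext_{\gamma_k}(R_k^{(2)})\le \frac{c_2}{2\theta}$ with $\cos\theta=1/\cosh(s_2/2)$. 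Your proposal never addresses why any collar lies inside the characteristic ring domain, and the collar-lemma width you invoke plays no role in the paper's estimate; the lower bound $\Ext_{\gamma_k}(R_k^{(1)})\ge c_1/\pi$ is the only half of your sandwich that survives.

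A second, smaller issue: the conclusion is quantitative, and you leave the numerics unverified (indeed you express doubt that the bound lands below $0.65$). In the paper this is precisely what Lemma \ref{lem_hyp_dist} plus the explicit values $c_1=4\arcsinh\sqrt{\cos\frac{\pi}{g+1}}$ and the formula for $c_2$ from \cite{bai2019maximal} deliver: the bound $\frac12\log\frac{\pi c_2}{2\theta c_1}$, checked to be at most $0.65$ uniformly in $g$. Also, your ``take the max of the two bounds'' step does not work as stated: to control $\left|\log\bigl(\Ext(R^{(2)})/\Ext(R^{(1)})\bigr)\right|$ in both directions you would need upper \emph{and} lower bounds on both ring-domain extremal lengths, whereas the argument (like the paper's) only produces a lower bound at $c_1$ and an upper bound at $c_2$, so the ordering of the two extremal lengths along the geodesic has to be used rather than sidestepped.
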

		\begin{proof}
				Recall that $c_1$ and $c_2$ are the systole of $S_g^1$ and $S_g^2$ respectively. Then $c_1 = 4\arcsinh\sqrt{\cos\frac{\pi}{g+1}}$ by \cite{anderson2011small} and $c_2$is given by the formula in \cite[Theorem 1]{bai2019maximal}. Then we use the following lemma to get the Teichm\"uller distance. 

		\begin{lemma}
				For hyperbolic surfaces $S_g(c_1,0)$ and $S_g{(c_2,0)}$ with $c_1<c_2$, the Teichm\"uller distance between these two surfaces is bounded from above by 
				\[
						\frac{1}{2}\log \frac{\pi c_2}{2\theta c_1}, 
				\]
				where 
				\[
						\cos \theta = \left( 1+\frac{\cos^2\frac{\pi}{g+1}}{\sinh^2 \frac{c_2}{4}} \right)^{-\frac{1}{2}}
				\]. 
				\label{lem_hyp_dist}
		\end{lemma}
		\begin{proof}
				For $i=1,2$, we let $\{ \gamma_k^{(i)} \}_{k=1}^{g+1}$ be the cuffs in $S_g(c_i,0)$, $q_i\in QD(S_g(c_i,0))$ be the quadratic differential induced by $\{ (\gamma_k^{(i)},b) \}_{k=1}^{g+1}$ for some $b>0$ and $R_k^{(i)}$ be the characteristic ring domain of $\gamma_k^{(i)}$. Then by Theorem \ref{thm_maskit}
				\begin{equation}
						\Ext_{\gamma_k^{(1)}}(R_k^{(1)})\ge \Ext_{\gamma_k^{(1)}}(S_g(c_1,0))\ge \frac{l(\gamma_k^{(1)})}{\pi} = \frac{c_1}{\pi}.
						\label{for_c1}
				\end{equation}

				The set of characteristic ring domains $\{ R_k^{(2)} \}_{k=1}^{g+1}$ is invariant under the $\bar{G}$-action. Then by the symmetry of $\bar{G}$, in $S_g(c_2,0)$, the ring domains $R_k^{(2)}$, $k=1,\dots,g+1$ are bounded by the hyperbolic geodesics, connecting a center of the $2n$-polygons and a middle point of the seams (Figure \ref{fig_exp_1_11}), otherwise $\{ R_k^{(2)} \}_{k=1}^{g+1}$ is not $\bar{G}$-invariant. 
				\begin{figure}[htbp]
						\centering
\begin{minipage}[t]{0.49\textwidth}
\centering
						\includegraphics{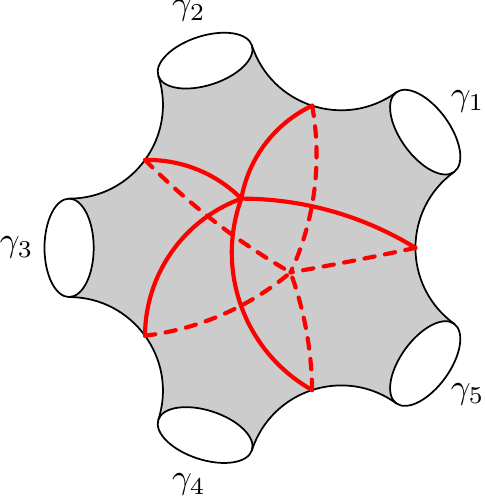}
						\caption{Charateristic ring domains}
						\label{fig_exp_1_11}
\end{minipage}
\begin{minipage}[t]{0.49\textwidth}
\centering
						\includegraphics{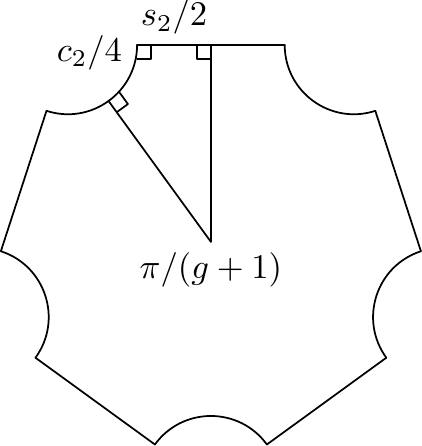}
						\caption{Calculate $s_2/2$}
						\label{fig_octagon_6}
\end{minipage}
				\end{figure}

				Therefore, if the seam length of $n$-holed spheres of $S_g(c_2,0)$ is $s_2$, then the collar $C_k$ of $\gamma_k^{(2)}$ with width $s_2/s$ is contained in the characteristic ring domain $R_k^{(2)}$. 

				The seam length $s_2$ is given by the formula in trirectangle (\ref{for_trirect_1}), see Figure \ref{fig_octagon_6}
				\begin{equation}
						\sinh \frac{s_2}{2}\sinh \frac{c_2}{4} = \cos \frac{\pi}{g+1}. 
						\label{for_s2}
				\end{equation}
				Therefore, by Theorem \ref{thm_maskit}
				\begin{equation}
						\Ext_{\gamma_k^{(2)}}(R_k^{(2)})\le\Ext_{\gamma_k^{(2)}}(C_k)\le \frac{l(\gamma_k^{(2)})}{2\arccos\frac{1}{\cosh({s_2}/{2})}} = \frac{c_2}{2\arccos\frac{1}{\cosh({s_2}/{2})}}
						\label{for_c2}
				\end{equation}
				By combining (\ref{for_dist_extremal}) (\ref{for_c1}) (\ref{for_c2}) and (\ref{for_s2}), this Lemma holds. 

		\end{proof}
		Proposition \ref{prop_dist_1} follows immediately by Lemma \ref{lem_hyp_dist}. 
		\end{proof}

		\subsubsection{The distance between $S_g(c_2,0)$ and $ S_g^2=S_g(c_2,t_2)$}
		Recall that $S_g(c_2,t_2)$ is obtained from $S_g(c_2,0)$ by a Fenchel-Nielsen deformation along the cuffs $\left\{ \gamma_k \right\}_{k=1}^{g+1}$ in $S(c_2,0)$ with time $t_2$. 
		If we can construct a homeomorphism $h:S_g(c_2,0)\to S_g(c_2,t_2)$ such that for a collar $C_k$ of $\gamma_k$ where $k=1,2,\dots,g+1$, $h$ is an isometry outside all these collars, then Teichm\"uller distance between $S_g(c_2,0)$ and $S_g(c_2,t_2)$ is bounded from above by the  $\frac{1}{2}\log K(h)$ fo the dilatation $K(h)$ and $K(h)$ equals $K(h|_{C_k})$ the dilatation of $h$ restricted on some $C_k$. 

		\begin{proposition}
				For $\Sigma_g^2$ and $\Sigma_g^{1,2}$, we have 
				\[
						d_{\mathcal{T}}(\Sigma_g^2,\Sigma_g^{1,2})\le 1.6450. 
				\]
				\label{prop_dist_2}
		\end{proposition}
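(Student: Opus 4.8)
The plan is to realize the Fenchel--Nielsen twist taking $\Sigma_g^{1,2}=S_g(c_2,0)$ to $\Sigma_g^2=S_g(c_2,t_2)$ by an explicit homeomorphism whose non-conformal behaviour is confined to a pairwise disjoint family of collars around the cuffs $\gamma_1,\dots,\gamma_{g+1}$, and then to bound the Teichm\"uller displacement by the dilatation of that homeomorphism via $d_{\mathcal T}(X,Y)\le\frac12\log K(f)$. First I would fix, for each $k$, an embedded collar $C_k$ of $\gamma_k$ in $S_g(c_2,0)$: by the $\bar G$-symmetry the natural choice is the collar bounded by the geodesics joining the centers of the $2n$-gons to the midpoints of the seams, exactly as in Lemma~\ref{lem_hyp_dist}, of half-seam width $w=s_2/2$ with $s_2$ given by (\ref{for_s2}); these $g+1$ collars are pairwise disjoint, and since cutting along all the $\gamma_k$ leaves the same pair of $n$-holed spheres for both surfaces, collars of the same width also sit inside $S_g(c_2,t_2)$. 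In Fermi coordinates $(\rho,s)$ on $C_k$, $|\rho|\le w$, $s\in\mathbb R/c_2\mathbb Z$, the hyperbolic metric is $d\rho^2+\cosh^2\!\rho\,ds^2$; passing to the conformal coordinate $\sigma=\arctan(\sinh\rho)$ turns $C_k$ into a flat cylinder $\{|\sigma|\le\Sigma\}\times(\mathbb R/c_2\mathbb Z)$ up to a conformal factor, with $\Sigma=\arctan(\sinh w)$ (the same angle that appears in Maskit's theorem).

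Next I would define $h\colon S_g(c_2,0)\to S_g(c_2,t_2)$ to be the canonical isometry on the complement of $\bigcup_k C_k$ (this is legitimate because the two surfaces differ only in how the cut pieces are reglued, so their complements-of-collars are isometric in an orientation- and marking-compatible way) and, on each $C_k$, the affine shear
\[
 (\sigma,s)\longmapsto\Bigl(\sigma,\ s+\tfrac{t_2}{2\Sigma}\,\sigma\Bigr)
\]
written in the conformal coordinates. The total shear across $C_k$ equals $t_2$, which is precisely the arc-length discrepancy between the two gluings of $\gamma_k$, so $h$ is a well-defined homeomorphism onto $S_g(c_2,t_2)$ in the correct marking that is conformal (indeed isometric) off the collars. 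Its Beltrami coefficient is constant on each $C_k$ with $|\mu|=\kappa/(2\sqrt{1+\kappa^2/4})$, where $\kappa=t_2/(2\Sigma)$, so
\[
 K(h)=K(h|_{C_k})=\frac{1+|\mu|}{1-|\mu|}=1+\frac{\kappa^2}{2}+\frac{\kappa}{2}\sqrt{\kappa^2+4},
\]
the value being independent of $k$ and of the rest of the surface. Hence $d_{\mathcal T}(\Sigma_g^{1,2},\Sigma_g^2)\le\frac12\log K(h)$.

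It then remains to bound $\kappa=t_2/\bigl(2\arctan(\sinh(s_2/2))\bigr)$ uniformly in $g$: here I would substitute the explicit value of $c_2$ from \cite[Theorem 1]{bai2019maximal}, the constraint $0\le t_2\le c_2/2$, and $s_2$ from (\ref{for_s2}) in terms of $c_2$ and $\cos\frac{\pi}{g+1}$, and verify that the resulting $\frac12\log K(h)$ never exceeds $1.6450$ (the estimate is far from tight but suffices, and together with Proposition~\ref{prop_dist_1} it gives the $2.3$ bound of Theorem~\ref{thm_dist_small}). The main obstacle I anticipate is bookkeeping rather than conceptual: extracting from the transcendental data defining $S_g^2$ a clean, uniform lower bound for the conformal height $2\Sigma=2\arctan(\sinh(s_2/2))$ of the collar and a matching upper bound for $t_2$; and, on the soft side, checking that the shear on each $C_k$ absorbs the Fenchel--Nielsen twist with the correct orientation and that the collars $C_k$ remain pairwise disjoint inside $S_g(c_2,t_2)$ after twisting, so that $h$ is genuinely globally defined with $K(h)=K(h|_{C_k})$.
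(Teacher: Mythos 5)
Your proposal is correct and follows the same overall strategy as the paper: realize the Fenchel--Nielsen twist by a homeomorphism $h\colon S_g(c_2,0)\to S_g(c_2,t_2)$ that is an isometry off pairwise disjoint collars of width $s_2/2$ about the cuffs $\gamma_k$ (with $s_2$ from (\ref{for_s2})), and bound $d_{\mathcal T}\le\frac12\log K(h)$. The difference is in the collar map itself. The paper prescribes that geodesics orthogonal to $\gamma_k$ go to geodesics, lifts to $\mathbb H^2$, and obtains a map $re^{i\varphi}\mapsto r\Phi(\varphi)e^{i\varphi}$ with the non-constant dilatation (\ref{for_phi})--(\ref{for_beltremi}), whose maximum is then evaluated numerically (Figure \ref{fig_dist_212}). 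You instead shear linearly in the conformal Fermi coordinate $\sigma=\arctan(\sinh\rho)$, which yields a constant Beltrami coefficient and the closed form $K=1+\tfrac{\kappa^2}{2}+\tfrac{\kappa}{2}\sqrt{\kappa^2+4}$ with $\kappa=t_2/(2\arctan\sinh(s_2/2))$; your identification of $2\arctan(\sinh(s_2/2))$ with the angle $2\theta$ of Theorem \ref{thm_maskit} is right, and your formulas for $|\mu|$ and $K$ check out. The soft points you flag are genuinely unproblematic: the split of the twist into $\pm t_2/2$ at the two ends is only a choice of Fermi origin on the target collar (equivalently one can shear from $0$ to $t_2$ with the same slope), so $h$ glues with the isometry of the complements, and disjointness of the collars follows from the same symmetry argument used in Lemma \ref{lem_hyp_dist}. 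What remains in your write-up, exactly as in the paper's proof, is the substitution of the explicit $(c_2,t_2)$ from \cite[Theorem 1]{bai2019maximal}; with your constant-coefficient formula this is a one-line evaluation rather than a maximization over $\varphi$, and since $t_2\le c_2/2$ while $\Sigma=\arctan(\sinh(s_2/2))$ is bounded below via (\ref{for_s2}), the resulting $\frac12\log K$ comes out comfortably below $1.6450$ (in fact your construction gives a sharper constant, which of course still proves the stated inequality). So your route buys an explicit, non-numerical dilatation bound at the price of a slightly less geometrically "canonical" collar map; the paper's route needs the numerical maximization but requires no conformal coordinate change.
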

		\begin{proof}
				We prove this proposition by constructing the homeomorphism $h$ and calculate its dilatation on the largest colar of $\gamma_k$. 

				We let $C_k$ be the collar of $\gamma_k$ with the width $s_2/$, where $s_2$ is the seam length of the $n$-holed spheres as in Lemma \ref{lem_hyp_dist}. The homeomorphism $h$ on $C_k$ is described in the Figure \ref{fig_homeo_1}. A geodesic $l$ orthogonal to the core curve $\gamma_k$ is always mapped to a geodesic $l'$. The line $l$ is required to intersect $l'$ at a point $p$ on $\gamma_k$. 
				The projection of one of the end points of $l'$ (denoted $p'$) is required to has distance $t_2/2$ to $p$. 

				\begin{figure}[htbp]
						\centering
\begin{minipage}[t]{0.49\textwidth}
						\includegraphics{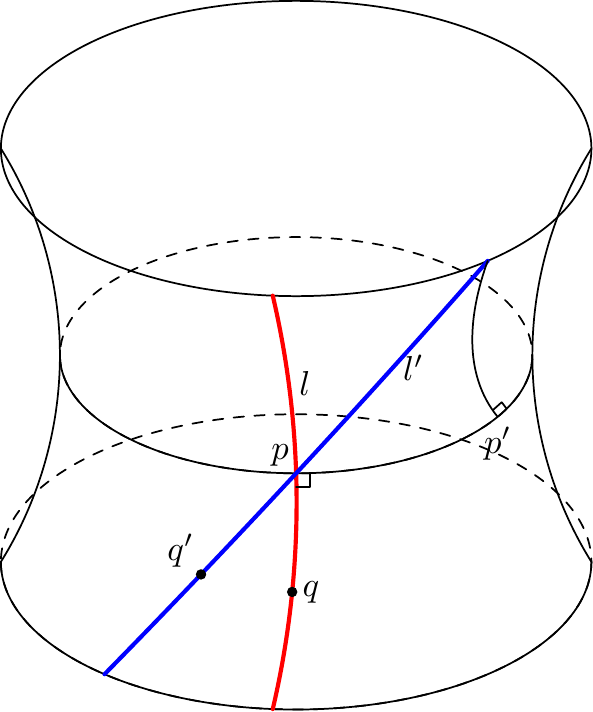}
						\caption{The homeomorphism $h|_{C_k}$.}
						\label{fig_homeo_1}
\end{minipage}
\begin{minipage}[t]{0.49\textwidth}
						\centering
						\includegraphics{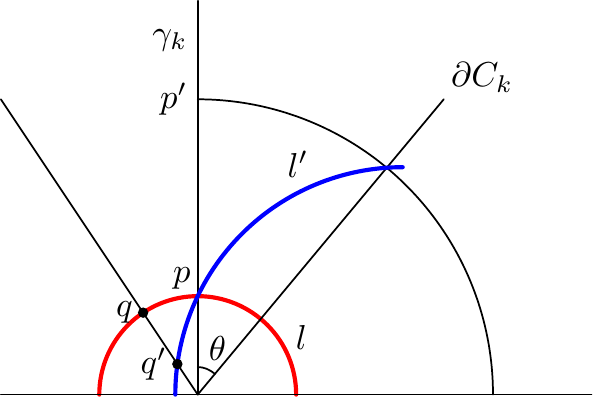}
						\caption{The lift of ${C_k}$ to $\mathbb{H}^2$.}
						\label{fig_homeo_2}
\end{minipage}
				\end{figure}

				We let $h$ outside the collars be an isometry on this surface of $S_g(c_2,0)$, then the homeomorphism $h$ maps $S_g(c_2,0)$ to $S_g(c_2,t_2)$ by the construction on the collars. 

				The rest of the proof is on the calculation of $K(h)$ on the collar $C_k$. 
				To calculate this dilatation, we lift $C_k$ on the upper half plane $\mathbb H^2$ (Figure \ref{fig_homeo_2}). 

				We lift $\gamma_k$ to the $y$-axis, assuming $p=i$ and $p'=ie^{t_2/2}$. The collar of $\gamma_k$ with width is lifted to a strip $\left\{ re^{i\varphi}\in \mathbb H^2| -\theta+\pi/2< \varphi<\theta+\pi/2 \right\}$, where 
				\begin{equation}
						\cos \theta = \frac{1}{\cosh \frac{s_2}{2}}. 
						\label{for_width}
				\end{equation}
				In this strip, $l$ is the unit circle and $l'$ is the geodesic connecting $i$ and $\exp(t_2/2+i\sin\theta)$. 

				The homeomorphism $h$ can be expressed in the form 
				\[
						h(re^{i\varphi}) = r\Phi(\varphi)e^{i\varphi}.
				\]
				When $r=1$, $h$ maps $l$ to $l'$ in Figure \ref{fig_homeo_2}. By this requirement we can calculate that 
				\begin{equation}
						\Phi(\varphi) = \sinh \left( \frac{t_2}{2} \right) \frac{\cos \varphi}{\sin \theta} + \sqrt{\sinh^2 \left( \frac{t_2}{2} \right) \frac{\cos^2 \varphi}{\sin^2 \theta}+1}. 
						\label{for_phi}
				\end{equation}
				The dilatation $K(h)$ is given by 
				\begin{equation}
						K(h) = \frac{|h_z|+|h_{\bar{z}}|}{|h_z|-|h_{\bar{z}}|} = \frac{\sqrt{\Phi^2+\frac{1}{4}\Phi'^2}+\frac{1}{2}|\Phi'|}{\sqrt{\Phi^2+\frac{1}{4}\Phi'^2}-\frac{1}{2}|\Phi'|}.
						\label{for_beltremi}
				\end{equation}
				Here $z=re^{i\varphi}$ and $\bar{z} = re^{-i\varphi}$. 

				Combining (\ref{for_beltremi}), (\ref{for_phi}) (\ref{for_width}) (\ref{for_s2}) and the formula for $(c_2,t_2)$ in \cite[Theorem 1]{bai2019maximal}, we obtain the distance $d_{\mathcal T}(S_g(c_2,0),S_g(c_2,t_2))\le\frac{1}{2}\log K(h)\le 1.6450$ (see Figure \ref{fig_dist_212}). 

				\begin{figure}[htbp]
						\centering
						\includegraphics[scale=0.5]{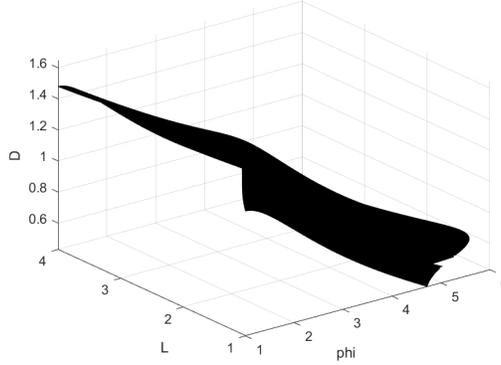}
						\caption{The dilation of $h$. Here $D$ is $K(h)$, $phi$ is the variable $\varphi$ and $L$ is $4\cos^2 \frac{\pi}{g+1}$. }
						\label{fig_dist_212}
				\end{figure}
		\end{proof}

		Hence by Proposition \ref{prop_dist_1} and \ref{prop_dist_2}, we have 
		\begin{theorem}
				For any $g\ge2$, 
				\[
						d_{\mathcal T}(S_g^1,S_g^2)\le 2.3. 
				\]
				\label{thm_dist_small}
		\end{theorem}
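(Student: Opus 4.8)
The plan is to prove the estimate by inserting an intermediate point on a path from $S_g^1$ to $S_g^2$ and applying the triangle inequality for the Teichm\"uller metric. In the notation of Section~\ref{sec_construct}, write $S_g^1 = S_g(c_1,0)$ and $S_g^2 = S_g(c_2,t_2)$; the natural intermediate surface is $S_g(c_2,0)$, which lies on the same one-parameter family $\{S_g(c,0):c>0\}$ as $S_g^1$ and is related to $S_g^2$ by the simultaneous Fenchel--Nielsen twist of time $t_2$ along the cuffs $\{\gamma_k\}_{k=1}^{g+1}$. Then
\[
 d_{\mathcal T}(S_g^1, S_g^2) \le d_{\mathcal T}\bigl(S_g^1, S_g(c_2,0)\bigr) + d_{\mathcal T}\bigl(S_g(c_2,0), S_g^2\bigr),
\]
and it suffices to bound the two terms on the right by constants independent of $g$.

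For the first term I would use Lemma~\ref{lem_symm}, which identifies $\{S_g(c,0):c>0\}$ with the Teichm\"uller geodesic generated by the Jenkins--Strebel differential $q$ associated to $(\gamma_k,b)$, so that $d_{\mathcal T}(S_g^1,S_g(c_2,0))$ equals half the logarithm of the ratio of extremal lengths of $\gamma_k$ in the corresponding characteristic ring domains via~(\ref{for_dist_extremal}). Feeding in Maskit's comparison (Theorem~\ref{thm_maskit}) together with the closed formulas for $c_1$ from \cite{anderson2011small}, for $c_2$ from \cite{bai2019maximal}, and for the seam length $s_2$, one gets the bound $0.65$; this is exactly Proposition~\ref{prop_dist_1}. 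For the second term I would bound $d_{\mathcal T}(S_g(c_2,0),S_g^2)$ by the dilatation of an explicit homeomorphism $h$ that is an isometry outside the maximal collars $C_k$ of the $\gamma_k$ and implements the twist inside each $C_k$; lifting to $\mathbb{H}^2$, writing $h(re^{i\varphi}) = r\Phi(\varphi)e^{i\varphi}$ with $\Phi$ as in~(\ref{for_phi}) and computing $K(h)$ from~(\ref{for_beltremi}), a numerical check gives $\tfrac12\log K(h) \le 1.6450$; this is Proposition~\ref{prop_dist_2}.

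Combining the two propositions yields $d_{\mathcal T}(S_g^1,S_g^2) \le 0.65 + 1.6450 = 2.295 \le 2.3$ for every $g\ge 2$, which is the claim. The substantive work sits inside the two propositions rather than in the theorem itself; the only point requiring care at the level of the theorem is uniformity in $g$, which holds because the relevant parameters $c_2$ and $t_2$ remain bounded as $g\to\infty$, so the numerical sum is genuinely constant. The main obstacle, such as it is, is hidden in Proposition~\ref{prop_dist_2}: one must verify that the prescribed $h$ really extends to a homeomorphism of the whole surface and that the supremum of the dilatation over $\varphi\in(\pi/2-\theta,\pi/2+\theta)$ and over all $g$ stays below the stated constant, which is the content of the estimate behind Figure~\ref{fig_dist_212}.
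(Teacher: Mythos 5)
Your proposal follows the paper's argument exactly: the intermediate surface $S_g(c_2,0)$, the bound $d_{\mathcal T}(S_g^1,S_g(c_2,0))\le 0.65$ via the Teichm\"uller geodesic of Lemma \ref{lem_symm}, Maskit's comparison, and (\ref{for_dist_extremal}) (Proposition \ref{prop_dist_1}), the bound $d_{\mathcal T}(S_g(c_2,0),S_g^2)\le 1.6450$ via the explicit twist homeomorphism $h$ and (\ref{for_phi})--(\ref{for_beltremi}) (Proposition \ref{prop_dist_2}), and the triangle inequality giving $0.65+1.6450\le 2.3$. This is the same decomposition and the same key estimates as in Section \ref{sec_small}, so the proposal is correct and essentially identical to the paper's proof.
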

		\section{Large distance}
		\label{sec_large}

		\subsection{The $S_g^3$ surface}
		We take the $X(\Gamma)$ surface in \cite{fortier2020local} when $n=2$ as the surface $S_g^3$. We briefly describe this surface for completeness. 

		We consider the $4$-holed sphere admitting the order $4$-rotation. We pick infinitely many copies of the $4$-holled sphere $\left\{ P_k \right\}_{k=-\infty}^{+\infty}$ and glue them together into a surface $S_{\infty}$ with infinite genus as shown in Figure \ref{fig_line}. 
		\begin{figure}[htbp]
				\centering
				\includegraphics{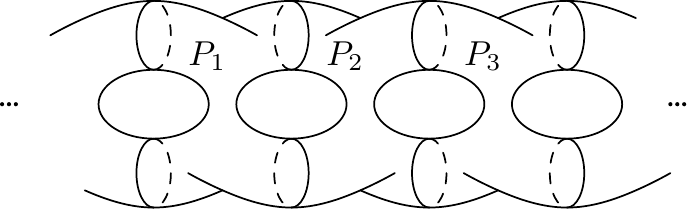}
				\caption{The surface $S_{\infty}$}
				\label{fig_line}
		\end{figure}

		The surface $S_{\infty}$ admits an isometric action $\psi:S_{\infty} \to S_{\infty} $, whic takes every $P_k$ to $P_{k+1}$. The surface $S^3_{g}$ is the quotient $S_{\infty}/\langle \psi^{g-1}\rangle$. When $g\ge 13$, $S_g^3$ is a local maximal point of the systole function.

		\subsection{The distance between $S_g^1$ and $S_g^3$}
		This istance is obtained from diameter comparison. Diameter of $S_g^3$ is comparable with $g$ while diameter of $S_g^1$ is comparable with $\log g$. Then distance between these two surfaces is comparable with $\log g$ by the method in the proof of \cite[Lemma 5.1]{rafi2013diameter}. 

		\begin{proposition}
				For the diameter of the surface $S_g^3$, we have 
				\[
						\diam (S_g^3)\ge 
						0.6\left\lfloor \frac{g-5}{2}\right\rfloor. 
				\]
				\label{prop_diam_large}
		\end{proposition}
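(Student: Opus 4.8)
The plan is to bound the diameter below by exhibiting two points of $S_g^3$ at hyperbolic distance at least $0.6\lfloor (g-5)/2\rfloor$. The geometric input is the ``necklace'' structure of the surface: by construction $S_g^3=S_\infty/\langle\psi^{g-1}\rangle$, where $S_\infty$ is a fixed surface tiled by copies of a fixed order-$4$ symmetric four-holed sphere $P$ glued in a line (Figure \ref{fig_line}) and $\psi$ is a fixed isometry shifting the tiling by one step. Hence $S_g^3$ is the union of $g-1$ isometric copies $P_1,\dots,P_{g-1}$ of $P$, glued cyclically so that $P_i$ is adjacent to $P_{i-1}$ and $P_{i+1}$ (indices modulo $g-1$) along boundary geodesics; cutting $S_g^3$ along this system $\{\delta_j\}$ of boundary geodesics returns the $P_i$. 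Since $P$ and $\psi$ do not depend on $g$, every metric quantity attached to a single block is a universal constant.

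First I would isolate the block-width estimate: there is a constant $w_0>0$, independent of $g$, such that any path inside $P$ joining a boundary geodesic on the ``$P_{i-1}$ side'' to a boundary geodesic on the ``$P_{i+1}$ side'' has hyperbolic length at least $w_0$, and $w_0\ge 0.6$. This is a finite hyperbolic-trigonometry computation in the right-angled polygon underlying $P$: decomposing $P$ along its seams and applying the trirectangle, pentagon and hexagon identities (\ref{for_trirect_1}), (\ref{for_pentagon}), (\ref{for_hexagon}) bounds from below the distance between the relevant pairs of boundary curves, for the fixed parameters of $P$.

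Then I would pick the endpoints and count. Place $p$ in $P_1$ and $q$ in $P_m$ with $m=\lceil g/2\rceil$. Any path $\eta$ from $p$ to $q$ meets the cut geodesics $\{\delta_j\}$ in a sequence that follows one of the two arcs of the cyclic chain from $P_1$ to $P_m$; along whichever arc it follows, $\eta$ must cross ``in the long direction'' at least $\lfloor (g-5)/2\rfloor$ of the blocks (the constant $5$ rather than $1$ absorbs the two blocks containing $p$ and $q$, which may be traversed only partially, together with a parity correction). For each such fully crossed block $P_i$, the sub-arc of $\eta$ in $P_i$ runs from a $P_{i-1}$-side geodesic to a $P_{i+1}$-side geodesic, hence has length $\ge w_0$ by the width estimate; since the $\delta_j$ are pairwise disjoint these sub-arcs are disjoint, so $\mathrm{length}(\eta)\ge w_0\lfloor(g-5)/2\rfloor\ge 0.6\lfloor(g-5)/2\rfloor$. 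Taking the infimum over $\eta$ gives $d(p,q)\ge 0.6\lfloor(g-5)/2\rfloor$, and the proposition follows.

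The main obstacle is making the additivity of lengths rigorous: I must rule out a shortest path ``cheating'' by repeatedly dipping in and out of blocks on the same side, or by running along the thin collar of a short cut geodesic, and I must verify that the combinatorial count of fully crossed blocks is at least $\lfloor(g-5)/2\rfloor$ no matter which way $\eta$ winds around the necklace. This is most cleanly handled by treating $\{\delta_j\}$ as a fixed dividing system and tracking the ordered list of blocks that $\eta$ visits; once that bookkeeping is in place, the numerical constant $0.6$ drops out of the explicit geometry of the single block $P$.
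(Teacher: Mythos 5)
Your proposal follows essentially the same route as the paper's proof: decompose $S_g^3$ into its $g-1$ isometric four-holed spheres, observe that a path joining points in far-apart blocks must fully cross at least $\lfloor (g-5)/2\rfloor$ of them (however it winds around the necklace), and bound each full crossing from below by the fixed, $g$-independent distance between cuffs on opposite sides of a block, obtained by hyperbolic trigonometry in the right-angled octagon. The only ingredient you leave implicit is the numerical verification that this width is at least $0.6$: the paper identifies it with the seam length $d$ of the four-holed sphere and computes it from the trirectangle identity $\sinh\frac{c}{4}\sinh\frac{d}{2}=\cos\frac{\pi}{4}$ together with the cuff length $c\approx 6.980$ taken from \cite[Lemma 2.5]{fortier2020local}.
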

		\begin{proof}
				By the construction, the surface $S_g^3$ consists of $g-1$ four-holed spheres $P_k$, $k=1,2,\dots,g-1$. 

				When $g\ge 5$, for any $x\in P_k$ and $y\in P_{k+2}$ for some $k$, a curve connecting $x$ and $y$ must pass at least one of the four-holed spheres except $P_k$ and $P_{k+2}$. Without loss of generality, we assume this curve passes $P_{k+1}$, then this curve, if given an orientation, enters $P_{k+1}$ at one cuff and leaves $P_{k+1}$ at another cuff. Therefore, 
				$d(x,y)$ is bounded from below by the distance between neighboring cuffs of $P_{k+1}$. We denote this distance by $d$. Then inductively, when $k\le(g-1)/2$, distance between $x\in P_1$ and $y\in P_k$ is at least $d\lfloor\frac{g-1}{2} -2 \rfloor$. Hence 
				\[
						\diam(S_g^3)\ge\left \lfloor \frac{g-1}{2} - 2\right \rfloor.
				\]

				The rest of this proof is to calculate $d$. The distance $d$ is actually the seam length of the four-holed spheres. 
				The seam length $d$ is determined by the cuff length (denoted $c$) of the four-holed sphere by (\ref{for_seam}). In Figure \ref{fig_octagon}, one of the two octagons forming the four-holed sphere, we have
				\begin{equation}
						\sinh |AB|\sinh |BC| = \cos \angle O \text{ namely } \sinh \frac{c}{4}\sinh\frac{d}{2} = \cos \frac{\pi}{4}. 
						\label{for_seam}
				\end{equation}
			According to \cite[Lemma 2.5]{fortier2020local}, the cuff length of the four-holed spheres is approximately $6.980$. Then by (\ref{for_seam}), this proposition holds. 
		\end{proof}
			\begin{figure}[htbp]
					\begin{minipage}[t]{0.49\textwidth}
							\centering
							\includegraphics{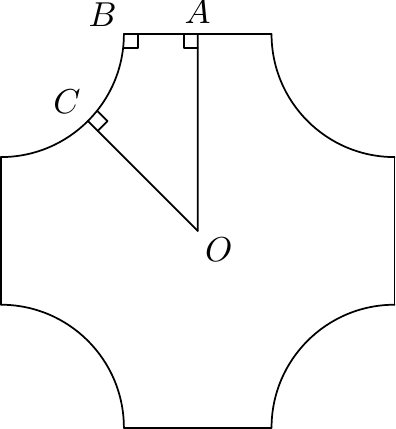}
							\caption{The right-angled octagon}
							\label{fig_octagon}
					\end{minipage}
					\begin{minipage}[t]{0.49\textwidth}
						\centering
						\includegraphics{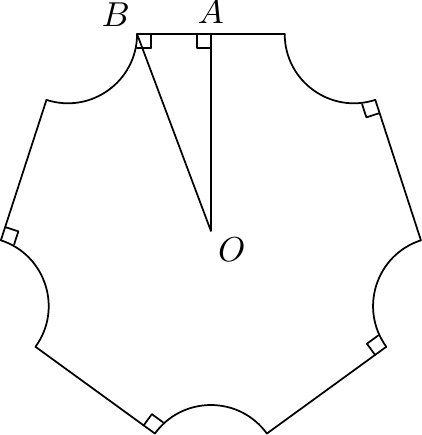}
						\caption{The polygon $Q$}
						\label{fig_octagon_2}
					\end{minipage}
				\end{figure}

		For the surface $S_g^1$ we have 
		\begin{proposition}
				The diameter of the surface $S_g^1$ satisfies 
				\[
						\diam(S_g^1)< 4\log\left( \frac{4g+4}{\pi} \right).
				\]
				\label{prop_diam_small}
		\end{proposition}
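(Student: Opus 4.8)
The plan is to read the diameter off the explicit polygonal model of $S_g^1=S_g(c_1,0)$ built in Section~\ref{sec_construct}. Write $n=g+1$. Then $S_g^1$ is glued from four isometric copies $\Pi_1,\dots,\Pi_4$ of a right-angled hyperbolic $2n$-gon: $\Pi_1,\Pi_2$ form one of the two $n$-holed spheres (glued along its $n$ seams), $\Pi_3,\Pi_4$ form the other, and the two $n$-holed spheres are glued along their $n$ cuffs. The first point to record is that on $S_g^1$ the identity $l(\alpha_k)=l(\gamma_k)$ that defines the surface forces the seam length to equal $c_1/2$, which is also the length of a half-cuff; hence all $2n$ edges of each $\Pi_j$ have length $c_1/2$, so each $\Pi_j$ is a \emph{regular} right-angled $2n$-gon. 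In particular $\Pi_j$ is convex, is invariant under the order-$n$ rotation with a center of symmetry $O_j$, and has all of its vertices at a common distance $R$ (the circumradius) from $O_j$; moreover, because the seam-edges and half-cuff-edges alternate around each $\Pi_j$, any two of the four polygons have a common vertex of $S_g^1$.

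The next step is to compute $R$ by hyperbolic trigonometry. Cutting the regular right-angled $2n$-gon into $2n$ congruent isoceles triangles with apex angle $\pi/n$ at $O_j$ and base angles $\pi/4$, the law of cosines gives $\cosh(c_1/2)=1+2\cos(\pi/n)$, in agreement with $c_1=4\arcsinh\sqrt{\cos(\pi/n)}$, and the law of sines gives
\[
\sinh R=\frac{\sqrt{\cos(\pi/n)}}{\sin(\pi/(2n))}.
\]
One then checks that $R<\log\frac{4n}{\pi}$. Since $\sinh\!\bigl(\log\frac{4n}{\pi}\bigr)=\frac{2n}{\pi}-\frac{\pi}{8n}$, and putting $x=\pi/(2n)\in(0,\pi/6]$, this amounts to the elementary inequality $16x^{2}\cos 2x<(4-x^{2})^{2}\sin^{2}x$ on that interval, which follows from a Taylor expansion at $x=0$ together with a direct check on the compact interval.

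Finally, a short covering argument finishes the proof. Since each $\Pi_j$ is convex, the hyperbolic distance from the interior point $O_j$ is maximized at a vertex, so every point of $\Pi_j$ lies within $R$ of $O_j$; and for any $i,j$, choosing a vertex $V$ common to $\Pi_i$ and $\Pi_j$ gives $d(O_i,O_j)\le d(O_i,V)+d(V,O_j)=R+R=2R$. Hence, for $x\in\Pi_i$ and $y\in\Pi_j$,
\[
d(x,y)\le d(x,O_i)+d(O_i,O_j)+d(O_j,y)\le R+2R+R=4R<4\log\frac{4n}{\pi}=4\log\frac{4g+4}{\pi}.
\]
The step I expect to be the main obstacle is the middle one: because $\sinh R$ and $\sinh\bigl(\log\frac{4n}{\pi}\bigr)$ are asymptotically equal as $g\to\infty$, the bound $R<\log\frac{4n}{\pi}$ is essentially tight, so the elementary inequality must be handled with some care (in particular one cannot afford to discard the factor $\sqrt{\cos(\pi/n)}\le 1$). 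By contrast, the polygonal combinatorics, the convexity input, and the two-hop path through the centers are all routine.
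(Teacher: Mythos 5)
Your proposal is correct and follows essentially the same route as the paper: decompose $S_g^1$ into its four regular right-angled $(2g+2)$-gons, bound the diameter by four times the circumradius $R$ via paths through shared points, and estimate $R$ by hyperbolic trigonometry. The only real difference is the last step: the paper gets $R$ from the right-triangle relation $\cosh R=\cot\frac{\pi}{4}\cot\frac{\pi}{2g+2}<\frac{2g+2}{\pi}$ and then uses $\arccosh y<\log(2y)$, which replaces the delicate $\sinh$-comparison you flag as the main obstacle (the inequality $16x^{2}\cos 2x<(4-x^{2})^{2}\sin^{2}x$ is indeed true on $(0,\pi/6]$, but the paper's two one-line estimates make it unnecessary).
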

		\begin{proof}
				Recall that the surface $S_g^1$  consists of two $g+1$-holed spheres and each of the $g+1$-holed sphere consists of two right-angled regular ($2g+2$)-gons. For any $x,y\in S_g^1$, 
				for the two (possibly coicide) regular ($2g+2$)-polygons containing $x$ and $y$, there is a curve connecting $x$ and $y$, contained in the union of these two polygons (see Figure \ref{fig_diam}). Therefore, if we denote one of the four regular $2g+2$ polygons by $Q$, then 
				\begin{figure}[htbp]
						\centering
						\includegraphics{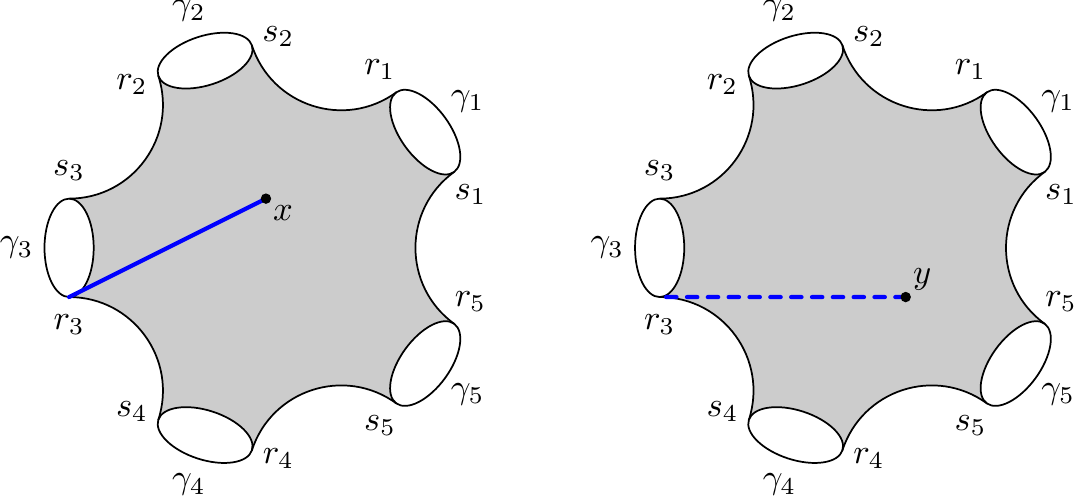}
						\caption{The path between $x$ and $y$}
						\label{fig_diam}
				\end{figure}
				\[
						\diam (S_g^1)\le 2\diam(Q), 
				\]

				The diameter of $Q$ is realized by $2|OB|$ in Figure \ref{fig_octagon_2}. 
				In the triangle $\triangle OAB$, by (\ref{for_tri_2}), 
				\[
						\cosh |OB| = \cot \angle O \cot \angle B
				\]
				namely
				\[
						\cosh |OB| = \cot \frac{\pi}{4} \cot \frac{\pi}{2g+2} = \cot \frac{\pi}{2g+2} < \frac{2g+2}{\pi}. 
				\]
				Therefore
\[
						\diam(S_g^1) \le 2\diam(Q) 
						\le 4 |OB|\\
						< 4\arccosh \left( \frac{2g+2}{\pi} \right)
						< 4\log\left( \frac{4g+4}{\pi} \right). 
\]
		\end{proof}

		Now we are ready to prove 
		\begin{theorem}
				When $g\ge 13$, 
				\[
						d_{\mathcal{T}}(S_g^1,S_g^3) > \frac{1}{2} \log(g-6) -\frac{1}{2}\log \left( \frac{40}{3}\log\left( \frac{4g+4}{\pi} \right) \right).
				\]
				\label{thm_dist_large}
		\end{theorem}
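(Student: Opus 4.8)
The plan is to follow the diameter-comparison argument of Rafi--Tao \cite[Lemma 5.1]{rafi2013diameter} and feed in the two diameter bounds already established in Propositions \ref{prop_diam_large} and \ref{prop_diam_small}.

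First I would record the elementary distortion estimate behind that method. If $f:X\to Y$ is a Lipschitz homeomorphism between closed hyperbolic surfaces with Lipschitz constant $L(f)$, then for all $p,q\in X$ we have $d_Y(f(p),f(q))\le L(f)\,d_X(p,q)\le L(f)\diam(X)$; since $f$ is onto, taking the supremum over $p,q$ gives $\diam(Y)\le L(f)\diam(X)$, hence $\log L(f)\ge\log\frac{\diam(Y)}{\diam(X)}$. Taking the infimum over all such $f$ yields $d_L(X,Y)\ge\log\frac{\diam(Y)}{\diam(X)}$ for Thurston's asymmetric metric $d_L$. Combining this with Rafi--Tao's inequality (\ref{for_thurston_metric}) applied to the ordered pair $(S_g^1,S_g^3)$, namely $d_{\mathcal T}(S_g^1,S_g^3)\ge\tfrac12 d_L(S_g^1,S_g^3)$, gives
\[
d_{\mathcal T}(S_g^1,S_g^3)\ \ge\ \tfrac12\log\frac{\diam(S_g^3)}{\diam(S_g^1)}.
\]
(The reason to place $S_g^3$ in the numerator is that its diameter is the larger of the two, growing linearly in $g$ versus only logarithmically for $S_g^1$.)

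Next I would substitute $\diam(S_g^3)\ge 0.6\lfloor\frac{g-5}{2}\rfloor$ from Proposition \ref{prop_diam_large} and $\diam(S_g^1)<4\log\frac{4g+4}{\pi}$ from Proposition \ref{prop_diam_small}, and finish with a short computation. For every integer $g$ one has $\lfloor\frac{g-5}{2}\rfloor\ge\frac{g-6}{2}$ (checking the two parities of $g$ separately), so $0.6\lfloor\frac{g-5}{2}\rfloor\ge 0.3(g-6)$; therefore, using $0.3/4=3/40$,
\[
d_{\mathcal T}(S_g^1,S_g^3)\ >\ \tfrac12\log\frac{0.3(g-6)}{4\log\frac{4g+4}{\pi}}\ =\ \tfrac12\log(g-6)-\tfrac12\log\!\left(\tfrac{40}{3}\log\tfrac{4g+4}{\pi}\right),
\]
which is the assertion. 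The constraint $g\ge 13$ is exactly what Proposition \ref{prop_diam_large} requires (it is where \cite[Lemma 2.5]{fortier2020local} supplies the cuff length $\approx 6.980$ of the four-holed spheres), and it also ensures $g-6>0$ so the logarithm makes sense; for the small values of $g$ where the right-hand side happens to be negative the inequality is vacuous since $d_{\mathcal T}\ge 0$.

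Since the real work is in the two diameter estimates, which are already carried out, there is essentially no obstacle left in this last step. The only points requiring care are purely arithmetic: getting the floor inequality $\lfloor\frac{g-5}{2}\rfloor\ge\frac{g-6}{2}$ right for both parities of $g$, tracking the constant $3/40$ through the logarithm, and orienting the distortion inequality so that the larger diameter $\diam(S_g^3)$ appears in the numerator.
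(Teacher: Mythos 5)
Your proposal is correct and follows essentially the same route as the paper: the Rafi--Tao diameter-comparison argument, combining $d_{\mathcal T}\ge\tfrac12 d_L$ with the two diameter bounds of Propositions \ref{prop_diam_large} and \ref{prop_diam_small} and the same arithmetic $0.6\lfloor\frac{g-5}{2}\rfloor\ge 0.3(g-6)$, $0.3/4=3/40$. The only cosmetic difference is that you derive $d_L\ge\log\frac{\diam(S_g^3)}{\diam(S_g^1)}$ by taking the infimum over all Lipschitz homeomorphisms, whereas the paper works with an extremal map realizing $d_L$ and a near-diametral pair of points; both are sound.
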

		\begin{proof}
				The proof here is similar to the proof of \cite[Lemma 5.1]{rafi2013diameter}. 

				We let $f:S_g^1\to S_g^3$ be a Lipschitz homeomorphism with $L(f)= d_L(S_g^1, S_g^3)$ (The existence of this homeomorphism is verified in \cite{thurston1998minimal}. ). By Proposition \ref{prop_diam_large}, we pick $x,y\in S_g^3$ with $d(x,y)\ge 0.6\left\lfloor \frac{g-5}{2}\right\rfloor$. By Proposition \ref{prop_diam_small}, $d(f^{-1}(x),f^{-1}(y))< 4\log\left( \frac{4g+4}{\pi} \right)$. Then
				\[
						L(f) \ge \frac{d(x,y)}{d(f^{-1}(x),f^{-1}(y))} > \frac{0.6\left\lfloor \frac{g-5}{2}\right\rfloor}{4\log\left( \frac{4g+4}{\pi} \right)}> \frac{3 (g-6)}{40\log\left( \frac{4g+4}{\pi} \right)}. 
				\]
				Hence 
				\[
						d_L(S_g^1,S_g^3) = \log L(f) > \log(g-6) -\log \left( \frac{40}{3}\log\left( \frac{4g+4}{\pi} \right) \right). 
				\]
				By (\ref{for_thurston_metric}), 
				\[
						d_{\mathcal{T}}(S_g^1,S_g^3)\ge \frac{1}{2}d_L(\Sigma_g^1,\Sigma_g^3) > \frac{1}{2} \log(g-6) -\frac{1}{2}\log \left( \frac{40}{3}\log\left( \frac{4g+4}{\pi} \right) \right).
				\]
		\end{proof}

 \bibliographystyle{alpha}
 \bibliography{systole, systole_euc, systole_dist}   % name my BibTeX data base

\end{document}